\numberwithin{equation}{section}
\numberwithin{figure}{section}
\theoremstyle{definition}
\newtheorem{defn}{\protect\definitionname}
\theoremstyle{remark}
\newtheorem*{rem*}{\protect\remarkname}
\theoremstyle{plain}
\newtheorem{thm}{\protect\theoremname}
\theoremstyle{plain}
\newtheorem{lem}{\protect\lemmaname}
\theoremstyle{plain}
\newtheorem{assumption}{\protect\assumptionname}
\theoremstyle{plain}
\newtheorem{prop}{\protect\propositionname}
\setlist{nosep}
\providecommand{\assumptionname}{Assumption}
\providecommand{\definitionname}{Definition}
\providecommand{\lemmaname}{Lemma}
\providecommand{\propositionname}{Proposition}
\providecommand{\remarkname}{Remark}
\providecommand{\theoremname}{Theorem}
\begin{document}
\newcommand{\R}{\mathbb{R}}
\title{Global Optimal Regularity for Variational Problems with Nonsmooth
Non-strictly Convex Gradient Constraints}
\author{Mohammad Safdari$\,{}^{1}$}
\begin{abstract}
We prove the optimal $W^{2,\infty}$ regularity for variational problems
with convex gradient constraints. We do not assume any regularity
of the constraints; so the constraints can be nonsmooth, and they
need not be strictly convex. When the domain is smooth enough, we
show that the optimal regularity holds up to the boundary. In this
process, we also characterize the set of singular points of the viscosity
solutions to some Hamilton-Jacobi equations. Furthermore, we obtain
an explicit formula for the second derivative of these viscosity solutions;
and we show that the second derivatives satisfy a monotonicity property.\medskip{}

\noindent \textsc{Mathematics Subject Classification.} 35R35, 35J87,
35B65,  49N60.\thanks{$^{1}\;$Department of Mathematical Sciences, Sharif University of
Technology, Tehran, Iran\protect \\
Email address: safdari@sharif.edu}
\end{abstract}

\maketitle
\tableofcontents{}

\section{Introduction}

Variational problems and differential equations with gradient constraints,
has been an active area of study, and has seen many progresses. An
important example among them is the famous elastic-plastic torsion
problem, which is the problem of minimizing the functional 
\begin{equation}
\int_{U}\frac{1}{2}|Dv|^{2}-v\,dx\label{eq: elast-plast functl}
\end{equation}
over the set 
\[
W_{B_{1}}:=\{v\in H_{0}^{1}(U):|Dv|\le1\textrm{ a.e.}\}.
\]
Here $U$ is a bounded open set in $\mathbb{R}^{n}$. (In the physical
problem $n=2$.) This problem is equivalent to finding $u\in W_{B_{1}}$
that satisfies the variational inequality 
\[
\int_{U}Du\cdot D(v-u)-(v-u)\,dx\ge0\qquad\textrm{ for every }v\in W_{B_{1}}.
\]

\citet{MR0239302} proved the $W^{2,p}$ regularity for the elastic-plastic
torsion problem. \citet{MR513957} obtained its optimal $W^{2,\infty}$
regularity. \citet{MR0385296} proved $W^{2,p}$ regularity for the
solution of a quasilinear variational inequality subject to the same
constraint as in the elastic-plastic torsion problem. \citet{MR697646}
proved $W^{2,p}$ regularity for the solution of a linear variational
inequality subject to a $C^{2}$ strictly convex gradient constraint.
\citet{MR529814} considered linear elliptic equations with pointwise
constraints of the form $|Dv(\cdot)|\le g(\cdot)$, and proved $W^{2,p}$
regularity for them. He also obtained $W^{2,\infty}$ regularity under
some additional restrictions. Those restrictions were removed by \citet{MR607553},
and some extended results were obtained by \citet{MR693645}. \citet{MR1310935,MR1315349}
proved $C^{1,\alpha}$ regularity for the solution to a quasilinear
variational inequality subject to a $C^{2}$ strictly convex gradient
constraint, and allowed the operator to be degenerate of the $p$-Laplacian
type. The paper \citep{MR1315349} is also the only work here that
is concerned with functionals with non-quadratic $p$-growth. In \citep{MR1},
following the approach of \citep{MR513957}, we obtained $W^{2,\infty}$
regularity for the minimizers of the functional (\ref{eq: elast-plast functl})
subject to gradient constraints satisfying some mild regularity.

Recently, there has been new interest in these types of problems.
\citet{Hynd} studied fully nonlinear equations with strictly convex
gradient constraints, which appear in singular stochastic control.
They obtained $W^{2,p}$ regularity in general, and $W^{2,\infty}$
regularity with some extra assumptions. \citet{Hynd-2012,Hynd2017}
also considered eigenvalue problems for equations with gradient constraints.
By using infinite dimensional duality, \citet{giuffre2015lagrange}
studied the Lagrange multipliers of quasilinear variational inequalities
subject to the same constraint as in the elastic-plastic torsion problem.
\citet{MR2605868} investigated the minimizers of some  functionals
subject to gradient constraints, arising in the study of random surfaces.
In their work, the functional is allowed to have certain kinds of
singularities, and there is no particular assumption about its growth
condition. Also, the constraints are given by convex polygons; so
they are not strictly convex. They showed that in two dimensions,
the minimizer is $C^{1}$ away from the obstacles. (Under mild conditions,
a variational problem with gradient constraint is equivalent to a
double obstacle problem. For the details see Appendix \ref{sec: Local-Optimal-Reg}.)

In this paper, we obtain the optimal $W^{2,\infty}$ regularity for
the minimizers of a large class of functionals subject to arbitrary
convex gradient constraints. We have only considered functionals with
quadratic growth here. However, we do not assume any regularity of
the constraints; so the constraints can be nonsmooth, and they need
not be strictly convex. We also show that the optimal regularity holds
up to the boundary, when the domain is smooth enough. Although our
functionals are smooth, we hope that our study sheds some new light
on the above-mentioned problem about random surfaces. Also, in \citep{SAFDARI2021358},
we have used the tools developed in this paper to obtain $W^{2,\infty}$
regularity for fully nonlinear elliptic equations with non-strictly
convex gradient constraints. These types of equations emerge in the
study of some singular stochastic control problems appearing in financial
models with transaction costs; see for example \citep{barles1998option,possamai2015homogenization}.

In addition to the works on the regularity of the elastic-plastic
torsion problem, \citet{MR0412940,MR0521411}, \citet{MR534111},
\citet{MR552267}, and \citet{MR563207}, have worked on the regularity
and the shape of its free boundary, i.e. the boundary of the set $\{|Du|<1\}$.
These works can also be found in \citep{MR679313}. In \citep{Safdari20151,safdari2017shape},
we extended some of these results to the more general case where the
functional is unchanged, but the constraint is given by the $p$-norm
\[
(|D_{1}v|^{p}+|D_{2}v|^{p})^{\frac{1}{p}}\le1.
\]
Similarly to \citep{Safdari20151}, our results in this paper can
be applied to imply the regularity of the free boundary in two dimensions,
when the functional (i.e. $F,g$ in (\ref{eq: fnctnl})) is analytic.
But the more general case requires extra analysis, so we leave the
question of the free boundary's regularity to future works.

Let us introduce the problem in more detail. Let $K$ be a compact
convex subset of $\mathbb{R}^{n}$ whose interior contains the origin.
We recall from convex analysis (see \citep{MR3155183}) that the \textbf{gauge}
function of $K$ is the convex function 
\begin{equation}
\gamma_{K}(x):=\inf\{\lambda>0:x\in\lambda K\}.\label{eq: gaug}
\end{equation}
The gauge function $\gamma_{K}$ is subadditive and positively 1-homogeneous,
so it looks like a norm on $\mathbb{R}^{n}$, except that $\gamma_{K}(-x)$
is not necessarily the same as $\gamma_{K}(x)$. Note that as $K$
is closed, $K=\{\gamma_{K}\le1\}$; and as $K$ has nonempty interior,
$\partial K=\{\gamma_{K}=1\}$.

Another notion is that of the \textbf{polar} of $K$ 
\begin{equation}
K^{\circ}:=\{x:\langle x,y\rangle\leq1\,\textrm{ for all }y\in K\},\label{eq: K0}
\end{equation}
where $\langle\,,\rangle$ is the standard inner product on $\mathbb{R}^{n}$.
$K^{\circ}$, too, is a compact convex set containing the origin as
an interior point.

Let $U\subset\mathbb{R}^{n}$ be a bounded open set with Lipschitz
boundary. Let $u$ be the minimizer of 
\begin{equation}
J[v]=J[v;U]:=\int_{U}F(Dv)+g(v)\,dx,\label{eq: fnctnl}
\end{equation}
over 
\begin{equation}
W_{K^{\circ},\varphi}=W_{K^{\circ},\varphi}(U):=\{v\in H^{1}(U):Dv\in K^{\circ}\textrm{ a.e., }v=\varphi\textrm{ on }\partial U\}.\label{eq: W_K}
\end{equation}
Here $\varphi:\mathbb{R}^{n}\to\mathbb{R}$ is a continuous function,
and the equality of $v,\varphi$ on $\partial U$ is in the sense
of trace. In order to ensure that $W_{K^{\circ},\varphi}$ is nonempty
we assume that 
\begin{equation}
-\gamma_{K}(y-x)\le\varphi(x)-\varphi(y)\le\gamma_{K}(x-y),\label{eq: phi Lip}
\end{equation}
for all $x,y\in\R^{n}$. Then by Lemma 2.1 of \citep{MR1797872} this
property implies that $\varphi$ is Lipschitz and $D\varphi\in K^{\circ}$
a.e.; so $\varphi\in W_{K^{\circ},\varphi}$. Also note that $Dv\in K^{\circ}$
is equivalent to $\gamma_{K^{\circ}}(Dv)\le1$. Thus $\gamma_{K^{\circ}}$
defines the gradient constraint.

We will show that (Proposition \ref{prop: equiv}) $u$ is also the
unique minimizer of $J$ over 
\begin{equation}
W_{\bar{\rho},\rho}=W_{\bar{\rho},\rho}(U):=\{v\in H^{1}(U):-\bar{\rho}\le v\leq\rho\textrm{ a.e., }v=\varphi\textrm{ on }\partial U\},\label{eq: W_rho}
\end{equation}
where 
\begin{align}
 & \rho(x)=\rho_{K,\varphi}(x;U):=\underset{y\in\partial U}{\min}[\gamma_{K}(x-y)+\varphi(y)],\nonumber \\
 & \bar{\rho}(x)=\bar{\rho}_{K,\varphi}(x;U):=\underset{y\in\partial U}{\min}[\gamma_{K}(y-x)-\varphi(y)].\label{eq: rho}
\end{align}
It is well known (see {[}\citealp{MR667669}, Section 5.3{]}) that
$\rho$ is the unique viscosity solution of the Hamilton-Jacobi equation
\begin{equation}
\begin{cases}
\gamma_{K^{\circ}}(Dv)=1 & \textrm{in }U,\\
v=\varphi & \textrm{on }\partial U.
\end{cases}\label{eq: H-J eq}
\end{equation}
Now, note that $-K$ is also a compact convex set whose interior contains
the origin. We also have $\bar{\rho}_{K,\varphi}=\rho_{-K,-\varphi}$,
since $\gamma_{-K}(\cdot)=\gamma_{K}(-\,\cdot)$. Thus we have a similar
characterization for $\bar{\rho}$ too.%

To simplify the notation, we will use the following conventions 
\[
\gamma:=\gamma_{K},\qquad\gamma^{\circ}:=\gamma_{K^{\circ}},\qquad\bar{\gamma}:=\gamma_{-K}.
\]
Thus in particular we have $\bar{\gamma}(x)=\gamma(-x)$. 
\begin{defn}
When $\rho(x)=\gamma(x-y)+\varphi(y)$ for some $y\in\partial U$,
we call $y$ a \textbf{$\boldsymbol{\rho}$-closest} point to $x$
on $\partial U$. Similarly, when $\bar{\rho}(x)=\gamma(y-x)-\varphi(y)$
for some $y\in\partial U$, we call $y$ a \textbf{$\boldsymbol{\bar{\rho}}$-closest}
point to $x$ on $\partial U$.
\end{defn}
\begin{rem*}
As shown in the proof of Proposition \ref{prop: equiv}, for $y\in\partial U$
we have $\rho(y)=\varphi(y)$ and $\bar{\rho}(y)=-\varphi(y)$. Therefore
$y$ is a $\rho$-closest point and a $\bar{\rho}$-closest point
on $\partial U$ to itself.
\end{rem*}

Next, we generalize the notion of ridge introduced by \citet{MR0184503},
and \citet{MR534111}. Intuitively, the $\rho$-ridge is the set of
singularities of $\rho$.
\begin{defn}
\label{def: ridge}The \textbf{$\boldsymbol{\rho}$-ridge} of $U$
is the set of all points $x\in U$ where $\rho(x)$ is not $C^{1,1}$
in any neighborhood of $x$. We denote it by 
\[
R_{\rho}.
\]
As shown in Lemma \ref{lem: rho not C1}, when $\gamma$ is strictly
convex and the strict Lipschitz property (\ref{eq: phi strct Lip})
for $\varphi$ holds, the points with more than one $\rho$-closest
point on $\partial U$ belong to $\rho$-ridge. This subset of the
$\rho$-ridge is denoted by 
\[
R_{\rho,0}.
\]
Similarly we define $R_{\bar{\rho}},R_{\bar{\rho},0}$.%
\end{defn}

The following definition is motivated by the physical properties of
the elastic-plastic torsion problem.
\begin{defn}
\label{def: plastic}Let 
\begin{eqnarray*}
P^{+}:=\{x\in U:u(x)=\rho(x)\}, &  & P^{-}:=\{x\in U:u(x)=-\bar{\rho}(x)\}.
\end{eqnarray*}
Then $P:=P^{+}\cup P^{-}$ is called the \textbf{plastic} region;
and 
\[
E:=\{x\in U:-\bar{\rho}(x)<u(x)<\rho(x)\}
\]
is called the \textbf{elastic} region. We also define the \textbf{free
boundary} to be $\Gamma:=\partial E\cap U$.
\end{defn}
\begin{rem*}
Suppose $u$ is locally $C^{1,1}$, which for example is true under
the assumptions of Theorem \ref{thm: Reg NonConv dom}, Theorem \ref{thm: Reg Conv dom},
or Theorem \ref{thm: Reg u}. Then similarly to (\ref{eq: diff ineq u_e}),
we can show that 
\begin{equation}
\begin{cases}
-D_{i}(D_{i}F(Du))+g'(u)=0 & \textrm{ in }E,\\
-D_{i}(D_{i}F(Du))+g'(u)\le0 & \textrm{ a.e. on }P^{+},\\
-D_{i}(D_{i}F(Du))+g'(u)\ge0 & \textrm{ a.e. on }P^{-}.
\end{cases}\label{eq: diff ineq}
\end{equation}
\end{rem*}
The paper is organized as follows. In Section \ref{sec: Notation-and-Prelim}
we introduce some notation, and we state some preliminary results.
We first show that $u$ is also the minimizer of $J$ over $W_{\bar{\rho},\rho}$,
i.e. it is the solution of a double obstacle problem. Then we review
some well-known facts about the regularity of $K$, and its relation
to the regularity of $K^{\circ},\gamma,\gamma^{\circ}$.  Then we
need to show that $u$ cannot touch the obstacles at any of their
points of singularity. To accomplish this, we have to study the function
$\rho$ more carefully. This has been done in Section \ref{sec: Reg Opstacles}.
We will compute the derivatives of $\rho$, and by using them we will
characterize the $\rho$-ridge. Results of this nature have appeared
in the literature before. For example, \citet{MR2336304} studied
this problem in the case of $\varphi=0$.  But the novelty of our
work is that we were able to find an explicit formula for $D^{2}\rho$.

Suppose $\partial U$ is at least $C^{2}$. Let $\nu$ be the inward
unit normal to $\partial U$. We will show that under appropriate
assumptions, for every $y\in\partial U$ there is a unique scalar
$\lambda(y)>0$ such that 
\[
\gamma^{\circ}\big(D\varphi(y)+\lambda(y)\nu(y)\big)=1.
\]
Then we set 
\begin{equation}
\mu(y):=D\varphi(y)+\lambda(y)\nu(y).\label{eq: mu}
\end{equation}
We also set 
\begin{equation}
X:=\frac{1}{\langle D\gamma^{\circ}(\mu),\nu\rangle}D\gamma^{\circ}(\mu)\otimes\nu,\label{eq: X}
\end{equation}
where $a\otimes b$ is the rank 1 matrix whose action on a vector
$z$ is $\langle z,b\rangle a$. Then in Proposition \ref{prop: D rho}
we will show that $\rho$ is differentiable at $x$ if and only if
$x\in U-R_{\rho,0}$. And in that case we have 
\begin{equation}
D\rho(x)=\mu(y),\label{eq: D rho (x)}
\end{equation}
where $y$ is the unique $\rho$-closest point to $x$ on $\partial U$.
\begin{rem*}
We will mostly state our results about $\gamma,\rho$, but it is obvious
that they also hold for $\bar{\gamma},\bar{\rho}$.
\end{rem*}
\begin{thm}
\label{thm: rho is C2 at y}Suppose the Assumption \ref{assu: 2}
holds. Let $y\in\partial U$. Then there is an open ball $B_{r}(y)$
such that $\rho$ is $C^{k,\alpha}$ on $\overline{U}\cap B_{r}(y)$.
Furthermore, $y$ is the $\rho$-closest point to some points in $U$,
and we have 
\begin{equation}
D\rho(y)=\mu(y).\label{eq: D rho (y)}
\end{equation}
In addition we have 
\begin{equation}
D^{2}\rho(y)=(I-X^{T})\big(D^{2}\varphi(y)+\lambda(y)D^{2}d(y)\big)(I-X),\label{eq: D2 rho (y)}
\end{equation}
where $I$ is the identity matrix, $d$ is the Euclidean distance
to $\partial U$, and $X$ is given by (\ref{eq: X}). Furthermore
we have 
\begin{equation}
D^{2}\rho(y)D\gamma^{\circ}(\mu(y))=0.\label{eq: D2 rho Dg0 =00003D 0}
\end{equation}
\end{thm}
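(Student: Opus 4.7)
The plan is to realize $\rho$ near $y$ via the method of characteristics for the Hamilton--Jacobi equation $\gamma^{\circ}(D\rho)=1$. Under Assumption~\ref{assu: 2}, the implicit function theorem shows that $\lambda(z)$, and hence $\mu(z)$, is as regular as the data on $\partial U$ near $y$; the requisite transversality $\langle D\gamma^{\circ}(\mu(y)),\nu(y)\rangle>0$ is a polar-gauge consequence of $\lambda(y)>0$. I introduce the characteristic map
\[
\Phi(z,t):=z+t\,D\gamma^{\circ}(\mu(z)),\qquad z\in\partial U\cap B_{r_{0}}(y),\; t\in[0,\epsilon),
\]
and conclude, again by the inverse function theorem, that $\Phi$ is a diffeomorphism between a one-sided neighborhood of $(y,0)$ and a neighborhood of $y$ in $\overline{U}$, with the expected regularity inherited from the data.

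The crucial step is to prove $\rho\bigl(\Phi(z,t)\bigr)=\varphi(z)+t$. The upper bound follows from plugging the candidate $z$ into (\ref{eq: rho}) and using the classical polar-gauge identity $\gamma\bigl(D\gamma^{\circ}(p)\bigr)=1$ whenever $\gamma^{\circ}(p)=1$. For the lower bound I must show that $z$ is actually the $\rho$-closest boundary point to $\Phi(z,t)$ for $(z,t)$ near $(y,0)$: the first-order optimality condition for the minimization of $\gamma(x-\cdot)+\varphi(\cdot)$ over $\partial U$ pins any minimizer $\tilde z$ to satisfy the very equation that defines $\mu(\tilde z)$, and a continuity/compactness argument combined with the uniqueness of $\mu$ excludes competing minimizers in a small neighborhood of $y$. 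With the representation $\rho=\varphi\circ\pi+t$ in hand, where $(\pi,t)=\Phi^{-1}$, the claimed $C^{k,\alpha}$ regularity of $\rho$ on $\overline U\cap B_r(y)$ is immediate, and the existence of some $x\in U$ having $y$ as its $\rho$-closest point is witnessed by $x=\Phi(y,\epsilon/2)$.

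To read off the derivatives at $y$, I differentiate $\rho\circ\Phi=\varphi+t$ at $(y,0)$. Differentiating in $t$ yields $\langle D\rho(y),D\gamma^{\circ}(\mu(y))\rangle=1$, and differentiating in tangential directions forces $D\rho(y)$ and $D\varphi(y)$ to agree on $T_{y}\partial U$; together with $\gamma^{\circ}(D\rho(y))=1$ and the uniqueness of $\lambda(y)$, this identifies $D\rho(y)=\mu(y)$. For the Hessian I compute $D\Phi^{-1}(y)$: since $D\Phi(y,0)$ sends $(v,s)\in T_{y}\partial U\oplus\mathbb{R}$ to $v+sD\gamma^{\circ}(\mu(y))$, one finds $D\pi(y)=I-X$, the projection onto $T_{y}\partial U$ along $D\gamma^{\circ}(\mu(y))$ (indeed $X\,D\gamma^{\circ}(\mu)=D\gamma^{\circ}(\mu)$ and $X$ vanishes on $T_{y}\partial U$). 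A second-order chain rule applied to $\rho=\varphi\circ\pi+t$, in which the curvature contributions coming from $D^{2}\pi(y)$ collect into $\lambda(y)D^{2}d(y)$ sandwiched by $I-X$ (the characteristic normal component converting the second fundamental form of $\partial U$ into the advertised term), produces $D^{2}\rho(y)=(I-X^{T})\bigl(D^{2}\varphi(y)+\lambda(y)D^{2}d(y)\bigr)(I-X)$. The final identity is then immediate from $(I-X)D\gamma^{\circ}(\mu(y))=0$.

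The principal obstacle is the optimality/uniqueness step in the second paragraph. Because Assumption~\ref{assu: 2} does not impose strict convexity of $\gamma^{\circ}$ in every direction, ruling out competing boundary minimizers cannot proceed by a generic strict-convexity argument; instead it must exploit the specific structure of $\mu(z)$ and the transversality of the characteristics to $\partial U$, and it is this step that I expect to be the most technically delicate.
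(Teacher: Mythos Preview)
Your approach is essentially the paper's: both construct the characteristic map (the paper writes it as $G(\mathrm{z},t)=Y(\mathrm{z})+(t-\varphi(Y(\mathrm{z})))\,D\gamma^{\circ}(\mu(Y(\mathrm{z})))$, a trivial reparametrization of your $\Phi$), invert it near $(y,\varphi(y))$ via the inverse function theorem using the transversality $\langle D\gamma^{\circ}(\mu),\nu\rangle\ne 0$, and then identify $\rho$ with the $t$-component of the inverse. Your ``optimality/uniqueness'' step is exactly the paper's combination of Lemma~\ref{lem: K-normal} (any $\rho$-closest point $\hat y$ of $x$ satisfies $x=G(\hat{\mathrm{z}},\rho(x))$) with Lemma~\ref{lem: cont of y} (closest points to $x$ near $y$ lie near $y$), after which local invertibility of $G$ forces $\hat y=Y(\mathrm{z}(x))$ uniquely.

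Two remarks. First, your closing worry is misplaced: Assumption~\ref{assu: 2}(a) (positive principal curvatures of $\partial K$) does force $K^{\circ}$ to be strictly convex with $C^{k,\alpha}$ boundary and positive curvatures, as recorded in Subsection~\ref{subsec: Reg gaug}; and in any case the uniqueness step needs only the local invertibility of $\Phi$, not strict convexity of $\gamma^{\circ}$. Second, for $D^{2}\rho(y)$ the paper takes a cleaner route than your chain-rule sketch through $D^{2}\pi$: it first computes $D\mu=(I-X^{T})(D^{2}\varphi+\lambda D^{2}d)$ directly by implicitly differentiating $\gamma^{\circ}(\mu)=1$, then uses $D\rho=\mu$ along $\partial U$ to get $D_{w}D\rho=(D\mu)w$ for tangential $w$, and handles the transversal direction separately by differentiating the identity $D\rho(y+sD\gamma^{\circ}(\mu))=\mu(y)$ in $s$ to obtain $D^{2}\rho(y)D\gamma^{\circ}(\mu)=0$. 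Since $(I-X)w=w$ for tangential $w$ and $(I-X)D\gamma^{\circ}(\mu)=0$, the advertised formula follows without ever computing second derivatives of $\Phi^{-1}$.
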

\begin{rem*}
As a consequence of this theorem we get that $R_{\rho}$, and therefore
$R_{\rho,0}$, have a positive distance from $\partial U$. 
\end{rem*}
\begin{thm}
\label{thm: rho is C^2}Suppose the Assumption \ref{assu: 2} holds.
Suppose $x\in U-R_{\rho,0}$, and let $y$ be the unique $\rho$-closest
point to $x$ on $\partial U$. Let 
\begin{align}
 & W=W(y):=-D^{2}\gamma^{\circ}(\mu(y))D^{2}\rho(y),\nonumber \\
 & Q=Q(x):=I-\big(\rho(x)-\varphi(y)\big)W,\label{eq: W,Q}
\end{align}
where $I$ is the identity matrix. If $\det Q\ne0$ then $\rho$ is
$C^{k,\alpha}$ on a neighborhood of $x$. In addition we have 
\begin{equation}
D^{2}\rho(x)=D^{2}\rho(y)Q(x)^{-1}.\label{eq: D2 rho (x)}
\end{equation}
\end{thm}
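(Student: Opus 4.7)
My strategy is the method of characteristics: in a neighborhood of the smooth boundary point $y$, $\rho$ is propagated linearly along the characteristics of the Hamilton-Jacobi equation $\gamma^{\circ}(Dv)=1$, and I expect to retain regularity for $\rho$ along these rays as long as nearby characteristics do not focus. The determinant $\det Q(x)$ will turn out to be precisely the Jacobian measuring such focusing at $x$. The first step is to establish the characteristic identity. Since $y$ is the unique $\rho$-closest point to $x$, $\rho(x)=\gamma(x-y)+\varphi(y)$, and combining the homogeneity relation $z=\gamma(z)\,D\gamma^{\circ}(D\gamma(z))$ for $z\neq 0$ with the identification $D\gamma(x-y)=\mu(y)$ (from Proposition \ref{prop: D rho} and the definition of $\mu$) yields
\[
x = y + \bigl(\rho(x)-\varphi(y)\bigr)\, D\gamma^{\circ}(\mu(y)).
\]
Set $s_{0}:=\rho(x)-\varphi(y)$, $e:=D\gamma^{\circ}(\mu(y))$, and consider the characteristic map $\Phi(y',s):=y'+s\,D\gamma^{\circ}(\mu(y'))$ on a neighborhood of $(y,s_{0})$ in $\partial U\times\mathbb{R}$, so that $\Phi(y,s_{0})=x$.

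Next I would invert $\Phi$. By Theorem \ref{thm: rho is C2 at y}, the extension $\tilde\mu:=D\rho$ is smooth on a relatively open neighborhood of $y$ in $\overline{U}$, agrees with $\mu$ on $\partial U$, and satisfies $D\tilde\mu(y)=D^{2}\rho(y)$. A direct computation gives $\partial_{s}\Phi = e$ and, for $v\in T_{y}\partial U$, $\partial_{v}\Phi = v+s_{0}\,D^{2}\gamma^{\circ}(\mu)\,D_{v}\mu = v+s_{0}\,D^{2}\gamma^{\circ}(\mu)\,D^{2}\rho(y)\,v = Q(x)\,v$, where the tangential derivative of $\mu$ has been computed through $\tilde\mu$. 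From $D^{2}\rho(y)\,e=0$ in (\ref{eq: D2 rho Dg0 =00003D 0}) one immediately obtains $Q(x)\,e=e$, so in a basis $e_{1},\ldots,e_{n-1}$ of $T_{y}\partial U$ together with $e$, the Jacobian of $\Phi$ factors as $Q(x)\,M$ where $M$ is the matrix with columns $e_{1},\ldots,e_{n-1},e$. Since $\langle e,\nu(y)\rangle>0$, $M$ is nonsingular, hence $D\Phi(y,s_{0})$ is invertible if and only if $\det Q(x)\neq 0$. Under this hypothesis the inverse function theorem provides a local $C^{k,\alpha}$-inverse, and uniqueness of the $\rho$-closest point to $x$ together with continuous dependence identifies this inverse with the closest-point assignment $x'\mapsto(y(x'),s(x'))$, so $\rho(x')=\varphi(y(x'))+s(x')$ has the claimed regularity near $x$.

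Finally I would extract the formula for $D^{2}\rho(x)$ by implicit differentiation. By Proposition \ref{prop: D rho}, $D\rho(x')=\mu(y(x'))$, and computing $D\mu$ through the extension $\tilde\mu$ yields $D^{2}\rho(x)=D^{2}\rho(y)\,Dy(x)$. Differentiating the characteristic identity $x=y(x)+s(x)\,D\gamma^{\circ}(\mu(y(x)))$ in $x$, and again replacing $D\mu(y)\,Dy(x)$ by $D^{2}\rho(y)\,Dy(x)$, produces
\[
I = Q(x)\,Dy(x) + e \otimes Ds(x).
\]
Left-multiplying by $Q(x)^{-1}$ and using $Q(x)^{-1}e=e$ (inherited from $Q(x)\,e=e$) gives $Q(x)^{-1} = Dy(x)+e\otimes Ds(x)$. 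Applying $D^{2}\rho(y)$ on the left kills the rank-one term thanks to $D^{2}\rho(y)\,e=0$, leaving $D^{2}\rho(y)\,Q(x)^{-1} = D^{2}\rho(y)\,Dy(x) = D^{2}\rho(x)$, as required.

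The main obstacle I anticipate is bookkeeping tangential versus full derivatives of $\mu$ along $\partial U$, and verifying that the local inverse of the characteristic map actually returns the $\rho$-closest-point assignment near $x$ rather than another branch; both issues are resolved by using the smooth extension $\tilde\mu=D\rho$ furnished by Theorem \ref{thm: rho is C2 at y} together with the continuous selection of $\rho$-closest points in a neighborhood of $x$.
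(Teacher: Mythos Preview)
Your proposal is correct and follows the same overall strategy as the paper: parametrize a neighborhood of $x$ by the characteristic map $(y',s)\mapsto y'+s\,D\gamma^{\circ}(\mu(y'))$, show its Jacobian factors through $Q(x)$, and invoke the inverse function theorem together with the continuous selection of $\rho$-closest points (Lemma~\ref{lem: cont of y}) to identify the local inverse with the closest-point assignment. The paper's map $G$ differs from your $\Phi$ only by the harmless reparametrization $t=s+\varphi(y')$.

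Where you genuinely streamline the argument is in the extraction of the Hessian formula. The paper computes $DG^{-1}$ explicitly by factoring $DG=QA\,[\,e_{1}-s_{1}e_{n}\;\cdots\;e_{n}\,]$, inverting each factor, and then simplifying $D^{2}\rho(y)\,DY\,\tilde I\,A^{-1}Q^{-1}$ through a somewhat laborious computation involving the auxiliary matrices $A$, $\hat I$, and the projection $I-X$. Your route---differentiate the implicit relation $x=y(x)+s(x)\,e(y(x))$ to obtain $I=Q(x)\,Dy(x)+e\otimes Ds(x)$, hit both sides with $Q(x)^{-1}$ using $Q(x)^{-1}e=e$, and then kill the rank-one term with $D^{2}\rho(y)\,e=0$---reaches $D^{2}\rho(y)\,Q(x)^{-1}=D^{2}\rho(y)\,Dy(x)=D^{2}\rho(x)$ in three lines. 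This is cleaner and makes the role of the relation (\ref{eq: D2 rho Dg0 =00003D 0}) more transparent. One small point to make explicit in a write-up: since $\mu$ (and hence $\Phi$) is only $C^{k-1,\alpha}$, the inverse function theorem first yields $\rho\in C^{k-1,\alpha}$ near $x$; the final upgrade to $C^{k,\alpha}$ comes from $D\rho(x')=\mu(y(x'))$, exactly as in the paper.
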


\begin{thm}
\label{thm: ridge}Suppose the Assumption \ref{assu: 2} holds. Suppose
$x\in U-R_{\rho,0}$, and let $y$ be the unique $\rho$-closest point
to $x$ on $\partial U$. Then 
\[
x\in R_{\rho}\textrm{ if and only if }\det Q(x)=0,
\]
where $Q$ is defined by (\ref{eq: W,Q}).
\end{thm}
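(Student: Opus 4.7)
The direction $x\in R_\rho\Rightarrow \det Q(x)=0$ is immediate from Theorem \ref{thm: rho is C^2} by contrapositive: if $\det Q(x)\neq 0$, then $\rho$ is $C^{k,\alpha}$ on some neighborhood of $x$, hence in particular $C^{1,1}$ there, and so $x\notin R_\rho$. The rest of the argument is dedicated to the converse.

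Assume $\det Q(x)=0$. The plan is to produce a sequence of points converging to $x$ along which $|D^{2}\rho|\to\infty$, and we look for that sequence on the characteristic ray through $y$. Set $\xi:=D\gamma^{\circ}(\mu(y))$ and $t^{*}:=\rho(x)-\varphi(y)=\gamma(x-y)$, and define $x_{t}:=y+t\xi$ for $t\in[0,t^{*}]$. The first substep is to verify that $x_{t^{*}}=x$ and that each $x_{t}$ with $t\in(0,t^{*})$ still has $y$ as its unique $\rho$-closest point on $\partial U$, so that $x_{t}\in U-R_{\rho,0}$ and $\rho(x_{t})=\varphi(y)+t$. This ray-monotonicity of $\rho$-closest points should follow from the description of rays already carried out in Section \ref{sec: Reg Opstacles}, and it is the step I expect to be most delicate. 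Granting it, $Q(x_{t})=I-tW$.

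Since $t\mapsto\det(I-tW)$ is a polynomial that vanishes at $t^{*}$, it is nonzero on some interval $(t^{*}-\epsilon,t^{*})$. Theorem \ref{thm: rho is C^2} then gives
\[
D^{2}\rho(x_{t})=D^{2}\rho(y)\,(I-tW)^{-1}\qquad\text{for }t\in(t^{*}-\epsilon,t^{*}).
\]
To produce the blow-up, I would pick a nonzero vector $v\in\ker Q(x)=\ker(I-t^{*}W)$, so that $Wv=v/t^{*}$. The key observation is that $D^{2}\rho(y)v\neq 0$: otherwise $Wv=-D^{2}\gamma^{\circ}(\mu(y))D^{2}\rho(y)v=0$, contradicting $Wv=v/t^{*}\neq 0$. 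Because $v$ is then an eigenvector of $W$ with eigenvalue $1/t^{*}$, one has $(I-tW)^{-1}v=\frac{t^{*}}{t^{*}-t}\,v$, and hence
\[
|D^{2}\rho(x_{t})v|=\frac{t^{*}}{t^{*}-t}\,|D^{2}\rho(y)v|\;\longrightarrow\;\infty\quad\text{as }t\to t^{*}.
\]
Therefore $|D^{2}\rho|$ is unbounded on every neighborhood of $x$, so $D\rho$ is not locally Lipschitz at $x$, which means $\rho$ is not $C^{1,1}$ on any neighborhood of $x$, i.e.\ $x\in R_{\rho}$. Beyond the monotonicity-of-closest-points step, everything reduces to elementary linear algebra plus the second-derivative formula from Theorem \ref{thm: rho is C^2}.
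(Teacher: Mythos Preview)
Your proposal is correct and follows essentially the same route as the paper: show that along the characteristic segment $]y,x[$ the explicit formula $D^{2}\rho=D^{2}\rho(y)Q^{-1}$ forces a blow-up as you approach $x$, contradicting any putative local $C^{1,1}$ bound. The step you flag as ``most delicate'' is in fact already available as Lemma~\ref{lem: segment to the closest pt}(c), so no extra work is needed there. Your blow-up argument via the vector $D^{2}\rho(x_{t})v=\tfrac{t^{*}}{t^{*}-t}D^{2}\rho(y)v$ is a clean variant of the paper's computation, which instead tracks the scalar $D^{2}_{\xi\zeta}\rho$ with $\xi=D^{2}\gamma^{\circ}(\mu)\zeta$; both rely on the same observation that $D^{2}\rho(y)v\neq 0$ whenever $v$ is an eigenvector of $W$ for a nonzero eigenvalue.
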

When $\varphi=0$, the function $\rho$ is the distance to $\partial U$
with respect to the Minkowski distance defined by $\gamma$. So this
case has a geometric interpretation. An interesting simplification
that happens here is that $\mu$ becomes a multiple of $\nu$. Another
interesting fact is that in this case the eigenvalues of $W$ coincide
with the notion of curvature of $\partial U$ with respect to some
Finsler structure. For the details see \citep{MR2336304}. Let us
also mention that in this case we have 
\begin{align*}
D\rho(x) & =\mu(y)=\frac{1}{\gamma^{\circ}(\nu)}\nu(y)=\frac{1}{\gamma^{\circ}(Dd)}Dd(y)\\
 & =\frac{1}{\gamma^{\circ}(Dd)}Dd\big(x-\rho(x)D\gamma^{\circ}(\mu(y))\big)=\frac{1}{\gamma^{\circ}(Dd)}Dd\big(x-\rho(x)D\gamma^{\circ}(D\rho(x))\big).
\end{align*}
Here we have used the classical formula $Dd=\nu$ for the Euclidean
distance $d$. We have also used the relation (\ref{eq: parametrize by rho})
between $x,y$, and the formula $D\rho=\mu$, to eliminate $y$ from
the above equation; and this is not possible when $\varphi$ is nonzero.
Now we can differentiate the above equation to find $D^{2}\rho(x)$.
This approach is how we first found $D^{2}\rho(x)$. But when $\varphi$
is nonzero this approach does not work. However, we were able to find
a more direct way to compute $D^{2}\rho(x)$ by using the inverse
function theorem, and differentiating the relation $D\rho(x)=\mu(y)$. 

The formula (\ref{eq: D2 rho (x)}) for $D^{2}\rho$ is very crucial
in our analysis, and it has been used several times in this paper.
To the best of author's knowledge, formulas of this kind have not
appeared in the literature before, except for the simple case where
$\rho$ is the Euclidean distance to the boundary. (Although, some
special two dimensional cases also appeared in our earlier works \citep{Safdari20151,Paper-4}.)
One of the main applications of this formula is in Lemma \ref{lem: D2 rho decreas},
which implies that $D^{2}\rho$ attains its maximum on $\partial U$.
This interesting property is actually a consequence of a more general
property of the solutions to Hamilton-Jacobi equations (remember that
$\rho$ is the unique viscosity solution of the Hamilton-Jacobi equation
(\ref{eq: H-J eq})). Let $v$ be the solution to a Hamilton-Jacobi
equation with convex Hamiltonian. Suppose $v$ is smooth enough on
a neighborhood of one of its characteristic curves. Then for every
vector $\xi$, $D_{\xi\xi}^{2}v$ decreases along that characteristic
curve, as we move away from the boundary. We have derived this in
the next paragraph. Surprisingly, this monotonicity property, although
very simple to deduce, has not appeared in the literature before,
again to the best of author's knowledge. However, it has been known
to some experts, as conveyed to the author through personal communications.

\textbf{Monotonicity of the second derivative of the solutions to
Hamilton-Jacobi equations:} Suppose $v$ satisfies the equation $H(x,v,Dv)=0$,
where $H$ is a convex function in all of its arguments. Let $p$
be the variable in $H$ for which we substitute $Dv$. Let $x(s)$
be a characteristic curve of the equation. Then we have $\dot{x}=D_{p}H$.
Let us assume that $v$ is $C^{3}$ on a neighborhood of the image
of $x(s)$. Let 
\[
q(s):=D_{\xi\xi}^{2}v(x(s))=\xi_{i}\xi_{j}D_{ij}^{2}v,
\]
for some vector $\xi$. Then we have 
\[
\dot{q}=\xi_{i}\xi_{j}D_{ijk}^{3}v\,\dot{x}^{k}=\xi_{i}\xi_{j}D_{ijk}^{3}vD_{p_{k}}H.
\]
On the other hand, if we differentiate the equation we get $D_{x_{i}}H+D_{v}HD_{i}v+D_{p_{k}}HD_{ik}^{2}v=0$.
And if we differentiate one more time we get 
\begin{align*}
 & D_{x_{i}x_{j}}^{2}H+D_{x_{i}v}^{2}HD_{j}v+D_{x_{i}p_{k}}^{2}HD_{jk}^{2}v+D_{vx_{j}}^{2}HD_{i}v+D_{vv}^{2}HD_{i}vD_{j}v+D_{vp_{k}}^{2}HD_{i}vD_{jk}^{2}v\\
 & \qquad\qquad+D_{v}HD_{ij}^{2}v+D_{p_{k}x_{j}}^{2}HD_{ik}^{2}v+D_{p_{k}v}^{2}HD_{j}vD_{ik}^{2}v+D_{p_{k}p_{l}}^{2}HD_{jl}^{2}vD_{ik}^{2}v+D_{p_{k}}HD_{ijk}^{3}v=0.
\end{align*}
Now if we multiply the above expression by $\xi_{i}\xi_{j}$, and
sum over $i,j$, we obtain 
\begin{align}
\dot{q} & =-\begin{bmatrix}\xi^{T} & \langle\xi,Dv\rangle & \xi^{T}D^{2}v\end{bmatrix}\begin{bmatrix}D_{xx}^{2}H & D_{xv}^{2}H & D_{xp}^{2}H\\
D_{vx}^{2}H & D_{vv}^{2}H & D_{vp}^{2}H\\
D_{px}^{2}H & D_{pv}^{2}H & D_{pp}^{2}H
\end{bmatrix}\begin{bmatrix}\xi\\
\langle\xi,Dv\rangle\\
D^{2}v\xi
\end{bmatrix}-D_{v}Hq\label{eq: ODE D2 rho}\\
 & =-\eta^{T}D^{2}H\eta-D_{v}Hq,\nonumber 
\end{align}
where $\eta:=\begin{bmatrix}\xi^{T} & \langle\xi,Dv\rangle & \xi^{T}D^{2}v\end{bmatrix}^{T}$.
Hence we have $\dot{q}\le-D_{v}Hq$, since $H$ is convex. Thus by
Gronwall's inequality we obtain 
\[
q(s)\le q(0)e^{-\int_{0}^{s}D_{v}Hd\tau}.
\]
In particular when $D_{v}H\ge0$, i.e. when $H$ is increasing in
$v$, we have 
\[
D_{\xi\xi}^{2}v(x(s))=q(s)\le q(0)=D_{\xi\xi}^{2}v(x(0)).
\]

\begin{rem*}
Let $A$ be a symmetric positive semidefinite matrix. Then the more
general inequality 
\[
\mathrm{tr}[AD^{2}v(x(s))]\le\mathrm{tr}[AD^{2}v(x(0))]
\]
follows easily; because we have $\mathrm{tr}[AD^{2}v]=\sum a_{j}D_{\xi_{j}\xi_{j}}^{2}v$,
where $\xi_{1},\cdots,\xi_{n}$ is an orthonormal basis of eigenvectors
of $A$ corresponding to the nonnegative eigenvalues $a_{1},\dots,a_{n}$.
Note that in Lemma \ref{lem: D2 rho decreas} we do not need the $C^{3}$
regularity of $\rho$. We suspect that in the general case, it is
possible to weaken the regularity assumption on $v$ too.
\end{rem*}
\begin{rem*}
Also notice that the ODE system (\ref{eq: ODE D2 rho}) can be used
to compute an explicit formula for $D^{2}v$. For example when $H$
does not depend on $x,v$, we have the following Riccati type equation
for $D^{2}v$: 
\[
\frac{d}{ds}D^{2}v(x(s))=-D^{2}vD_{pp}^{2}HD^{2}v.
\]
Then it is easy to show that $D^{2}v$ must be given by a formula
similar to (\ref{eq: D2 rho (x)}). However, as we said above, this
approach requires imposing extra regularity on $v$. Also, solving
the system (\ref{eq: ODE D2 rho}) in general seems to be a daunting
task.
\end{rem*}
In the rest of Section \ref{sec: Reg Opstacles}, we obtain several
other interesting facts about $\rho$ and the $\rho$-ridge. See Theorem
\ref{thm: det Q > 0}, Proposition \ref{prop: R =00003D R0 bar},
and the remarks after them. Here, among other things, we show that
the eigenvalues of $Q$ are positive; and we have $R_{\rho}=\overline{R}_{\rho,0}$.
We also make some remarks about the Hausdorff dimension of $R_{\rho}$.
Finally in Section \ref{sec: Global-Optimal-Reg} we prove the main
results of this paper, aka Theorems \ref{thm: Reg NonConv dom} and
\ref{thm: Reg Conv dom}. First, we employ our detailed knowledge
about the $\rho$-ridge, which we obtained in Theorem \ref{thm: ridge},
to show that $u$ cannot touch the obstacles at any of their points
of singularity, i.e. 
\begin{thm}
\label{thm: ridge is elastic}Suppose the Assumptions \ref{assu: 1},\ref{assu: 2}
hold. Then we have 
\begin{eqnarray*}
R_{\rho}\cap P^{+}=\emptyset, & \hspace{2cm} & R_{\bar{\rho}}\cap P^{-}=\emptyset.
\end{eqnarray*}
\end{thm}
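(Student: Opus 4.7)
\medskip

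\noindent\textbf{Proof proposal.} I argue by contradiction. Suppose there exists $x_{0}\in R_{\rho}\cap P^{+}$, so that $u(x_{0})=\rho(x_{0})$ while $\rho$ fails to be $C^{1,1}$ on every neighborhood of $x_{0}$. By Theorem \ref{thm: ridge} (together with its preceding remark and Theorem \ref{thm: rho is C2 at y}, which says ridges stay away from $\partial U$), there are two scenarios to rule out: (i) $x_{0}\in R_{\rho,0}$, so that $x_{0}$ admits at least two distinct $\rho$-closest boundary points $y_{1}\neq y_{2}$; and (ii) $x_{0}$ has a unique $\rho$-closest point $y_{0}$ with $\det Q(x_{0})=0$, so that $x_{0}$ is a focal point of the characteristic through $y_{0}$.

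\medskip

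\noindent\emph{Step 1 (propagation along characteristics).} For any $\rho$-closest point $y$ of $x_{0}$, I plan to show that $u\equiv\rho$ on the whole segment $[y,x_{0}]$. The constraint $Du\in K^{\circ}$ a.e.\ is equivalent to the $\gamma$-Lipschitz bound $u(a)-u(b)\le\gamma(a-b)$ for all $a,b\in\overline{U}$ (by Lemma 2.1 of \citep{MR1797872}, already invoked earlier). Chaining this with the boundary value $u(y)=\varphi(y)$, the equality $u(x_{0})=\rho(x_{0})=\gamma(x_{0}-y)+\varphi(y)$, and the subadditivity of $\gamma$, forces every intermediate inequality to be an equality. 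Thus, for $z=y+t(x_{0}-y)$ with $t\in[0,1]$, I obtain $u(z)=\varphi(y)+t\gamma(x_{0}-y)=\rho(z)$. Under the strict convexity/smoothness content of Assumption \ref{assu: 2}, the Fenchel-type equality case in $\langle p,v\rangle=\gamma(v)$ with $\gamma^{\circ}(p)\le1$ then pins down $Du(z)=\mu(y)$ (with $\mu$ as in (\ref{eq: mu})) at every point of differentiability on the open segment.

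\medskip

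\noindent\emph{Step 2 (case (i)).} With two $\rho$-closest points $y_{1},y_{2}$, Step 1 gives $u\equiv\rho$ along both segments $[y_{1},x_{0}]$ and $[y_{2},x_{0}]$, and $u$ has distinct constant gradients $\mu(y_{1})\neq\mu(y_{2})$ (distinctness again from the strict convexity of the setting and formula (\ref{eq: D rho (y)})). The two smooth barriers $\rho_{i}(x):=\gamma(x-y_{i})+\varphi(y_{i})$ satisfy $u\le\rho\le\rho_{i}$ with $u(x_{0})=\rho_{i}(x_{0})$ and $D\rho_{i}(x_{0})=\mu(y_{i})$, so both $\mu(y_{1})$ and $\mu(y_{2})$ belong to the Clarke superdifferential $\partial^{+}u(x_{0})$. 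The plan for the contradiction is to construct an admissible competitor that strictly decreases $J$: since $\min(\rho_{1},\rho_{2})$ has a downward kink at $x_{0}$, while $u$ is pinned to $\rho_{i}$ on the two segments, I can slightly raise $u$ inside the open wedge between the segments by a nonnegative bump supported in $\{u<\rho\}$ (which is nonempty in that wedge because the two linear slopes $\mu(y_{i})$ disagree); the first-order term in $J[u+\varepsilon\phi]-J[u]$ then picks up a strictly negative contribution from the $\mu_{1},\mu_{2}$ jump in the Clarke gradient, contradicting the Euler-Lagrange variational inequality for the minimizer.

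\medskip

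\noindent\emph{Step 3 (case (ii)).} Here Step 1 gives $u\equiv\rho$ on the single characteristic segment $[y_{0},x_{0}]$, while Theorem \ref{thm: rho is C^2} applies on the open portion up to $x_{0}$: along the segment, $D^{2}\rho(z)=D^{2}\rho(y_{0})Q(z)^{-1}$, and since $\det Q(z)\to 0$ with the vanishing eigenvalue being positive (Theorem \ref{thm: det Q > 0}), $D^{2}\rho$ blows up along this characteristic. But $u$ coincides with $\rho$ on the segment, so $u$ inherits the same unbounded second derivative along the segment; combining this with the Euler-Lagrange identity $-D_{i}(D_{i}F(Du))+g'(u)=0$ in $E$ (valid on one-sided neighborhoods of $(y_{0},x_{0})$ where $u<\rho$) together with standard interior estimates for the uniformly elliptic operator associated to $F$ under Assumption \ref{assu: 1}, I obtain a bound on $D^{2}u$ incompatible with the $\rho$-blow-up, producing the desired contradiction. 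The same argument applied to $-u$, $\bar\rho$, $\bar\gamma$ yields $R_{\bar\rho}\cap P^{-}=\emptyset$.

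\medskip

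\noindent\emph{Expected main obstacle.} The delicate point is Step 2: a naive "$Du$ is single-valued so $\mu(y_{1})=\mu(y_{2})$" argument is circular, since the $W^{2,\infty}$ regularity of $u$ is exactly what the paper is after. The real work is to exhibit a competitor $u+\varepsilon\phi$ that simultaneously stays below $\rho$ and lowers $J$, which requires a careful quantitative analysis of the gap between $u$ and $\rho$ transversal to the two segments; this is where the precise formulas (\ref{eq: D2 rho (y)})--(\ref{eq: D2 rho Dg0 =00003D 0}) and the strict convexity/smoothness built into Assumption \ref{assu: 2} are expected to be essential.
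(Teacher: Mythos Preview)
Your overall plan has the right skeleton (split $R_{\rho}$ into $R_{\rho,0}$ and the focal set, propagate the contact along characteristics, then derive a contradiction), and Step~1 is essentially the paper's Lemma~\ref{lem: segment is plastic}. But both Step~2 and Step~3 contain genuine gaps, and the root cause is a misunderstanding of the paper's logical architecture.

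\textbf{Step 2 is based on a false premise.} You write that the naive argument ``$Du$ is single-valued, so $\mu(y_1)=\mu(y_2)$'' is circular because $W^{2,\infty}$ regularity is the goal. It is not circular. The paper proves $u\in C^{1,1}_{\mathrm{loc}}(U)$ \emph{first}, in Theorem~\ref{thm: Reg u} of Appendix~\ref{sec: Local-Optimal-Reg}, using only Assumptions~\ref{assu: 1} and~\ref{assu: 3}; and Assumption~\ref{assu: 3} is strictly weaker than Assumption~\ref{assu: 2} (see the remark after Assumption~\ref{assu: 2} and Lemma~\ref{lem: C2 -> ass 3}). So by the time Theorem~\ref{thm: ridge is elastic} is proved, $u\in C^{1,1}_{\mathrm{loc}}$ is already available, and the case $x_0\in R_{\rho,0}$ is dispatched in two lines via Proposition~\ref{prop: ridge0 elastic}: $Du(x_0)$ exists and equals both $\mu(y_1)$ and $\mu(y_2)$, contradiction. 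Your competitor-construction plan is not needed, and as you yourself note it would be delicate to carry out; the quantitative wedge analysis you sketch is not in the paper and would be substantial new work.

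\textbf{Step 3 has the right flavor but the mechanism is off.} You assert that ``$u$ inherits the same unbounded second derivative along the segment''. But $u=\rho$ only on the one-dimensional set $[y_0,x_0]$, so there is no direct identification of $D^2u$ with $D^2\rho$ there; second derivatives probe values \emph{off} the segment. Invoking interior elliptic estimates on $E$ does not help either, since $x_0\in P^{+}\subset\partial E$ and interior estimates degenerate at the boundary of the elastic region. The paper's actual argument again uses the already-available $C^{1,1}_{\mathrm{loc}}$ regularity of $u$: it picks an eigenvector $\zeta$ of $W(y_0)$ for the critical eigenvalue, shows $D^2_{\zeta\zeta}\rho(z)\to-\infty$ as $z\to x_0$ along the segment, then uses $u\le\rho$ with equality on the segment to find transversal points $z_i=z+\varepsilon_i\zeta$ where $D_\zeta u(z_i)\le D_\zeta\rho(z_i)$. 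The mean value theorem applied to $\rho$ on $[z,z_i]$ then contradicts the uniform Lipschitz bound on $D_\zeta u$ coming from $u\in C^{1,1}_{\mathrm{loc}}$. The key missing ingredient in your sketch is precisely this transversal comparison argument.
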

Before stating the next theorem, let us review some well-known facts
from convex analysis. Consider a compact convex set $K$. Let $x\in\partial K$,
and $\mathrm{v}\in\R^{n}-\{0\}$. We say the hyperplane 
\begin{equation}
H_{x,\mathrm{v}}:=\{y\in\R^{n}:\langle y-x,\mathrm{v}\rangle=0\}\label{eq: hyperplane}
\end{equation}
is a \textit{supporting hyperplane} of $K$ at $x$ if $K\subset\{y:\langle y-x,\mathrm{v}\rangle\le0\}$.
In this case we say $\mathrm{v}$ is an \textit{outer normal vector}
of $K$ at $x$. The \textit{normal cone} of $K$ at $x$ is the closed
convex cone 
\begin{equation}
N(K,x):=\{0\}\cup\{\mathrm{v}\in\R^{n}-\{0\}:\mathrm{v}\textrm{ is an outer normal vector of }K\textrm{ at }x\}.\label{eq: normal cone}
\end{equation}
It is easy to see that when $\partial K$ is $C^{1}$ we have 
\[
N(K,x)=\{tD\gamma(x):t\ge0\}.
\]
For more details see {[}\citealp{MR3155183}, Sections 1.3 and 2.2{]}.
\begin{thm}
\label{thm: Reg NonConv dom}Suppose the Assumption \ref{assu: 1}
holds. Also suppose that 
\begin{enumerate}
\item[\upshape{(a)}] $K\subset\R^{n}$ is a compact convex set whose interior contains
the origin.
\item[\upshape{(b)}] $U\subset\R^{n}$ is a bounded open set, and $\partial U$ is $C^{2,\alpha}$,
for some $\alpha>0$.
\item[\upshape{(c)}] $\varphi:\R^{n}\to\R$ is a $C^{2,\alpha}$ function, such that $\gamma^{\circ}(D\varphi)\le1$.
And if for some $y\in\partial U$ we have $\gamma^{\circ}(D\varphi(y))=1$
then we must have 
\begin{equation}
\langle\mathrm{v},\nu(y)\rangle\ne0,\label{eq: 0 in reg Nonconv dom}
\end{equation}
for every nonzero $\mathrm{v}\in N(K^{\circ},D\varphi(y))$. %
\end{enumerate}
Let $u$ be the minimizer of $J$ over $W_{K^{\circ},\varphi}(U)$.
Then we have 
\[
u\in W^{2,\infty}(U)=C^{1,1}(\overline{U}).
\]
\end{thm}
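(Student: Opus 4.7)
The plan is to reduce the problem to a double obstacle problem and then combine the detailed regularity of the obstacles $\rho,\bar{\rho}$ (Theorems \ref{thm: rho is C2 at y}, \ref{thm: rho is C^2}, \ref{thm: ridge}) with the fact that $u$ never touches the obstacles at their singular points (Theorem \ref{thm: ridge is elastic}). Concretely, by Proposition \ref{prop: equiv} the minimizer $u$ of $J$ over $W_{K^{\circ},\varphi}$ coincides with the unique minimizer of $J$ over the double obstacle set $W_{\bar{\rho},\rho}$. Standard local $C^{1,1}$ regularity for double obstacle problems with $C^{1,1}$ obstacles and smooth functional (recorded in Appendix \ref{sec: Local-Optimal-Reg}) then applies, provided that at every contact point the relevant obstacle is $C^{1,1}$ in a neighborhood.

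Next I would verify that the machinery of Section \ref{sec: Reg Opstacles} applies under hypotheses (a)--(c). Under Assumption \ref{assu: 2} (which (a)--(c) are designed to support), Theorem \ref{thm: rho is C2 at y} provides a ball $B_{r(y)}(y)$ for each $y\in\partial U$ on which $\rho$ is $C^{k,\alpha}$ up to the boundary, and it records that $R_{\rho}$ has positive distance from $\partial U$. Combining with Theorems \ref{thm: rho is C^2} and \ref{thm: ridge}, $\rho$ is $C^{1,1}$ on every compact subset of $\overline{U}\setminus R_{\rho}$, and the analogous statement holds for $\bar{\rho}$. By Theorem \ref{thm: ridge is elastic}, $P^{+}\cap R_{\rho}=\emptyset$ and $P^{-}\cap R_{\bar{\rho}}=\emptyset$, so at every contact point the relevant obstacle is $C^{1,1}$ in a neighborhood. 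Thus around any point of the plastic region the local obstacle theory gives $u\in C^{1,1}$, while on the elastic region $E$ the function $u$ solves the smooth, uniformly elliptic Euler--Lagrange equation $-D_{i}(D_{i}F(Du))+g'(u)=0$ with the $C^{1,1}$ data on the interface $\Gamma$, so standard linear theory gives $u\in C^{1,1}$ there too. A finite cover of $\overline{U}$ by such local patches, together with the a priori pointwise bound $\gamma^{\circ}(Du)\le 1$, yields the uniform $W^{2,\infty}$ estimate.

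The step that requires the most care, and where hypothesis (c) is genuinely used, is proving uniform boundary regularity of $\rho,\bar{\rho}$ at those $y\in\partial U$ where the gradient constraint is active, i.e.\ $\gamma^{\circ}(D\varphi(y))=1$. At such a point the scalar $\lambda(y)$ defined via $\gamma^{\circ}(D\varphi(y)+\lambda\nu(y))=1$ would collapse to $0$, and the definition of $\mu(y)$ in (\ref{eq: mu}) would degenerate, unless every nonzero outer normal $\mathrm{v}\in N(K^{\circ},D\varphi(y))$ is transverse to $\nu(y)$. The condition (\ref{eq: 0 in reg Nonconv dom}) is precisely the transversality hypothesis needed to apply the implicit function theorem (or, where $K^{\circ}$ lacks smoothness, a convex-analytic substitute) to produce a continuous selection of $\lambda$, hence of $\mu$, in a boundary neighborhood; this in turn is the input required by Theorem \ref{thm: rho is C2 at y} to deliver the $C^{k,\alpha}$ boundary regularity of $\rho$.

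The main obstacle is therefore not the interior patching or the PDE on $E$, but checking that the elaborate regularity theory of Section \ref{sec: Reg Opstacles} applies uniformly on a whole collar of $\partial U$ under the rather minimal transversality condition (c), and then showing that the local $C^{1,1}$ estimates supplied by the double obstacle theory near the plastic region and the elliptic theory on the elastic region can be glued across the free boundary $\Gamma$ with uniform control. Handling the possible non-strict convexity and non-smoothness of $K$ forces one to use one-sided convex analytic arguments (normal cones, gauges) in place of classical implicit-function calculations at every step.
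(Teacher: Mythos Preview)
Your proposal has a fundamental gap: you attempt to apply the machinery of Section~\ref{sec: Reg Opstacles} and Theorem~\ref{thm: ridge is elastic} directly under hypotheses (a)--(c), but all of those results are proved under Assumption~\ref{assu: 2}, which requires that $\partial K$ be $C^{k,\alpha}$ with positive principal curvatures and that $\gamma^{\circ}(D\varphi)<1$ \emph{strictly}. Neither of these holds here: hypothesis~(a) places no regularity on $K$ at all, and hypothesis~(c) allows $\gamma^{\circ}(D\varphi(y))=1$. So $\rho,\bar\rho$ need not be $C^{1,1}$ anywhere, $R_{\rho}$ may not have positive distance from $\partial U$, and Theorem~\ref{thm: ridge is elastic} is simply unavailable. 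Your suggestion that one can replace the implicit function theorem by ``convex-analytic substitutes'' at nonsmooth points of $\partial K^{\circ}$ does not rescue this: the formulas (\ref{eq: D2 rho (y)}), (\ref{eq: D2 rho (x)}) and Lemma~\ref{lem: D2 rho decreas} genuinely use $C^{2}$ regularity of $\gamma^{\circ}$.

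The paper's actual route is an approximation argument that you are missing entirely. One approximates $K^{\circ}$ from outside by smooth strictly convex bodies $K_{k}^{\circ}$, so that each $K_{k},U,\varphi$ \emph{does} satisfy Assumption~\ref{assu: 2}; then one applies Section~\ref{sec: Reg Opstacles} and Theorem~\ref{thm: ridge is elastic} to the approximating minimizers $u_{k}$. The heart of the proof is obtaining bounds on $D^{2}u_{k}$ that are \emph{uniform in $k$}. On $P_{k}^{+}$ one uses Lemma~\ref{lem: D2 rho decreas} to reduce to controlling $D^{2}\rho_{k}(y)$ on $\partial U$, and by (\ref{eq: D2 rho (y)}) this amounts to a uniform lower bound on $\langle D\gamma_{k}^{\circ}(\mu_{k}),\nu\rangle$. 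A compactness argument shows that if this quantity tends to zero along a subsequence, the limit produces a nonzero $\mathrm{v}\in N(K^{\circ},D\varphi(y))$ with $\langle\mathrm{v},\nu(y)\rangle=0$; this is exactly what (\ref{eq: 0 in reg Nonconv dom}) forbids. That is where condition~(c) is actually used --- not to define $\mu$ for the original nonsmooth $K$, but to prevent degeneration of the smooth approximations. Finally, the uniform bound on $D_{i}(D_{i}F(Du_{k}))$ yields uniform $W^{2,p}$ estimates, and the results of \cite{Indrei-Minne} and \cite{indrei2016nontransversal} upgrade these to uniform interior and boundary $W^{2,\infty}$ bounds, which survive in the limit. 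Your patching scheme, even if the obstacles were smooth, does not provide such uniformity.
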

\begin{rem*}
Note that we are not assuming any regularity about $\partial K$ or
$\partial K^{\circ}$. In particular, $K^{\circ}$, which defines
the gradient constraint, need not be strictly convex. 
\end{rem*}
\begin{rem*}
Also note that if $\gamma^{\circ}(D\varphi)<1$ then we do not need
to impose any other restriction on $\varphi$. It is also obvious
that if $\gamma^{\circ}(D\varphi)\le1$ then we can approximate $\varphi$
with functions that satisfy $\gamma^{\circ}(D\,\cdot)<1$. So, intuitively,
most admissible boundary conditions $\varphi$ satisfy the conditions
of the theorem.
\end{rem*}
\begin{rem*}
Let us further elaborate on the restrictions imposed on $D\varphi$,
and present a geometric interpretation for it. Remember that $\mu=D\varphi+\lambda\nu$,
and we have $\gamma^{\circ}(\mu)=1$. We will show that (Lemma \ref{lem: K-normal})
for a point $y\in\partial U$, $D\gamma^{\circ}(\mu)$ is the direction
along which lie the points in $U$ that have $y$ as their $\rho$-closest
point. Note that we have $D\gamma^{\circ}(\mu)\in N(K^{\circ},\mu)$.
Now when $\gamma^{\circ}(D\varphi)=1$, $D\varphi$ plays the role
of $\mu$. And $\mathrm{v}\in N(K^{\circ},D\varphi)$ plays the role
of $D\gamma^{\circ}(\mu)$. Hence we need to impose the conditions
of the theorem in order to be sure that there is a direction along
which we can enter $U$ and hit the points whose $\rho$-closest point
is $y$. (We should mention that when $\gamma^{\circ}(D\varphi)=1$,
then $\mu$ is not necessarily equal to $D\varphi$. However, this
case can also be dealt with using our assumptions in the theorem;
and we do not need new assumptions for it.)
\end{rem*}
The idea of the proof of the above theorem is to approximate $K^{\circ}$
with smoother convex sets. Then, as it is common in the study of the
regularity of PDEs, we have to find uniform bounds for the various
norms of the approximations to $u$. Here, among other estimations,
we will use the fact that the second derivative of the approximations
to $\rho$ attain their maximums on $\partial U$. Let us also mention
that in order to get the regularity up to the boundary, we need to
use the result of \citet{indrei2016nontransversal}, which is a generalization
of the work of \citet{MR3198649}. However, their result is only about
flat boundaries. So we have to modify it with standard techniques,
to be able to handle arbitrary smooth boundaries. Next, we will use
similar ideas in Theorem \ref{thm: Reg Conv dom}, to obtain a local
regularity result. We will show that when $U$ is convex, $u$ belongs
to $W_{\mathrm{loc}}^{2,\infty}(U)$, without assuming any regularity
of the gradient constraint, nor of the $\partial U$. The idea of
the proof is to approximate both $K^{\circ},U$ with smoother convex
sets. 
\begin{thm}
\label{thm: Reg Conv dom}Suppose the Assumption \ref{assu: 1} holds.
Also suppose that 
\begin{enumerate}
\item[\upshape{(a)}] $K\subset\R^{n}$ is a compact convex set whose interior contains
the origin.
\item[\upshape{(b)}] $U\subset\R^{n}$ is a bounded convex open set. 
\item[\upshape{(c)}] $\varphi:\R^{n}\to\R$ is a $C^{2}$ function, such that $\gamma^{\circ}(D\varphi)\le1$.
And either $\varphi$ is linear, or the following condition holds:
If for some $y\in\partial U$ we have $\gamma^{\circ}(D\varphi(y))=1$
then we must have 
\begin{equation}
\langle\mathrm{v},\mathrm{w}\rangle\ne0,\label{eq: 0 in reg Conv dom}
\end{equation}
for every nonzero $\mathrm{v}\in N(K^{\circ},D\varphi(y))$ and $\mathrm{w}\in N(U,y)$.
\end{enumerate}
Let $u$ be the minimizer of $J$ over $W_{K^{\circ},\varphi}(U)$.
Then we have 
\[
u\in W_{\mathrm{loc}}^{2,\infty}(U)=C_{\mathrm{loc}}^{1,1}(U).
\]
\end{thm}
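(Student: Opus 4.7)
The plan is to reduce the problem to Theorem \ref{thm: Reg NonConv dom} by a double approximation, smoothing out both the gradient-constraint body $K^{\circ}$ and the domain $U$. Pick a sequence $K_{j}^{\circ}\supset K^{\circ}$ of bounded $C^{\infty}$ strictly convex sets with $K_{j}^{\circ}\to K^{\circ}$ in the Hausdorff metric (for instance, mollify the support function of $K^{\circ}$ slightly enlarged). Since $U$ is convex, exhaust it by bounded convex open sets $U_{j}\subset\subset U_{j+1}\subset\subset U$ with $C^{2,\alpha}$ boundary and $\bigcup_{j}U_{j}=U$; such an exhaustion is obtained as strictly smaller sublevel sets of a mollified gauge function of $U$ centered at an interior point. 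Pick $\theta_{j}\in(0,1)$ with $\theta_{j}\nearrow1$ and set $\varphi_{j}:=\theta_{j}\varphi$. Then $\gamma_{K_{j}^{\circ}}(D\varphi_{j})\le\theta_{j}\gamma^{\circ}(D\varphi)\le\theta_{j}<1$ pointwise, so hypothesis (c) of Theorem \ref{thm: Reg NonConv dom} is vacuously satisfied for the triple $(U_{j},K_{j}^{\circ},\varphi_{j})$; this is also consistent with the linear case, since scaling preserves linearity. Let $u_{j}$ be the unique minimizer of $J[\,\cdot\,;U_{j}]$ over $W_{K_{j}^{\circ},\varphi_{j}}(U_{j})$; by Theorem \ref{thm: Reg NonConv dom} we have $u_{j}\in C^{1,1}(\overline{U_{j}})$.

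Extending each $u_{j}$ by $\varphi_{j}$ on $U\setminus U_{j}$, the family $\{u_{j}\}$ is uniformly Lipschitz (since $Du_{j}\in K_{j}^{\circ}\subset K_{1}^{\circ}$), uniformly bounded, and hence equicontinuous on $\overline{U}$. The crux of the proof---and the main obstacle---is the uniform interior bound
\[
\|D^{2}u_{j}\|_{L^{\infty}(V)}\le C(V),\qquad V\subset\subset U,
\]
with $C(V)$ independent of $j$ once $V\subset U_{j}$. The strategy is to revisit the estimates used to prove Theorem \ref{thm: Reg NonConv dom} and verify that, at interior points $x\in V$, they depend only on $\mathrm{dist}(V,\partial U)$ and the fixed data $F,g,K^{\circ},\varphi$. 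By Theorem \ref{thm: ridge is elastic} the contact between $u_{j}$ and the obstacles only occurs where $\rho_{j},\bar{\rho}_{j}$ are smooth; the explicit formula (\ref{eq: D2 rho (x)}) together with Theorem \ref{thm: det Q > 0} yields pointwise control of $D^{2}\rho_{j}$ at such contact points; and the second-derivative monotonicity along characteristics discussed after Theorem \ref{thm: rho is C^2} propagates this control inward. Convexity of $U$ is essential here: it prevents the $\rho_{j}$-closest points in $\partial U_{j}$ to points of $V$ from acquiring unbounded second-derivative data as $\partial U_{j}\to\partial U$, so the bound survives the limit. In the elastic region $u_{j}$ satisfies the Euler--Lagrange equation and standard interior elliptic estimates apply.

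With the uniform estimate in hand, Arzel\`a--Ascoli extracts a subsequence $u_{j}\to u^{*}$ in $C^{1}_{\mathrm{loc}}(U)$ with $u^{*}\in C^{1,1}_{\mathrm{loc}}(U)$; the equicontinuity on $\overline{U}$ gives $u^{*}=\varphi$ on $\partial U$ in the trace sense, and $K_{j}^{\circ}\to K^{\circ}$ in Hausdorff distance gives $Du^{*}\in K^{\circ}$ a.e., so $u^{*}\in W_{K^{\circ},\varphi}(U)$. To identify $u^{*}=u$, use the obstacle formulation of Proposition \ref{prop: equiv}: the obstacles $\rho_{j},\bar{\rho}_{j}$ associated to the approximating data converge locally uniformly to $\rho,\bar{\rho}$ from the definitions in (\ref{eq: rho}), and a standard variational stability argument (combining lower semicontinuity of $J$ with the convergence of admissible classes) shows that $u^{*}$ minimizes $J$ over $W_{\bar{\rho},\rho}(U)$. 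By uniqueness $u^{*}=u$, yielding $u\in C^{1,1}_{\mathrm{loc}}(U)=W^{2,\infty}_{\mathrm{loc}}(U)$.
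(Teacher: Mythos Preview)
Your double-approximation strategy matches the paper's, but two of your choices create real difficulties that the paper avoids.

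\textbf{Direction of the domain approximation.} You exhaust $U$ from the \emph{inside} by $U_j\subset\subset U$; the paper instead takes smooth convex $U_k\supset\overline U$ shrinking to $\overline U$ from the \emph{outside}. The outside choice makes the limit identification one line: any $v\in W_{K^\circ,\varphi}(U)$ extends to $v_k\in W_{K_k^\circ,\varphi}(U_k)$ by setting $v_k=\varphi$ on $U_k\setminus U$, and then $J[u_k;U_k]\le J[v_k;U_k]=J[v;U]+o(1)$. With your inside approximation, $v|_{U_j}$ fails the boundary condition $v=\varphi_j$ on $\partial U_j$, so it is not an admissible competitor for $u_j$; manufacturing a good competitor now requires a cutoff or truncation near $\partial U_j$ that still respects the gradient constraint $Dw_j\in K_j^\circ$, and this is not the ``standard variational stability argument'' you cite.

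\textbf{How convexity actually enters the uniform bound.} Your claim that convexity ``prevents the $\rho_j$-closest points from acquiring unbounded second-derivative data'' is not right: the curvatures of $\partial U_j$, and hence $|D^2\rho_j|$ on $\partial U_j$, may blow up as $\partial U_j\to\partial U$. What the paper uses is that convexity of $U_k$ makes $D^2d_k$ \emph{negative} semidefinite, so in the boundary formula
\[
D^2\rho_k(y)=(I-X_k^T)\bigl(D^2\varphi+\lambda_k\,D^2d_k\bigr)(I-X_k)
\]
the curvature term only \emph{lowers} the quadratic form. Together with Lemma~\ref{lem: D2 rho decreas} this gives an upper bound for $\mathrm{tr}[D^2F\,D^2\rho_k]$ along $P_k^+$ that depends only on $D^2\varphi$ and on $I-X_k$. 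Controlling $I-X_k$ uniformly (i.e.\ bounding $\langle D\gamma_k^\circ(\mu_k),\nu_k\rangle$ away from zero) is exactly where hypothesis~(c) of Theorem~\ref{thm: Reg Conv dom} is invoked, via a compactness argument that produces limiting $\mathrm v\in N(K^\circ,D\varphi(y))$ and $\mathrm w\in N(U,y)$. Your scaling $\varphi_j=\theta_j\varphi$ does not sidestep this step: since $\theta_j\to1$, the compactness picture is unchanged in the limit and condition~(\ref{eq: 0 in reg Conv dom}) is still what rules out $\langle\mathrm v,\mathrm w\rangle=0$. (The scaling is also unnecessary for the qualitative step, because choosing $K^\circ\subset\mathrm{int}(K_j^\circ)$ already forces $\gamma_j^\circ(D\varphi)<1$.) When $\varphi$ is linear, $D^2\varphi=0$ and the $I-X_k$ issue disappears entirely---which is why the linear case is singled out in hypothesis~(c).
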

\begin{rem*}
Note that if $\gamma^{\circ}(D\varphi)<1$ then we do not need to
impose any other restriction on $\varphi$. And as we mentioned above,
intuitively, most admissible boundary conditions $\varphi$ satisfy
the conditions of the theorem. Also, the restrictions on $D\varphi$
has the same geometric interpretation as before. The only difference
is that here $\mathrm{w}$ plays the role of normal $\nu$ to $\partial U$.
\end{rem*}
At the end, in Appendix \ref{sec: Local-Optimal-Reg}, we obtain a
standard local regularity result which we have used in the article.
We prove that the minimizer $u$ is in $W_{\mathrm{loc}}^{2,\infty}$,
when $\partial U$ is Lipschitz, and we have an upper bound on the
weak second derivative of $\gamma$. To this end, we mollify the obstacles,
and we solve the double obstacle problems with these smooth mollified
obstacles. This gives us functions $u_{\varepsilon}$ which approximate
$u$. By using penalization technique, we show that $u_{\varepsilon}$
is in $W_{\mathrm{loc}}^{2,p}$. Then we show that $u_{\varepsilon}$'s
have a uniform bound in $W_{\mathrm{loc}}^{2,p}$. Hence we can conclude
that $u$ is also in $W_{\mathrm{loc}}^{2,p}$. The methods employed
here are classical, but to the best of author\textquoteright s knowledge
the results have not appeared elsewhere. Nevertheless, we include
the proofs here for completeness. Then in Theorem \ref{thm: Reg u},
using the uniform bound on $D^{2}u_{\varepsilon}$, we can show that
$u$ belongs to $W_{\mathrm{loc}}^{2,\infty}$. This last step was
made possible by the work of \citet{MR3198649}, and its generalization
by \citet{Indrei-Minne}. Finally, in Appendix \ref{sec: Ridge, Elast, Plast},
we collect some elementary results about the ridge, and the elastic
and plastic regions.

\section{\label{sec: Notation-and-Prelim}Notation and Preliminaries}

First let us introduce some notation. 
\begin{enumerate}
\item $d(x):=\min_{y\in\partial U}|x-y|$ : the Euclidean distance to $\partial U$. 
\item $[x,y],\,]x,y[,\,[x,y[,\,]x,y]$ : the closed, open, and half-open
line segments with endpoints $x,y$.
\item We denote by $C^{\omega}$ the space of analytic functions (or submanifolds);
so in the following when we talk about $C^{k,\alpha}$ regularity
with $k$ greater than some fixed integer, we are also including $C^{\infty}$
and $C^{\omega}$.%
\item We will use the convention of summing over repeated indices.
\end{enumerate}

Recall that the gauge function $\gamma$ satisfies 
\begin{align*}
 & \gamma(rx)=r\gamma(x),\\
 & \gamma(x+y)\le\gamma(x)+\gamma(y),
\end{align*}
for all $x,y\in\mathbb{R}^{n}$ and $r\ge0$. Also, note that as $B_{c}(0)\subseteq K\subseteq B_{C}(0)$
for some $C\ge c>0$, we have 
\[
\frac{1}{C}|x|\le\gamma(x)\le\frac{1}{c}|x|,
\]
for all $x\in\mathbb{R}^{n}$. 

It is well known that for all $x,y\in\mathbb{R}^{n}$, we have 
\begin{equation}
\langle x,y\rangle\leq\gamma(x)\gamma^{\circ}(y).\label{eq: gen Cauchy-Schwartz}
\end{equation}
In fact, more is true and we have 
\begin{equation}
\gamma^{\circ}(y)=\underset{x\ne0}{\max}\frac{\langle x,y\rangle}{\gamma(x)}.\label{eq: gen Cauchy-Schwartz 2}
\end{equation}
For a proof of this, see page 54 of \citep{MR3155183}.

It is easy to see that the the strict convexity of $K$ (which means
that $\partial K$ does not contain any line segment) is equivalent
to the strict convexity of $\gamma$. By homogeneity of $\gamma$,
the latter is equivalent to 
\[
\gamma(x+y)<\gamma(x)+\gamma(y)
\]
when $x\ne cy$ and $y\ne cx$ for any $c\ge0$. 

Moreover, from the definition of $\rho$ we easily obtain (see the
proof of Proposition \ref{prop: equiv})
\begin{equation}
-\gamma(x-y)\le\rho(y)-\rho(x)\le\gamma(y-x),\label{eq: rho Lip}
\end{equation}
for all $x,y\in\R^{n}$. The above inequality also holds if we replace
$\rho,\gamma$ with $\bar{\rho},\bar{\gamma}$. Thus in particular,
$\rho,\bar{\rho}$ are Lipschitz continuous.
\begin{lem}
\label{lem: cont of y}Suppose $x_{i}\in\overline{U}$ converges to
$x\in\overline{U}$, and $y_{i}\in\partial U$ is a (not necessarily
unique) $\rho$-closest point to $x_{i}$.
\begin{enumerate}
\item[\upshape{(a)}] If $y_{i}$ converges to $\tilde{y}\in\partial U$, then $\tilde{y}$
is one of the $\rho$-closest points on $\partial U$ to $x$.
\item[\upshape{(b)}] If $y\in\partial U$ is the unique $\rho$-closest point to $x$,
then $y_{i}$ converges to $y$.
\end{enumerate}
\end{lem}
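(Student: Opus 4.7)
The plan is to use the continuity of $\gamma,\rho,\varphi$ together with the compactness of $\partial U$ (recall $U$ is bounded, so $\partial U$ is compact).

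For part (a), I would start from the defining equality
\[
\rho(x_i)=\gamma(x_i-y_i)+\varphi(y_i),
\]
which holds because $y_i$ is a $\rho$-closest point to $x_i$. Since $\rho$ is Lipschitz by \eqref{eq: rho Lip}, $\gamma$ is continuous (in fact Lipschitz, being sublinear and bounded above by $\tfrac{1}{c}|\cdot|$), and $\varphi$ is continuous, passing to the limit as $i\to\infty$ with $x_i\to x$ and $y_i\to\tilde y\in\partial U$ yields
\[
\rho(x)=\gamma(x-\tilde y)+\varphi(\tilde y).
\]
By the definition of $\rho$ in \eqref{eq: rho} we have $\rho(x)\le\gamma(x-\tilde y)+\varphi(\tilde y)$ for every $\tilde y\in\partial U$, so attaining equality at $\tilde y$ means precisely that $\tilde y$ is a $\rho$-closest point on $\partial U$ to $x$.

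For part (b), I would argue by a standard subsequence argument. The sequence $\{y_i\}$ lies in the compact set $\partial U$, so every subsequence has a further subsequence converging to some $\tilde y\in\partial U$. By part (a), any such $\tilde y$ is a $\rho$-closest point on $\partial U$ to $x$. Since $y$ is assumed to be the unique such point, every convergent subsequence of $\{y_i\}$ converges to $y$, which forces the whole sequence $y_i$ to converge to $y$.

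There is no substantive obstacle here; the only points to handle carefully are verifying that $\partial U$ is compact (immediate from the boundedness of $U$) and invoking continuity of $\gamma,\rho,\varphi$ when passing to the limit. The proof is therefore essentially a direct application of compactness and continuity.
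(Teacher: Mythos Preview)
Your proof is correct and follows essentially the same approach as the paper: part (a) passes the defining equality $\rho(x_i)=\gamma(x_i-y_i)+\varphi(y_i)$ to the limit via continuity, and part (b) uses compactness of $\partial U$ together with (a) to force every subsequential limit to equal the unique $\rho$-closest point $y$. In fact your write-up is slightly more careful, as the paper's displayed chain for (a) omits the $\varphi$ term (writing $\gamma(x-\tilde y)=\lim\rho(x_i)$), which is a minor slip you have avoided.
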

\begin{proof}
This lemma is a simple consequence of the continuity of $\gamma,\rho$,
and compactness of $\partial U$. For (a) we have 
\[
\gamma(x-\tilde{y})=\lim\gamma(x_{i}-y_{i})=\lim\rho(x_{i})=\rho(x).
\]
Hence $\tilde{y}$ is a $\rho$-closest point to $x$.

Now to prove (b) suppose to the contrary that $y_{i}\not\to y$. Then
as $\partial U$ is compact, there is a subsequence $y_{i_{k}}$ that
converges to $z\in\partial U$ where $z\ne y$. Then by (a) $z$ must
be a $\rho$-closest point to $x$, which is in contradiction with
our assumption.
\end{proof}
\begin{assumption}
\label{assu: 1}We assume that the functional $J$ is given by
\[
J[v]=J[v;U]:=\int_{U}F(Dv)+g(v)\,dx,
\]
where $F:\mathbb{R}^{n}\to\mathbb{R}$ and $g:\mathbb{R}\to\mathbb{R}$
 are $C^{2,\bar{\alpha}}$ convex functions satisfying 
\begin{align}
 & -c_{1}|z|^{q}\le g(z)\le c_{2}|z|^{2}, &  & c_{3}|Z|^{2}\le F(Z)\le c_{4}|Z|^{2},\nonumber \\
 & |g'(z)|\le c_{5}(|z|+1), &  & |DF(Z)|\le c_{6}|Z|,\label{eq: Bnds}\\
 & 0\le g''\le c_{7}, &  & c_{8}|\xi|^{2}\le D_{ij}^{2}F(Z)\,\xi_{i}\xi_{j}\le c_{9}|\xi|^{2},\nonumber 
\end{align}
for all $z\in\mathbb{R}$ and $Z,\xi\in\mathbb{R}^{n}$. Here, $c_{i}>0$,
$0<\bar{\alpha}\le1$, and $1\le q<2$. 
\end{assumption}

\begin{rem*}
Note that by our assumption, $F$ is strictly convex, and $F(0)=0$
is its unique global minimum.
\end{rem*}
\begin{rem*}
The constants $c_{1},\dots,c_{9}$ will only be used in the estimates
of Appendix \ref{sec: Local-Optimal-Reg}.
\end{rem*}
\begin{rem*}
Since $W_{K^{\circ},\varphi}$ is a nonempty closed convex set, we
can apply the direct method of the calculus of variations, to conclude
the existence of a unique minimizer $u$. See, for example, the proof
of Theorem 3.30 in \citep{MR2361288}. 
\end{rem*}
\begin{prop}
\label{prop: equiv}Suppose the Assumption \ref{assu: 1} holds. Then,
$u$ is also the minimizer of $J$ over 
\[
W_{\bar{\rho},\rho}=W_{\bar{\rho},\rho}(U):=\{v\in H^{1}(U):-\bar{\rho}\le v\leq\rho\textrm{ a.e., }v=\varphi\textrm{ on }\partial U\}.
\]
\end{prop}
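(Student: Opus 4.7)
The plan is to establish the equivalence via three ingredients: the boundary trace relations $\rho=\varphi$ and $\bar{\rho}=-\varphi$ on $\partial U$, the containment $W_{K^{\circ},\varphi}\subseteq W_{\bar{\rho},\rho}$, and the observation that the unique minimizer over the obstacle set actually respects the gradient constraint. Only the third step is non-trivial; the first two are direct computations.

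For the trace relations, I would take $z=y$ in (\ref{eq: rho}) to get $\rho(y)\le\varphi(y)$, then invoke the upper half of (\ref{eq: phi Lip}) to write $\varphi(y)\le\varphi(z)+\gamma(y-z)$ for every $z\in\partial U$, and minimize over $z$ to obtain the reverse inequality. The statement for $\bar{\rho}$ is symmetric, and the global Lipschitz bound (\ref{eq: rho Lip}) will follow from the subadditivity of $\gamma$ together with the definition of $\rho$ as a pointwise minimum. For the containment, given $v\in W_{K^{\circ},\varphi}$, I would mollify to produce smooth $v_{\varepsilon}$ whose gradients still lie in the convex set $K^{\circ}$, and apply (\ref{eq: gen Cauchy-Schwartz}) along straight-line segments to get $v_{\varepsilon}(x)-v_{\varepsilon}(y)\le\gamma(x-y)$; this passes to $v$ as $\varepsilon\to 0$. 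Restricting $y\in\partial U$ and using $v|_{\partial U}=\varphi$, minimizing over $y$ yields $v\le\rho$; the bound $v\ge-\bar{\rho}$ follows by applying the same argument to $-v$ with $-K$ replacing $K$.

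The main obstacle is the third step. Let $\tilde{u}$ denote the unique minimizer (by strict convexity of $J$) of $J$ over the closed convex nonempty set $W_{\bar{\rho},\rho}$. I will show $\gamma^{\circ}(D\tilde{u})\le 1$ a.e., so $\tilde{u}\in W_{K^{\circ},\varphi}$. Granting this, since $u\in W_{K^{\circ},\varphi}\subseteq W_{\bar{\rho},\rho}$ forces $J[\tilde{u}]\le J[u]$, and $u$ is the unique minimizer on the smaller admissible set, we conclude $\tilde{u}=u$. To establish the gradient bound on $\tilde{u}$, I will use a translation-comparison argument: on the elastic region $E=\{-\bar{\rho}<\tilde{u}<\rho\}$, $\tilde{u}$ solves the uniformly elliptic Euler--Lagrange equation associated with $J$, while on $\partial E$ it coincides with one of $\rho$, $-\bar{\rho}$, or $\varphi$, each of which satisfies $w(x)-w(y)\le\gamma(x-y)$ pointwise. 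The maximum principle applied to the translation difference $\tilde{u}(\cdot+h)-\tilde{u}(\cdot)$ then propagates this bound from $\partial E$ into $E$; sending $h\to 0$ along arbitrary directions yields $\gamma^{\circ}(D\tilde{u})\le 1$ a.e. The technical difficulty lies in justifying the translation argument for a function with only $H^{1}$ a priori regularity; enough interior regularity of $\tilde{u}$ in $E$ (provided by standard elliptic/obstacle-problem theory or by the approximation scheme of Appendix \ref{sec: Local-Optimal-Reg}) is required to carry it out rigorously.
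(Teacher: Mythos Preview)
Your first two ingredients (the trace identities and the inclusion $W_{K^{\circ},\varphi}\subseteq W_{\bar\rho,\rho}$) are correct and coincide with what the paper does.  The divergence is in the third step.  The paper does \emph{not} try to prove directly that the obstacle minimizer $\tilde u$ lies in $W_{K^{\circ},\varphi}$; instead it invokes an external result (from \cite{MR1797872,MR1}) stating that the minimizer of $J$ over $W_{K^{\circ},\varphi}$ is automatically the minimizer over the obstacle class $\{u^{-}\le v\le u^{+}\}$ determined by the extremal elements $u^{\pm}$ of $W_{K^{\circ},\varphi}$.  The remainder of the paper's proof is then purely the identification $u^{+}=\rho$, $u^{-}=-\bar\rho$, which is exactly your steps 1--2.

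Your sketch of step 3 has a genuine gap, and it is not merely a regularity issue.  For the maximum principle to apply to the difference $w(x)=\tilde u(x+h)-\tilde u(x)$ on $E$, you would need $\tilde u(\cdot+h)$ to satisfy the Euler--Lagrange equation at $x$ whenever $x\in E$; but that holds only when $x+h\in E$, so at best you can work on $E\cap(E-h)$, whose boundary is not $\partial E$.  More seriously, even on $\partial E$ the obstacle information does not give the one-sided bound you need: at a point $x$ where $\tilde u(x)=-\bar\rho(x)$ you only get $\tilde u(x+h)-\tilde u(x)\ge -\bar\rho(x+h)+\bar\rho(x)\ge -\gamma(h)$, which is the wrong inequality for an upper bound on $w$.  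So the claimed propagation from $\partial E$ into $E$ does not go through as written.  The equivalence theorems in the cited references use a different mechanism (comparison of $J$-energies after replacing $\tilde u$ by the minimum or maximum of $\tilde u$ with a suitably shifted competitor, exploiting the special form $F(Dv)+g(v)$), not a translation maximum principle; if you want to avoid the citation you would need to reproduce that argument rather than the one you outlined.
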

\begin{rem*}
In fact we can weaken the hypothesis of this theorem, and only assume
that $F,g$ are convex and at least one of them is strictly convex.
See \citep{MR1} for details. 
\end{rem*}
\begin{proof}
As shown in \citep{MR1797872}, \citep{MR1}, $u$ is also the minimizer
of $J$ over 
\[
\{v\in H^{1}(U):u^{-}\le v\leq u^{+}\textrm{ a.e., }v=\varphi\textrm{ on }\partial U\},
\]
where $u^{-},u^{+}\in W_{K^{\circ},\varphi}$ satisfy $u^{-}\le v\le u^{+}$
for all $v\in W_{K^{\circ},\varphi}$. We claim that 
\begin{align*}
 & u^{+}(x)=\rho(x)=\underset{y\in\partial U}{\min}[\gamma(x-y)+\varphi(y)],\\
 & u^{-}(x)=-\bar{\rho}(x)=-\underset{y\in\partial U}{\min}[\gamma(y-x)-\varphi(y)].
\end{align*}
First, let us show that $\rho,-\bar{\rho}\in W_{K^{\circ},\varphi}$.
By (\ref{eq: phi Lip}), for a given $x\in\partial U$ and every $y\in\partial U$
we have 
\begin{align*}
\gamma(x-x)+\varphi(x) & =\varphi(x)\le\gamma(x-y)+\varphi(y),\\
\gamma(x-x)-\varphi(x) & =-\varphi(x)\le\gamma(y-x)-\varphi(y).
\end{align*}
Hence $\rho(x)=\varphi(x)$, and $-\bar{\rho}(x)=\varphi(x)$. Next
let $x,z\in\R^{n}$. Then due to the compactness of $\partial U$,
there is $y\in\partial U$ such that $\rho(z)=\gamma(z-y)+\varphi(y)$.
Therefore we have 
\[
\rho(x)-\rho(z)\le\gamma(x-y)+\varphi(y)-\gamma(z-y)-\varphi(y)\le\gamma(x-z).
\]
Then by Lemma 2.1 of \citep{MR1797872}, $\rho$ is Lipschitz and
$D\rho\in K^{\circ}$ a.e.. The case of $-\bar{\rho}$ is similar. 

Finally, let us show that for an arbitrary $v\in W_{K^{\circ},\varphi}$
we have $-\bar{\rho}\le v\le\rho$ a.e.. Note that $v$ is Lipschitz,
since $K^{\circ}$ is bounded and $\varphi,\partial U$ are Lipschitz.
Then similarly to the proof of Lemma 2.2 of \citep{MR1}, we can mollify
$v$ and use the mean value theorem together with the inequality (\ref{eq: gen Cauchy-Schwartz})
to obtain 
\[
v(x)-v(y)\le\gamma(x-y),
\]
when $[x,y]\subset U$. Using continuity we can allow $y$ to be
on $\partial U$ too. Hence $v(x)\le\gamma(x-y)+\varphi(y)$ whenever
$[x,y[\subset U$. Now let $z\in\partial U$, and consider the line
segment $[x,z]$. This line segment might not be entirely in $U$,
but if we let $y$ to be the closest point to $x$ on $[x,z]\cap\partial U$,
then we must have $[x,y[\subset U$. Then 
\begin{align*}
v(x) & \le\gamma(x-y)+\varphi(y)\\
 & \le\gamma(x-y)+\gamma(y-z)+\varphi(z)=\gamma(x-z)+\varphi(z).
\end{align*}
Hence $v\le\rho$. Similarly we have $-v=-\varphi$ on $\partial U$,
and $D(-v)\in-K^{\circ}=(-K)^{\circ}$ a.e.. Therefore $-v\le d_{-K,-\varphi}=\bar{\rho}$,
or $v\ge-\bar{\rho}$.
\end{proof}
\begin{rem*}
The above proof also shows that $W_{K^{\circ},\varphi}\subset W_{\bar{\rho},\rho}$,
and $-\bar{\rho},\rho\in W_{K^{\circ},\varphi}$. In addition it shows
that $-\bar{\rho}(x)\le\rho(x)$ for all $x$. 
\end{rem*}
\begin{rem*}
Note that as $u$ has bounded gradient, it is Lipschitz continuous.
Thus, for every $x\in U$ (not just for a.e. $x$) we have 
\[
-\bar{\rho}(x)\leq u(x)\leq\rho(x).
\]
As a result for every $x\in\partial U$ we have $u(x)=\varphi(x)$.
\end{rem*}

\subsection{\label{subsec: Reg gaug}Regularity of the gauge function}

Suppose that $\partial K$ is $C^{k,\alpha}$ $(k\ge1\,,\,0\le\alpha\le1)$.
Let us show that as a result, $\gamma$ is $C^{k,\alpha}$ on $\mathbb{R}^{n}-\{0\}$.
Let $r=\sigma(\theta)$ for $\theta\in\mathbb{S}^{n-1}$, be the equation
of $\partial K$ in polar coordinates. Then $\sigma$ is positive
and $C^{k,\alpha}$. To see this note that locally, $\partial K$
is given by a $C^{k,\alpha}$ equation $f(x)=0$. On the other hand
we have $x=rX(\theta)$, for some smooth function $X$. Hence we have
$f(rX(\theta))=0$; and the derivative of this expression with respect
to $r$ is 
\[
\langle X(\theta),Df(rX(\theta))\rangle=\frac{1}{r}\langle x,Df(x)\rangle.
\]
But this is nonzero since $Df$ is orthogonal to $\partial K$, and
$x$ cannot be tangent to $\partial K$ (otherwise $0$ cannot be
in the interior of $K$, as $K$ lies on one side of its supporting
hyperplane at $x$). Thus we get the desired by the Implicit Function
Theorem. Now, it is straightforward to check that for a nonzero point
in $\mathbb{R}^{n}$ with polar coordinates $(s,\phi)$ we have 
\[
\gamma((s,\phi))=\frac{s}{\sigma(\phi)}.
\]
This formula easily gives the smoothness of $\gamma$. On the other
hand, note that as $\partial K=\{\gamma=1\}$ and $D\gamma\ne0$ by
(\ref{eq: g0 (Dg)=00003D1}), $\partial K$ is as smooth as $\gamma$.

Now, suppose in addition that $K$ is strictly convex. Then $\gamma$
is strictly convex too. By Remark 1.7.14 and Theorem 2.2.4 of \citep{MR3155183},
$K^{\circ}$ is also strictly convex and its boundary is $C^{1}$.
Therefore $\gamma^{\circ}$ is strictly convex, and it is $C^{1}$
on $\mathbb{R}^{n}-\{0\}$. Hence by Corollary 1.7.3 of \citep{MR3155183},
for $x\ne0$ we have (Notice that the strict convexity of $K$ is
only needed for the existence of $D\gamma^{\circ}$.) 
\begin{eqnarray}
D\gamma(x)\in\partial K^{\circ}, &  & D\gamma^{\circ}(x)\in\partial K,\label{eq: g0 (Dg)=00003D1}
\end{eqnarray}
or equivalently 
\[
\gamma^{\circ}(D\gamma)=1,\qquad\gamma(D\gamma^{\circ})=1.
\]
In particular $D\gamma,D\gamma^{\circ}$ are nonzero on $\mathbb{R}^{n}-\{0\}$.

Let us also suppose that $k\ge2$, and the principal curvatures of
$\partial K$ are positive everywhere. Then $K$ is strictly convex.
We can also show that $\gamma^{\circ}$ is $C^{k,\alpha}$ on $\mathbb{R}^{n}-\{0\}$.
To see this, let $n_{K}:\partial K\to\mathbb{S}^{n-1}$ be the Gauss
map, i.e. $n_{K}(y)$ is the outward unit normal to $\partial K$
at $y$. Then $n_{K}$ is $C^{k-1,\alpha}$ and its derivative is
an isomorphism at the points with positive principal curvatures, i.e.
everywhere. Hence $n_{K}$ is locally invertible with a $C^{k-1,\alpha}$
inverse $n_{K}^{-1}$, around any point of $\mathbb{S}^{n-1}$. Now
note that as it is well known, $\gamma^{\circ}$ equals the support
function of $K$, i.e. 
\[
\gamma^{\circ}(x)=\sup\{\langle x,y\rangle:y\in K\}.
\]
Thus as shown in page 115 of \citep{MR3155183}, for $x\ne0$ we have
\[
D\gamma^{\circ}(x)=n_{K}^{-1}(\frac{x}{|x|}).
\]
Which gives the desired result. As a consequence, since $\partial K^{\circ}=\{\gamma^{\circ}=1\}$
and $D\gamma^{\circ}\ne0$ by (\ref{eq: g0 (Dg)=00003D1}), $\partial K^{\circ}$
is $C^{k,\alpha}$ too. Furthermore, as shown on page 120 of \citep{MR3155183},
the principal curvatures of $\partial K^{\circ}$ are also positive
everywhere.

Let us recall a few more properties of $\gamma,\gamma^{\circ}$. Since
they are positively 1-homogeneous, $D\gamma,D\gamma^{\circ}$ are
positively 0-homogeneous, and $D^{2}\gamma,D^{2}\gamma^{\circ}$ are
positively $(-1)$-homogeneous, i.e. 
\begin{eqnarray}
\gamma(tx)=t\gamma(x), & D\gamma(tx)=D\gamma(x), & D^{2}\gamma(tx)=\frac{1}{t}D^{2}\gamma(x),\nonumber \\
\gamma^{\circ}(tx)=t\gamma^{\circ}(x), & D\gamma^{\circ}(tx)=D\gamma^{\circ}(x), & D^{2}\gamma^{\circ}(tx)=\frac{1}{t}D^{2}\gamma^{\circ}(x),\label{eq: homog}
\end{eqnarray}
for $x\ne0$ and $t>0$. As a result, using Euler's theorem on homogeneous
functions we get 
\begin{eqnarray}
\langle D\gamma(x),x\rangle=\gamma(x), &  & D^{2}\gamma(x)\,x=0,\nonumber \\
\langle D\gamma^{\circ}(x),x\rangle=\gamma^{\circ}(x), &  & D^{2}\gamma^{\circ}(x)\,x=0,\label{eq: Euler formula}
\end{eqnarray}
for $x\ne0$. Here $D^{2}\gamma(x)\,x$ is the action of the matrix
$D^{2}\gamma(x)$ on the vector $x$. We also recall the following
fact from \citep{MR2336304}, that for $x\ne0$ 
\begin{eqnarray}
D\gamma^{\circ}(D\gamma(x))=\frac{x}{\gamma(x)}, &  & D\gamma(D\gamma^{\circ}(x))=\frac{x}{\gamma^{\circ}(x)}.\label{eq: Dg(Dg0)}
\end{eqnarray}
Finally let us mention that by Corollary 2.5.2 of \citep{MR3155183},
when $x\ne0$ the eigenvalues of $D^{2}\gamma(x)$ are $0$ with the
corresponding eigenvector $x$, and $\frac{1}{|x|}$ times the principal
radii of curvature of $\partial K^{\circ}$ at the unique point that
has $x$ as an outward normal vector. Remember that the principal
radii of curvature are the reciprocals of the principal curvatures.
Thus by our assumption, the eigenvalues of $D^{2}\gamma(x)$ are all
positive except for one $0$. We have a similar characterization of
the eigenvalues of $D^{2}\gamma^{\circ}(x)$.\textcolor{red}{}

\section{\label{sec: Reg Opstacles}Regularity of the Obstacles}

In this section, we are going to study the singularities of the functions
$\rho,\bar{\rho}$. Since $\rho,\bar{\rho}$ are viscosity solutions
of Hamilton-Jacobi equations of type (\ref{eq: H-J eq}), we can regard
this section as the study of the regularity of solutions to some Hamilton-Jacobi
equations. We know that $\rho,\bar{\rho}$ are Lipschitz functions.
We want to characterize the set over which they are more regular.
In order to do that, we need to impose some additional restrictions
on $K,U$ and $\varphi$.
\begin{assumption}
\label{assu: 2}Suppose that $k\ge2$ is an integer, and $0\le\alpha\le1$.
We assume that 
\begin{enumerate}
\item[\upshape{(a)}] $K\subset\R^{n}$ is a compact convex set whose interior contains
the origin. In addition, $\partial K$ is $C^{k,\alpha}$, and its
principal curvatures are positive at every point.
\item[\upshape{(b)}] $U\subset\R^{n}$ is a bounded open set, and $\partial U$ is $C^{k,\alpha}$.
\item[\upshape{(c)}] $\varphi:\R^{n}\to\R$ is a $C^{k,\alpha}$ function, such that $\gamma^{\circ}(D\varphi)<1$.
\end{enumerate}
Finally, in this section, we restrict the domain of $\rho,\bar{\rho}$
to $\overline{U}$. So that we can extend them to have different values
beyond $\partial U$.
\end{assumption}
\begin{rem*}
As shown in Subsection \ref{subsec: Reg gaug}, the above assumption
implies that $K,\gamma$ are strictly convex. In addition, $K^{\circ},\gamma^{\circ}$
are strictly convex, and $\partial K^{\circ},\gamma^{\circ}$ are
also $C^{k,\alpha}$. Furthermore, the principal curvatures of $\partial K^{\circ}$
are also positive at every point. Similar conclusions obviously hold
for $-K,-\varphi$ and $(-K)^{\circ}=-K^{\circ}$ too. Hence in the
sequel, whenever we prove a property for $\rho$, it holds for $\bar{\rho}$
too.
\end{rem*}
\begin{rem*}
It is obvious that under the above assumption, $\varphi$ will satisfy
the Lipschitz property (\ref{eq: phi Lip}) and its strict version
given in (\ref{eq: phi strct Lip}), as shown for example in Lemma
2.2 of \citep{MR1}. Furthermore, the above assumption implies the
bound (\ref{eq: bd D2 g}) on the weak second derivative of $\gamma$,
as we have shown in Lemma \ref{lem: C2 -> ass 3}. Therefore the above
assumption implies the Assumption \ref{assu: 3} of Appendix \ref{sec: Local-Optimal-Reg}.
Hence under the above assumption, all the results of the appendices
are true.
\end{rem*}
Suppose $\nu$ is the inward unit normal to $\partial U$. By the
above assumption, $\nu$ is a $C^{k-1,\alpha}$ function of $y\in\partial U$.
It is well known (see {[}\citealp{MR1814364}, Section 14.6{]}) that
we have $\nu=Dd$, where $d$ is the Euclidean distance to $\partial U$.
So $Dd$ is a $C^{k-1,\alpha}$ extension of $\nu$ to a neighborhood
of $\partial U$. We always assume that we are working with this extension
of $\nu$.
\begin{lem}
Suppose the Assumption \ref{assu: 2} holds. Then for every $y\in\partial U$
there is a unique scalar $\lambda(y)>0$ such that 
\begin{equation}
\gamma^{\circ}\big(D\varphi(y)+\lambda(y)\nu(y)\big)=1.\label{eq: lambda}
\end{equation}
Furthermore, $\lambda:\partial U\to(0,\infty)$ is a $C^{k-1,\alpha}$
function.
\end{lem}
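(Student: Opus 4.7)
The plan is to reduce everything to analysis of the one-variable function
\[
h_y(\lambda):=\gamma^{\circ}\bigl(D\varphi(y)+\lambda\,\nu(y)\bigr),\qquad\lambda\ge 0,
\]
for each fixed $y\in\partial U$. First I would establish existence and uniqueness of $\lambda(y)$. By assumption (c), $h_y(0)=\gamma^{\circ}(D\varphi(y))<1$. Subadditivity of $\gamma^{\circ}$ gives $h_y(\lambda)\ge \lambda\gamma^{\circ}(\nu(y))-\gamma^{\circ}(-D\varphi(y))$, and since $K^{\circ}$ is bounded, $\gamma^{\circ}$ is bounded below by a positive multiple of $|\cdot|$; in particular $\gamma^{\circ}(\nu(y))>0$, so $h_y(\lambda)\to\infty$. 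Continuity then yields at least one $\lambda(y)>0$ with $h_y(\lambda(y))=1$. For uniqueness I use convexity of $h_y$: if $0<\lambda_1<\lambda_2$ both gave value $1$, writing $\lambda_1=\tfrac{\lambda_2-\lambda_1}{\lambda_2}\cdot 0+\tfrac{\lambda_1}{\lambda_2}\cdot\lambda_2$ and applying convexity yields
\[
h_y(\lambda_1)\le \tfrac{\lambda_2-\lambda_1}{\lambda_2}h_y(0)+\tfrac{\lambda_1}{\lambda_2}h_y(\lambda_2)<1,
\]
contradicting $h_y(\lambda_1)=1$.

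For regularity, I would apply the implicit function theorem to $G(y,\lambda):=\gamma^{\circ}(D\varphi(y)+\lambda\nu(y))-1$. By Subsection~\ref{subsec: Reg gaug}, $\gamma^{\circ}$ is $C^{k,\alpha}$ on $\R^{n}-\{0\}$; $D\varphi$ is $C^{k-1,\alpha}$ by assumption (c); and the inward unit normal $\nu=Dd$ is $C^{k-1,\alpha}$ because $\partial U$ is $C^{k,\alpha}$. At a solution point the argument of $\gamma^{\circ}$ equals $\mu(y)$, which is nonzero since $\gamma^{\circ}(\mu(y))=1$, so $G$ is $C^{k-1,\alpha}$ on a neighborhood of the graph of $\lambda$.

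The main obstacle, and the step that requires real input, is verifying the nondegeneracy needed for the implicit function theorem, namely
\[
\partial_{\lambda}G(y,\lambda(y))=\langle D\gamma^{\circ}(\mu(y)),\nu(y)\rangle\ne 0.
\]
Geometrically this expresses the fact that the ray $\{D\varphi(y)+\lambda\nu(y):\lambda\ge 0\}$ crosses $\partial K^{\circ}$ transversally at $\mu(y)$. I would prove it by exploiting the convexity of $h_y$ one more time: since $h_y(0)<h_y(\lambda(y))=1$, the mean value theorem produces some $\lambda_{0}\in(0,\lambda(y))$ with $h_y'(\lambda_0)>0$; because $h_y$ is convex, $h_y'$ is nondecreasing, so $h_y'(\lambda(y))\ge h_y'(\lambda_0)>0$, which is exactly the required inequality. (Equivalently, $D\gamma^{\circ}(\mu(y))$ is a positive multiple of the outward normal to $\partial K^{\circ}$ at $\mu(y)$, and $\nu(y)$ points out of $K^{\circ}$ from the interior point $D\varphi(y)$.)

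Once nondegeneracy is established, the implicit function theorem yields a local $C^{k-1,\alpha}$ solution $\tilde\lambda$ to $G(y,\tilde\lambda)=0$ near each $y_{0}\in\partial U$; the uniqueness established above forces $\tilde\lambda=\lambda$ locally, and covering the compact manifold $\partial U$ by such neighborhoods upgrades this to global $C^{k-1,\alpha}$ regularity. Positivity of $\lambda$ is built into the construction.
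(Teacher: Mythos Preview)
Your proof is correct and follows essentially the same approach as the paper: both use that $D\varphi(y)$ lies in the interior of $K^{\circ}$ to obtain existence and uniqueness of $\lambda(y)$, and both invoke the implicit function theorem for the $C^{k-1,\alpha}$ regularity, with the nondegeneracy condition $\langle D\gamma^{\circ}(\mu(y)),\nu(y)\rangle\ne 0$ coming from the fact that the ray $\lambda\mapsto D\varphi(y)+\lambda\nu(y)$ passes through the interior of $K^{\circ}$ and hence cannot be tangent to $\partial K^{\circ}$. Your analytic phrasing (convexity of $h_y$ plus the mean value theorem) is just a reformulation of the paper's geometric argument, and in fact yields the sharper conclusion $\langle D\gamma^{\circ}(\mu(y)),\nu(y)\rangle>0$, which the paper establishes separately in Lemma~\ref{lem: K-normal}.
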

\begin{proof}
Since $\gamma^{\circ}(D\varphi)<1$, the vector $D\varphi(y)$ is
in the interior of $K^{\circ}$, and therefore the ray emanating from
it in the direction of $\nu(y)$ must intersect $\partial K^{\circ}$.
Also, this ray cannot intersect $\partial K^{\circ}$ in more than
one point due to the convexity of $K^{\circ}$. So there is a unique
$\lambda(y)>0$ such that $D\varphi(y)+\lambda(y)\nu(y)\in\partial K^{\circ}$,
as desired.

By the implicit function theorem we can show that $\lambda(y)$ is
a $C^{k-1,\alpha}$ function of $y$. Consider the $C^{k-1,\alpha}$
function 
\[
(y,\lambda)\mapsto\gamma^{\circ}\big(D\varphi(y)+\lambda\nu(y)\big).
\]
We need the derivative with respect to $\lambda$ to be nonzero, i.e.
we need $\langle D\gamma^{\circ}(D\varphi+\lambda\nu),\nu\rangle\ne0$
when $\gamma^{\circ}(D\varphi+\lambda\nu)=1$. But $D\gamma^{\circ}(D\varphi+\lambda\nu)$
is normal to the surface of $\partial K^{\circ}$ at the point $D\varphi+\lambda\nu$.
On the other hand, the line $t\mapsto D\varphi+t\nu$ that passes
through $D\varphi+\lambda\nu$ cannot be tangent to $\partial K^{\circ}$,
since it intersects the interior of the convex set $K^{\circ}$ at
$D\varphi$. Hence $D\gamma^{\circ}(D\varphi+\lambda\nu)$ cannot
be orthogonal to the direction of the line, i.e to $\nu$. Thus we
get the desired.
\end{proof}
Remember that for $y\in\partial U$ we have 
\[
\mu(y)=D\varphi(y)+\lambda(y)\nu(y).
\]
By the above lemma, we know that $\mu$ is a $C^{k-1,\alpha}$ function,
and $\gamma^{\circ}(\mu)=1$. In particular, $\mu$ is always nonzero.
In addition, it is easy to compute $D\mu$. First we need to compute
$D\lambda$ using implicit function theorem. We have 
\begin{align*}
0=D(\gamma^{\circ}(D\varphi+\lambda\nu)) & =D\gamma^{\circ}(\mu)\big(D^{2}\varphi+\lambda D\nu+\nu\otimes D\lambda\big)\\
 & =D\gamma^{\circ}(\mu)D^{2}\varphi+\lambda D\gamma^{\circ}(\mu)D\nu+\langle D\gamma^{\circ}(\mu),\nu\rangle D\lambda,
\end{align*}
which gives us $D\lambda$. Thus we get 
\begin{align*}
D\mu & =D^{2}\varphi+\lambda D\nu+\nu\otimes D\lambda\\
 & =D^{2}\varphi+\lambda D\nu-\frac{1}{\langle D\gamma^{\circ}(\mu),\nu\rangle}\nu\otimes\big(D\gamma^{\circ}(\mu)D^{2}\varphi+\lambda D\gamma^{\circ}(\mu)D\nu\big).
\end{align*}
Note that $\langle D\gamma^{\circ}(\mu),\nu\rangle\ne0$ as shown
in the above proof. To simplify the above expression, recall that
we have 
\[
X=\frac{1}{\langle D\gamma^{\circ}(\mu),\nu\rangle}D\gamma^{\circ}(\mu)\otimes\nu.
\]
Note that if $w$ is orthogonal to $\nu$, i.e. if it is tangent to
$\partial U$, then $(I-X)w=w-0=w$. In addition we have $(I-X)D\gamma^{\circ}(\mu)=D\gamma^{\circ}(\mu)-D\gamma^{\circ}(\mu)=0$.
So $I-X$ is the projection on the tangent space to $\partial U$
parallel to $D\gamma^{\circ}(\mu)$. (Note that $D\gamma^{\circ}(\mu)$
is not tangent to $\partial U$ due to $\langle D\gamma^{\circ}(\mu),\nu\rangle\ne0$.)
Now it is easy to check that 
\begin{equation}
D\mu=(I-X^{T})(D^{2}\varphi+\lambda D^{2}d).\label{eq: D mu}
\end{equation}
Here we also used the fact that $\nu=Dd$. Let us recall that the
eigenvalues of $D^{2}d(y)=D\nu(y)$ are minus the principal curvatures
of $\partial U$ at $y$, and $0$. For the details see {[}\citealp{MR1814364},
Section 14.6{]}.
\begin{lem}
\label{lem: K-normal}Suppose the Assumption \ref{assu: 2} holds.
Then for every $y\in\partial U$ we have 
\begin{equation}
\big\langle D\gamma^{\circ}(\mu(y)),\nu(y)\big\rangle>0.\label{eq: Dg0 . n > 0}
\end{equation}
Furthermore, let $x\in U$, and suppose $y$ is one of the $\rho$-closest
points to $x$ on $\partial U$. Then we have 
\begin{equation}
\frac{x-y}{\gamma(x-y)}=D\gamma^{\circ}(\mu(y)).\label{eq: K-normal}
\end{equation}
Or equivalently 
\begin{equation}
x=y+\big(\rho(x)-\varphi(y)\big)\,D\gamma^{\circ}(\mu(y)).\label{eq: parametrize by rho}
\end{equation}
\end{lem}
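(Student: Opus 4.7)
The plan is to handle the two assertions separately, both times reducing to Euler's relation (\ref{eq: Euler formula}), the generalized Cauchy-Schwarz inequality (\ref{eq: gen Cauchy-Schwartz}), and the duality (\ref{eq: Dg(Dg0)}). For the first assertion, I would just compute: since $\mu=D\varphi+\lambda\nu$, Euler's relation gives
\begin{equation*}
1=\gamma^{\circ}(\mu)=\langle D\gamma^{\circ}(\mu),\mu\rangle=\langle D\gamma^{\circ}(\mu),D\varphi\rangle+\lambda\langle D\gamma^{\circ}(\mu),\nu\rangle,
\end{equation*}
while (\ref{eq: gen Cauchy-Schwartz}) combined with $\gamma(D\gamma^{\circ}(\mu))=1$ from (\ref{eq: g0 (Dg)=00003D1}) yields $\langle D\gamma^{\circ}(\mu),D\varphi\rangle\le\gamma^{\circ}(D\varphi)<1$. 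Dividing by $\lambda>0$ gives $\langle D\gamma^{\circ}(\mu),\nu\rangle>0$.

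For the second assertion, the two formulas (\ref{eq: K-normal}) and (\ref{eq: parametrize by rho}) are equivalent via $\gamma(x-y)=\rho(x)-\varphi(y)$ (by the definition of a $\rho$-closest point), so I focus on (\ref{eq: K-normal}). Because $y$ minimizes $g(z):=\gamma(x-z)+\varphi(z)$ over the smooth submanifold $\partial U$, and $g$ is differentiable at $y$ (as $x\ne y$ and $\gamma$ is smooth away from the origin), the tangential component of $Dg(y)=-D\gamma(x-y)+D\varphi(y)$ vanishes. Hence there exists $\lambda'\in\R$ with
\begin{equation*}
D\gamma(x-y)=D\varphi(y)+\lambda'\nu(y);
\end{equation*}
applying $\gamma^{\circ}$ and using (\ref{eq: g0 (Dg)=00003D1}) gives $\gamma^{\circ}(D\varphi(y)+\lambda'\nu(y))=1$. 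Since $D\varphi(y)$ lies in the interior of $K^{\circ}$, the line $\{D\varphi(y)+t\nu(y)\}_{t\in\R}$ meets $\partial K^{\circ}$ in exactly two points, one with $t=\lambda(y)>0$ and one with $t=-\tilde{\lambda}(y)<0$, so $\lambda'\in\{\lambda(y),-\tilde\lambda(y)\}$. Once the negative root is ruled out, the lemma follows: $D\gamma(x-y)=\mu(y)$, and by (\ref{eq: Dg(Dg0)}), $(x-y)/\gamma(x-y)=D\gamma^{\circ}(D\gamma(x-y))=D\gamma^{\circ}(\mu(y))$.

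The main obstacle is selecting the positive root; the key idea is that the negative root forces the segment $[y,x]$ to leave $\overline{U}$ near $y$, producing an extra boundary crossing that violates the strict Lipschitz property of $\varphi$ inherited from $\gamma^{\circ}(D\varphi)<1$. Suppose, for contradiction, that $\lambda'<0$. Repeating the computation from part 1 with $\tilde{\mu}:=D\gamma(x-y)$ in place of $\mu$ gives $\lambda'\langle D\gamma^{\circ}(\tilde{\mu}),\nu(y)\rangle=1-\langle D\gamma^{\circ}(\tilde{\mu}),D\varphi(y)\rangle>0$, whence $\langle D\gamma^{\circ}(\tilde{\mu}),\nu(y)\rangle<0$. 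Rewriting $D\gamma^{\circ}(\tilde{\mu})=(x-y)/\gamma(x-y)$ via (\ref{eq: Dg(Dg0)}), this yields $\langle x-y,\nu(y)\rangle<0$. Because $\partial U$ is $C^{2}$ and $\nu$ is the inward normal, a local graph representation of $\partial U$ at $y$ shows that the linear outward push $\langle x-y,\nu(y)\rangle s$ dominates the $O(s^{2})$ curvature correction, so $y+s(x-y)\notin\overline{U}$ for all small $s>0$. Since $x\in U$, the segment $[y,x]$ must re-enter $\overline{U}$; let $z=y+s_{*}(x-y)\in\partial U$, $s_{*}\in(0,1)$, be the first re-entry point. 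Combining $\rho(x)\le\gamma(x-z)+\varphi(z)=(1-s_{*})\gamma(x-y)+\varphi(z)$ with $\rho(x)=\gamma(x-y)+\varphi(y)$ gives $s_{*}\gamma(x-y)\le\varphi(z)-\varphi(y)$. But integrating $D\varphi$ along $[y,z]$ and using (\ref{eq: gen Cauchy-Schwartz}) together with the strict bound $\gamma^{\circ}(D\varphi)<1$ yields $\varphi(z)-\varphi(y)<\gamma(z-y)=s_{*}\gamma(x-y)$, a contradiction. The case $\lambda'=0$ is immediate, since then $\gamma^{\circ}(D\varphi)=\gamma^{\circ}(D\gamma(x-y))=1$, contradicting $\gamma^{\circ}(D\varphi)<1$. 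Hence $\lambda'=\lambda(y)$, completing the proof.
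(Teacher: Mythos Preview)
Your proof is correct and uses essentially the same ingredients as the paper, but with a cleaner first step and a slight reorganization of the second.

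For (\ref{eq: Dg0 . n > 0}), the paper argues geometrically: since $D\varphi=\mu-\lambda\nu$ lies in the interior of $K^{\circ}$, the ray $t\mapsto\mu-t\nu$ enters the interior, hence cannot be tangent to $\partial K^{\circ}$ and must make a positive angle with the outward normal $D\gamma^{\circ}(\mu)$. Your algebraic route via Euler's relation $1=\langle D\gamma^{\circ}(\mu),D\varphi\rangle+\lambda\langle D\gamma^{\circ}(\mu),\nu\rangle$ and the strict bound $\langle D\gamma^{\circ}(\mu),D\varphi\rangle\le\gamma^{\circ}(D\varphi)<1$ is shorter and more direct.

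For (\ref{eq: K-normal}), both proofs set up the Lagrange multiplier $D\gamma(x-y)=D\varphi(y)+\lambda'\nu(y)$ and rule out $\lambda'\le 0$ using the same two facts: the Euler/Cauchy--Schwarz computation and the strict Lipschitz property (\ref{eq: phi strct Lip}). The paper quotes Lemma~\ref{lem: segment to the closest pt}(b) for $]x,y[\subset U$, hence $\langle\nu(y),x-y\rangle\ge 0$, and then derives a contradiction from Euler/Cauchy--Schwarz when $\lambda'\le 0$. You run the implication in the opposite direction: assume $\lambda'<0$, deduce $\langle\nu(y),x-y\rangle<0$ via Euler/Cauchy--Schwarz, and then obtain the boundary crossing $z\in\,]y,x[\cap\partial U$ whose existence contradicts (\ref{eq: phi strct Lip}); this last step is exactly the content of Lemma~\ref{lem: segment to the closest pt}(b), proved inline. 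The two arguments are logically equivalent; citing the appendix lemma, as the paper does, is somewhat tidier.
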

\begin{proof}
First note that $\mu\in\partial K^{\circ}$, since $\gamma^{\circ}(\mu)=1$.
Hence $D\gamma^{\circ}(\mu)$ is the outward normal to $\partial K^{\circ}$
at $\mu$. On the other hand, $D\varphi=\mu-\lambda\nu$ belongs to
the ray passing through $\mu$ in the direction $-\nu$, because $\lambda>0$.
But we know that $D\varphi$ is in the interior of $K^{\circ}$, since
$\gamma^{\circ}(D\varphi)<1$. Thus the ray $t\mapsto\mu-t\nu$ for
$t>0$, passes through the interior of $K^{\circ}$. Therefore this
ray and $K^{\circ}$ must lie on the same side of the tangent space
to $\partial K^{\circ}$ at $\mu$, because $K^{\circ}$ is strictly
convex. Hence we must have $\langle D\gamma^{\circ}(\mu),\nu\rangle=-\langle D\gamma^{\circ}(\mu),-\nu\rangle>0$,
as desired. 

Now let $\mathrm{z}\mapsto Y(\mathrm{z})$ be a smooth parametrization
of $\partial U$ around $Y(0)=y$. Then due to $\rho$'s definition,
the function $\varphi(Y(\mathrm{z}))+\gamma(x-Y(\mathrm{z}))$ has
a minimum at $\mathrm{z}=0$. Hence 
\[
0=D_{\mathrm{z}_{j}}\big[\varphi(Y(\mathrm{z}))+\gamma(x-Y(\mathrm{z}))\big]=\sum_{i\le n}D_{\mathrm{z}_{j}}Y^{i}\big[D_{i}\varphi(y)-D_{i}\gamma(x-y)\big].
\]
Thus $D\varphi(y)-D\gamma(x-y)$ is orthogonal to every $D_{\mathrm{z}_{j}}Y$
for every $j$, and therefore it is orthogonal to $\partial U$. Hence
we have $D\varphi(y)-D\gamma(x-y)=c\nu(y)$ for some scalar $c$.
First, let us show that $c<0$. Suppose to the contrary that $c\ge0$.
Note that $\langle\nu(y),x-y\rangle\ge0$, because by Lemma \ref{lem: segment to the closest pt}
we know that $[x,y[\subset U$. Therefore by (\ref{eq: Euler formula})
and (\ref{eq: gen Cauchy-Schwartz}) we have 
\begin{align*}
\gamma(x-y) & =\langle D\gamma(x-y),x-y\rangle=\langle D\varphi(y)-c\nu(y),x-y\rangle\\
 & =\langle D\varphi(y),x-y\rangle-c\langle\nu(y),x-y\rangle\\
 & \le\langle D\varphi(y),x-y\rangle\le\gamma^{\circ}(D\varphi)\gamma(x-y)<\gamma(x-y),
\end{align*}
which is a contradiction. So $c<0$.

On the other hand we know that $\gamma^{\circ}(D\gamma)=1$. Thus
we must have 
\[
\gamma^{\circ}(D\varphi(y)-c\nu(y))=1.
\]
Therefore by the previous lemma $-c=\lambda(y)$, since $-c>0$. Hence
we obtain 
\[
D\gamma(x-y)=D\varphi(y)-c\nu(y)=D\varphi(y)+\lambda(y)\nu(y)=\mu(y).
\]
Now if we apply $D\gamma^{\circ}$ to both sides of the above equation,
then by (\ref{eq: Dg(Dg0)}) we obtain 
\[
\frac{x-y}{\gamma(x-y)}=D\gamma^{\circ}(D\gamma(x-y))=D\gamma^{\circ}(\mu(y)),
\]
as desired. The other equation follows immediately, since by (\ref{eq: rho})
we know that $\gamma(x-y)=\rho(x)-\varphi(y)$.
\end{proof}
\begin{prop}
\label{prop: D rho}Suppose the Assumption \ref{assu: 2} holds. Let
$x\in U$. Then $\rho$ is differentiable at $x$ if and only if $x\in U-R_{\rho,0}$.
And in that case we have 
\[
D\rho(x)=D\gamma(x-y)=\mu(y),
\]
where $y$ is the unique $\rho$-closest point to $x$ on $\partial U$.
In particular we have $D\rho(x)\ne0$.
\end{prop}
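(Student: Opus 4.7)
\medskip

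The plan is to establish the two implications separately, using the gauge-formula for $D\gamma$ already furnished by Lemma~\ref{lem: K-normal} so that the only real content to be verified is the differentiability statement.

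First I will show that if $x\in U\setminus R_{\rho,0}$, i.e.\ if $x$ admits a unique $\rho$-closest point $y\in\partial U$, then $\rho$ is differentiable at $x$ with $D\rho(x)=D\gamma(x-y)$. Since $x\in U$ is open and $y\in\partial U$, we have $x-y\neq 0$, so by Subsection~\ref{subsec: Reg gaug} $\gamma$ is $C^1$ on a neighborhood of $x-y$. For $z$ near $x$, let $y_z\in\partial U$ be any $\rho$-closest point to $z$; by Lemma~\ref{lem: cont of y}(b), $y_z\to y$ as $z\to x$. Using $\rho(z)\le\gamma(z-y)+\varphi(y)$ together with $\rho(x)=\gamma(x-y)+\varphi(y)$ gives the upper estimate
\[
\rho(z)-\rho(x)\;\le\;\gamma(z-y)-\gamma(x-y)\;=\;D\gamma(x-y)\cdot(z-x)+o(|z-x|).
\]
For the matching lower estimate, I combine $\rho(z)=\gamma(z-y_z)+\varphi(y_z)$ with the inequality $\rho(x)\le\gamma(x-y_z)+\varphi(y_z)$ to obtain
\[
\rho(z)-\rho(x)\;\ge\;\gamma(z-y_z)-\gamma(x-y_z)\;=\;D\gamma(\xi_z)\cdot(z-x)
\]
for some point $\xi_z$ on the segment $[x-y_z,\,z-y_z]$, by the mean value theorem. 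Because $y_z\to y$ we have $\xi_z\to x-y$, and by continuity of $D\gamma$ at $x-y$ this yields $D\gamma(\xi_z)=D\gamma(x-y)+o(1)$, so the lower bound also equals $D\gamma(x-y)\cdot(z-x)+o(|z-x|)$. Sandwiching the two estimates gives differentiability with $D\rho(x)=D\gamma(x-y)$, and Lemma~\ref{lem: K-normal} then identifies this with $\mu(y)$.

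Next I will show the converse: if $\rho$ is differentiable at $x$, then $x$ has a unique $\rho$-closest point. Suppose, for contradiction, that $y_1\neq y_2$ are both $\rho$-closest points to $x$. For each $i$, the function $z\mapsto\gamma(z-y_i)+\varphi(y_i)-\rho(z)$ is nonnegative on $U$ and vanishes at $z=x$, so it attains its minimum there. Since both $\gamma$ (at $x-y_i\neq0$) and $\rho$ (by hypothesis) are differentiable at $x$, the first-order condition gives $D\gamma(x-y_1)=D\rho(x)=D\gamma(x-y_2)$, and hence by Lemma~\ref{lem: K-normal} we have $\mu(y_1)=\mu(y_2)$. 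Applying $D\gamma^\circ$ and using the identity $\frac{x-y_i}{\gamma(x-y_i)}=D\gamma^\circ(\mu(y_i))$ from the same lemma, I conclude that $x-y_1$ and $x-y_2$ are positive multiples of the same vector $D\gamma^\circ(\mu(y_1))$. Writing $t_i:=\gamma(x-y_i)>0$, I may assume $t_1<t_2$, whence $y_1$ lies in the open segment $]x,y_2[$. This segment is contained in $U$ (this is the content of the segment lemma referenced in the proof of Lemma~\ref{lem: K-normal}), which contradicts $y_1\in\partial U$ since $U$ is open. Hence the $\rho$-closest point to $x$ is unique.

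Finally, $D\rho(x)=\mu(y)\neq0$ because $\gamma^\circ(\mu(y))=1$ forces $\mu(y)\neq0$. The main technical point is the lower estimate in the first paragraph: the naive bound $\rho(z)\ge\gamma(z-y)+\varphi(y)-$(something) is not directly available, and one must instead feed the minimizer $y_z$ for the competitor $z$ back into the inequality $\rho(x)\le\gamma(x-y_z)+\varphi(y_z)$, then rely on $y_z\to y$ (Lemma~\ref{lem: cont of y}(b)) and the $C^1$ regularity of $\gamma$ at $x-y\neq 0$ to move $D\gamma$ through the limit.
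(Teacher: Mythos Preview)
Your proof is correct. It differs from the paper's in that you give direct, self-contained arguments for both implications, whereas the paper dispatches them by citation: for the direction ``unique $\rho$-closest point $\Rightarrow$ differentiable'' the paper invokes a general theorem on marginal functions (Theorem~3.4.4 of Cannarsa--Sinestrari), and for the converse it refers to Lemma~\ref{lem: rho not C1}. Your sandwiching argument for the first implication is essentially an in-line proof of the relevant special case of that marginal-function theorem, and your argument for the converse is a streamlined variant of the proof of Lemma~\ref{lem: rho not C1} (you reach ``$x-y_1$ and $x-y_2$ are parallel'' via $D\gamma(x-y_1)=D\gamma(x-y_2)$ and the identity $D\gamma^\circ\!\circ D\gamma=\mathrm{id}/\gamma$, while Lemma~\ref{lem: rho not C1} reaches the same conclusion via strict convexity of $\gamma$ and the directional-derivative computation along the two segments). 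The payoff of your route is that it avoids the external reference and keeps the argument elementary; the paper's route is shorter to state and highlights that this is a standard fact about infima of smooth families.
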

\begin{rem*}
This theorem is actually true if we merely assume that $\varphi$
and $\partial K$ are $C^{1}$. In addition, we only need $\partial U$
to be compact, and the boundedness of $U$ is not actually needed
in the following proof. So for example, the same result holds on the
domain $\R^{n}-\overline{U}$, with its corresponding $\rho,\mu$.
\end{rem*}
\begin{proof}
This is a trivial consequence of Theorem 3.4.4 of \citep{cannarsa2004semiconcave},
since $\rho$ is the minimum of a family of smooth functions. (Such
functions are called marginal functions.) Just note that we have to
restrict ourselves to a neighborhood of $x$, so that $D\gamma(\cdot-\tilde{y})$
exists and is continuous for every $\tilde{y}\in\partial U$. Also
note that if $x$ has more than one $\rho$-closest point on $\partial U$,
then $\rho$ is not differentiable at $x$ by Lemma \ref{lem: rho not C1}.
Finally note that $D\gamma(x-y)=\mu(y)$, as we have shown in the
proof of the previous lemma. 
\end{proof}
\begin{proof}[\textbf{Proof of Theorem \ref{thm: rho is C2 at y}}]
 We will show that $\rho$ has a $C^{k,\alpha}$ extension to an
open neighborhood of $y$. Note that if we consider $\rho$ as a function
on all of $\R^{n}$, then it is not differentiable on $\partial U$.
However, we will show that the following extension of $\rho$, which
can be considered a signed version of $\rho$ on $\R^{n}$, is $C^{k,\alpha}$
on a neighborhood of $\partial U$: 
\[
\rho_{s}(x):=\begin{cases}
\rho(x) & \textrm{if }x\in\overline{U},\\
-\bar{\rho}(x) & \textrm{if }x\in\R^{n}-\overline{U}.
\end{cases}
\]
Note that for $x\in\partial U$ we have $-\bar{\rho}(x)=-(-\varphi(x))=\varphi(x)=\rho(x)$.
So in particular, $\rho_{s}$ is a continuous function. In addition,
note that $\partial(\R^{n}-\overline{U})=\partial U$, but the inward
unit normal to $\partial(\R^{n}-\overline{U})$ is $-\nu$. Let $\bar{\mu}$
be the function constructed from $-\varphi,-\nu$, as $\mu$ is constructed
from $\varphi,\nu$. Then we have $\bar{\mu}=-D\varphi+\bar{\lambda}(-\nu)=-(D\varphi+\bar{\lambda}\nu)$,
for some uniquely determined $\bar{\lambda}>0$. But the gauge function
of $(-K)^{\circ}=-K^{\circ}$ is $\gamma^{\circ}(-\,\cdot)$, so we
must have $1=\gamma^{\circ}(-\bar{\mu})=\gamma^{\circ}(D\varphi+\bar{\lambda}\nu)$.
Hence $\bar{\lambda}=\lambda$, and therefore $\bar{\mu}=-\mu$. Thus
if we incorporate this in (\ref{eq: parametrize by rho}), we obtain
\begin{equation}
x=y+\big(\bar{\rho}(x)-(-\varphi(y))\big)\,\big(-D\gamma^{\circ}(-\bar{\mu}(y))\big)=y+\big(-\bar{\rho}(x)-\varphi(y)\big)\,D\gamma^{\circ}(\mu(y)),\label{eq: parametrize by -rho}
\end{equation}
where $x\in\R^{n}-\overline{U}$ has $y$ as its $\bar{\rho}$-closest
point on $\partial U$. Note that the derivative of the gauge function
of $(-K)^{\circ}$ is $-D\gamma^{\circ}(-\,\cdot)$.

Let $\mathrm{z}\mapsto Y(\mathrm{z})$ be a $C^{k,\alpha}$ parametrization
of $\partial U$ around $Y(0)=y$, where $\mathrm{z}$ varies in an
open set $V\subset\R^{n-1}$. Consider the map $G:V\times\R\to\R^{n}$
defined by 
\[
G(\mathrm{z},t):=Y(\mathrm{z})+\big(t-\varphi(Y(\mathrm{z}))\big)\,D\gamma^{\circ}\big(\mu(Y(\mathrm{z}))\big).
\]
Note that $G$ is a $C^{k-1,\alpha}$ function. Since $\mu$ is a
$C^{k-1,\alpha}$ function on the $C^{k,\alpha}$ manifold $\partial U$,
we can extend it to a $C^{k-1,\alpha}$ function on a neighborhood
of $\partial U$, by Lemma 6.38 of \citep{MR1814364}. Also note that
we have $G(0,\varphi(y))=y$. Now we have 
\[
\begin{cases}
D_{\mathrm{z}_{j}}G=D_{\mathrm{z}_{j}}Y-D_{\mathrm{z}_{j}}Y^{i}D_{i}\varphi D\gamma^{\circ}(\mu)+(t-\varphi)D_{\mathrm{z}_{j}}Y^{i}D^{2}\gamma^{\circ}(\mu)D_{i}\mu,\\
D_{t}G=D\gamma^{\circ}(\mu).
\end{cases}
\]
Note that $DY$ is evaluated at $\mathrm{z}$, and $\mu,\varphi,D\mu,D\varphi$
are evaluated at $Y(\mathrm{z})$. Also note that in the last term
of $D_{\mathrm{z}_{j}}G$, we evaluate the action of the matrix $D^{2}\gamma^{\circ}(\mu)$
on the vector $D_{i}\mu$. In addition, remember that we are using
the convention of summing over repeated indices. Hence we have 
\[
\begin{cases}
D_{\mathrm{z}_{j}}G(0,\varphi(y))=D_{\mathrm{z}_{j}}Y-D_{\mathrm{z}_{j}}Y^{i}D_{i}\varphi D\gamma^{\circ}(\mu),\\
D_{t}G(0,\varphi(y))=D\gamma^{\circ}(\mu),
\end{cases}
\]
where $\mu,D\varphi$ are evaluated at $y$. Let $w_{j}:=D_{\mathrm{z}_{j}}Y$.
Note that $w_{1},\dots,w_{n-1}$ is a basis for the tangent space
to $\partial U$ at $y$. Let $w$ be the orthogonal projection of
$D\gamma^{\circ}(\mu)$ on this tangent space. Then we have (we represent
a matrix by its columns) 
\begin{align*}
 & \hspace{-1cm}\det DG(0,\varphi(y))\\
 & =\det\begin{bmatrix}w_{1}-\langle w_{1},D\varphi\rangle D\gamma^{\circ}(\mu) & \cdots & w_{n-1}-\langle w_{n-1},D\varphi\rangle D\gamma^{\circ}(\mu) & D\gamma^{\circ}(\mu)\end{bmatrix}\\
 & =\det\begin{bmatrix}w_{1} & \cdots & w_{n-1} & D\gamma^{\circ}(\mu)\end{bmatrix}=\det\begin{bmatrix}w_{1} & \cdots & w_{n-1} & w+\langle D\gamma^{\circ}(\mu),\nu\rangle\nu\end{bmatrix}\\
 & =\langle D\gamma^{\circ}(\mu),\nu\rangle\det\begin{bmatrix}w_{1} & \cdots & w_{n-1} & \nu\end{bmatrix}\ne0.
\end{align*}
Note that in the last line we have used (\ref{eq: Dg0 . n > 0}),
and the fact that $w_{1},\dots,w_{n-1},\nu$ are linearly independent.

Therefore by the inverse function theorem, $G$ is invertible on an
open set of the form $W\times(\varphi(y)-h,\varphi(y)+h)$, and it
has a $C^{k-1,\alpha}$ inverse on a neighborhood of $y$. Let $B_{r}(y)$
be contained in that neighborhood, and suppose $r$ is small enough
so that for every $x\in\overline{U}\cap B_{r}(y)$, the $\rho$-closest
points on $\partial U$ to $x$ belong to $\partial U\cap Y(W)$.
This is possible due to Lemma \ref{lem: cont of y}, and the fact
that $y$ is the unique $\rho$-closest point to $y$ because of (\ref{eq: phi strct Lip}).
Similarly, suppose that $r$ is small enough so that for every $x\in(\R^{n}-\overline{U})\cap B_{r}(y)$,
the $\bar{\rho}$-closest points on $\partial U$ to $x$ belong to
$\partial U\cap Y(W)$. Also suppose that $r$ is small enough so
that for every $x\in B_{r}(y)$ we have $\rho_{s}(x)\in(\varphi(y)-h,\varphi(y)+h)$,
which is possible due to the continuity if $\rho_{s}$. Now we know
that $G:(\mathrm{z},t)\mapsto x$ has an inverse, denoted by $\mathrm{z}(x),t(x)$,
where $\mathrm{z}(\cdot),t(\cdot)$ are $C^{k-1,\alpha}$ functions
of $x$. Let $\tilde{y}:=Y(\mathrm{z}(x))$. Then we have 
\[
x=G(\mathrm{z}(x),t(x)):=\tilde{y}+\big(t(x)-\varphi(\tilde{y})\big)\,D\gamma^{\circ}\big(\mu(\tilde{y})\big).
\]
On the other hand, (\ref{eq: parametrize by rho}) and (\ref{eq: parametrize by -rho})
imply that 
\[
x=\hat{y}+\big(\rho_{s}(x)-\varphi(\hat{y})\big)\,D\gamma^{\circ}(\mu(\hat{y})),
\]
where $\hat{y}$ is one of the $\rho$-closest or $\bar{\rho}$-closest
points on $\partial U$ to $x$, depending on whether $x\in\overline{U}$
or $x\in\R^{n}-\overline{U}$. (Note that when $x=\hat{y}\in\partial U$,
the equation holds trivially.) But by our assumption about $B_{r}(y)$,
there is $\hat{\mathrm{z}}\in W$ such that $\hat{y}=Y(\hat{\mathrm{z}})$.
Hence $(\hat{\mathrm{z}},\rho_{s}(x))\in W\times(\varphi(y)-h,\varphi(y)+h)$,
and we have $G(\hat{\mathrm{z}},\rho_{s}(x))=x$. Therefore due to
the invertibility of $G$ we must have 
\[
\hat{y}=Y(\hat{\mathrm{z}})=Y(\mathrm{z}(x)),\qquad\rho_{s}(x)=t(x).
\]
Thus in particular, $\rho_{s}$ is a $C^{k-1,\alpha}$ function of
$x$. Hence $\rho$ is a $C^{k-1,\alpha}$ function on $\overline{U}\cap B_{r}(y)$.

Next, consider the line segment $t\mapsto y+(t-\varphi(y))D\gamma^{\circ}(\mu(y))$,
where $t\in(\varphi(y),\varphi(y)+\tilde{h})$. If $\tilde{h}>0$
is small enough, then this segment lies inside $U\cap B_{r}(y)$,
since we know that $\big\langle D\gamma^{\circ}(\mu(y)),\nu(y)\big\rangle>0$.
Now similarly to the last paragraph, we can show that if $x$ belongs
to this segment, then $y$ is the $\rho$-closest point on $\partial U$
to $x$. Note that $x\notin R_{\rho,0}$, since $\rho$ is differentiable
at $x$. Thus we have $D\rho(x)=\mu(y)$. Hence if we let $x$ approaches
$y$ along this segment, we get 
\[
D\rho(y)=\lim_{x\to y}D\rho(x)=\lim_{x\to y}\mu(y)=\mu(y),
\]
because $D\rho$ is continuous. 

Finally note that for every $x\in\overline{U}\cap B_{r}(y)$ we have
\[
D\rho_{s}(x)=D\rho(x)=\mu\big(Y(\mathrm{z}(x))\big).
\]
Similarly we have $D\rho_{s}(x)=-D\bar{\rho}(x)=-\bar{\mu}\big(Y(\mathrm{z}(x))\big)=\mu\big(Y(\mathrm{z}(x))\big)$,
when $x\in(\R^{n}-\overline{U})\cap B_{r}(y)$. Furthermore we know
that $\mathrm{z}(\cdot),Y,\mu$ are $C^{k-1,\alpha}$ functions. Therefore
$\rho_{s}$ is a $C^{k,\alpha}$ function. Consequently, $\rho$ is
a $C^{k,\alpha}$ function on $\overline{U}\cap B_{r}(y)$, as desired.

Now let us compute $D^{2}\rho(y)$. We know that $D\rho=\mu$ on $\partial U$.
So for every vector $w$ which is tangent to $\partial U$ we have
$D_{w}D\rho=D_{w}\mu$. Remember that $I-X$ is the projection on
the tangent space to $\partial U$ parallel to $D\gamma^{\circ}(\mu)$.
Hence by (\ref{eq: D mu}), for every vector $\tilde{w}$ we have
\begin{align*}
\tilde{w}(D^{2}\rho)w=\tilde{w}(D\mu)w & =\tilde{w}(I-X^{T})(D^{2}\varphi+\lambda D^{2}d)w\\
 & =\tilde{w}(I-X^{T})(D^{2}\varphi+\lambda D^{2}d)(I-X)w.
\end{align*}
Next let us show that $D^{2}\rho(y)D\gamma^{\circ}(\mu)=0$. The reason
is that as we have seen before, when $s\ge0$ is small, the point
$y+sD\gamma^{\circ}(\mu)\in U$ has $y$ as its unique $\rho$-closest
point on $\partial U$. Thus by (\ref{eq: D rho (x)}) we have $D\rho(y+sD\gamma^{\circ}(\mu))=\mu$.
Hence if we differentiate with respect to $s$ we get 
\[
D^{2}\rho\big(y+sD\gamma^{\circ}(\mu)\big)D\gamma^{\circ}(\mu)=0.
\]
So if we let $s\to0$ we get the desired. Therefore we have 
\begin{align*}
\tilde{w}(D^{2}\rho)D\gamma^{\circ}(\mu)=0 & =\tilde{w}(I-X^{T})(D^{2}\varphi+\lambda D^{2}d)0\\
 & =\tilde{w}(I-X^{T})(D^{2}\varphi+\lambda D^{2}d)(I-X)D\gamma^{\circ}(\mu).
\end{align*}
Thus we get the desired formula for $D^{2}\rho(y)$.
\end{proof}
\begin{proof}[\textbf{Proof of Theorem \ref{thm: rho is C^2}}]
 Let $\mathrm{z}\mapsto Y(\mathrm{z})$ be a $C^{k,\alpha}$ parametrization
of $\partial U$ around $Y(0)=y$, where $\mathrm{z}$ varies in an
open set $V\subset\R^{n-1}$. Consider the map $G:V\times\R\to\R^{n}$
defined by 
\[
G(\mathrm{z},t):=Y(\mathrm{z})+\big(t-\varphi(Y(\mathrm{z}))\big)\,D\gamma^{\circ}\big(D\rho(Y(\mathrm{z}))\big).
\]
Note that $G$ is a $C^{k-1,\alpha}$ function. Also note that by
(\ref{eq: parametrize by rho}) and (\ref{eq: D rho (y)}) we have
\[
G(0,\rho(x))=y+\big(\rho(x)-\varphi(y)\big)\,D\gamma^{\circ}(D\rho(y))=y+\big(\rho(x)-\varphi(y)\big)\,D\gamma^{\circ}(\mu(y))=x.
\]
We wish to compute $DG$ around the point $(0,\rho(x))$. We have
\[
\begin{cases}
D_{\mathrm{z}_{j}}G=D_{\mathrm{z}_{j}}Y-\langle D\varphi,D_{\mathrm{z}_{j}}Y\rangle D\gamma^{\circ}(\mu)+(\rho(x)-\varphi)D^{2}\gamma^{\circ}(\mu)D^{2}\rho(y)D_{\mathrm{z}_{j}}Y,\\
D_{t}G=D\gamma^{\circ}(\mu).
\end{cases}
\]
Note that $DY$ is evaluated at $\mathrm{z}=0$; $\mu,\varphi,D\varphi$
are evaluated at $y=Y(0)$; and we used the fact that $D\rho(y)=\mu(y)$.

Next note that we have $QD\gamma^{\circ}(\mu)=D\gamma^{\circ}(\mu)$,
since by (\ref{eq: D2 rho Dg0 =00003D 0}) we have $D^{2}\rho(y)D\gamma^{\circ}(\mu)=0$.
Now let $w_{j}:=D_{\mathrm{z}_{j}}Y(0)$. Note that $w_{1},\dots,w_{n-1}$
is a basis for the tangent space to $\partial U$ at $y$. Let $w$
be the orthogonal projection of $D\gamma^{\circ}(\mu)$ on this tangent
space. Then we have (we represent a matrix by its columns) 
\begin{align*}
 & \hspace{-1cm}\det DG(0,\rho(x))\\
 & =\det\begin{bmatrix}Qw_{1}-\langle D\varphi,w_{1}\rangle D\gamma^{\circ}(\mu) & \cdots & Qw_{n-1}-\langle D\varphi,w_{n-1}\rangle D\gamma^{\circ}(\mu) & D\gamma^{\circ}(\mu)\end{bmatrix}\\
 & =\det\begin{bmatrix}Qw_{1} & \cdots & Qw_{n-1} & D\gamma^{\circ}(\mu)\end{bmatrix}=\det\begin{bmatrix}Qw_{1} & \cdots & Qw_{n-1} & QD\gamma^{\circ}(\mu)\end{bmatrix}\\
 & =\det Q\,\det\begin{bmatrix}w_{1} & \cdots & w_{n-1} & D\gamma^{\circ}(\mu)\end{bmatrix}\\
 & =\det Q\,\det\begin{bmatrix}w_{1} & \cdots & w_{n-1} & w+\langle D\gamma^{\circ}(\mu),\nu\rangle\nu\end{bmatrix}\\
 & =\langle D\gamma^{\circ}(\mu),\nu\rangle\det Q\,\det\begin{bmatrix}w_{1} & \cdots & w_{n-1} & \nu\end{bmatrix}\ne0.
\end{align*}
Note that in the last line we have used (\ref{eq: Dg0 . n > 0}),
and the fact that $w_{1},\dots,w_{n-1},\nu$ are linearly independent.
Therefore by the inverse function theorem, $G$ is invertible on a
neighborhood of $(0,\rho(x))$, and it has a $C^{k-1,\alpha}$ inverse
on a neighborhood of $x$. Then as in the proof of the previous theorem,
we can show that the inverse of $G$ is of the form $G^{-1}(\cdot)=(\mathrm{z}(\cdot),\rho(\cdot))$,
where $\mathrm{z}(\cdot)$ is a $C^{k-1,\alpha}$ function of $x$.
Note that here we need the fact that $x\notin R_{\rho,0}$. In addition,
we can similarly conclude that $\rho$ is $C^{k,\alpha}$ on a neighborhood
of $x$. 

Now let us compute $DG^{-1}$ at $x$. To simplify the notation set
$s_{j}:=\langle D\varphi,w_{j}\rangle$. Then we have 
\begin{align*}
DG(0,\rho(x)) & =\begin{bmatrix}Qw_{1}-s_{1}D\gamma^{\circ}(\mu) & \cdots & Qw_{n-1}-s_{n-1}D\gamma^{\circ}(\mu) & D\gamma^{\circ}(\mu)\end{bmatrix}\\
 & =\begin{bmatrix}Qw_{1}-s_{1}QD\gamma^{\circ}(\mu) & \cdots & Qw_{n-1}-s_{n-1}QD\gamma^{\circ}(\mu) & QD\gamma^{\circ}(\mu)\end{bmatrix}\\
 & =Q\begin{bmatrix}w_{1}-s_{1}D\gamma^{\circ}(\mu) & \cdots & w_{n-1}-s_{n-1}D\gamma^{\circ}(\mu) & D\gamma^{\circ}(\mu)\end{bmatrix}\\
 & =QA\begin{bmatrix}e_{1}-s_{1}e_{n} & \cdots & e_{n-1}-s_{n-1}e_{n} & e_{n}\end{bmatrix}\negthickspace,
\end{align*}
where $e_{j}$ is the $j$-th column vector in the standard basis
of $\R^{n}$, and 
\[
A:=\begin{bmatrix}w_{1} & \cdots & w_{n-1} & D\gamma^{\circ}(\mu)\end{bmatrix}\negthickspace.
\]
Note that the $j$-th column of every matrix is equal to the action
of the matrix on $e_{j}$. Also note that $w_{1},\dots,w_{n-1},D\gamma^{\circ}(\mu)$
are linearly independent due to (\ref{eq: Dg0 . n > 0}). Thus $A$
is invertible. Therefore we have 
\begin{align*}
DG^{-1}(x) & =\big(DG(0,\rho(x))\big)^{-1}\\
 & =\big(QA\begin{bmatrix}e_{1}-s_{1}e_{n} & \cdots & e_{n-1}-s_{n-1}e_{n} & e_{n}\end{bmatrix}\big)^{-1}\\
 & =\begin{bmatrix}e_{1}-s_{1}e_{n} & \cdots & e_{n-1}-s_{n-1}e_{n} & e_{n}\end{bmatrix}^{-1}A^{-1}Q^{-1}\\
 & =\begin{bmatrix}e_{1}+s_{1}e_{n} & \cdots & e_{n-1}+s_{n-1}e_{n} & e_{n}\end{bmatrix}A^{-1}Q^{-1}.
\end{align*}
Note that when $i<n-1$, the $i$-th component of $G^{-1}$ is $\mathrm{z}_{i}$.
Hence the $i$-th row of $DG^{-1}$ is $D\mathrm{z}_{i}$. On the
other hand, the $i$-th row of $DG^{-1}$ is equal to the $i$-th
row of 
\[
\begin{bmatrix}e_{1}+s_{1}e_{n} & \cdots & e_{n-1}+s_{n-1}e_{n} & e_{n}\end{bmatrix}
\]
times $A^{-1}Q^{-1}$; which equals to $e_{i}^{T}A^{-1}Q^{-1}$, where
$e_{i}^{T}$ is the transpose of $e_{i}$. So we have $D\mathrm{z}=\tilde{I}A^{-1}Q^{-1}$,
where $\tilde{I}$ is the $(n-1)\times n$ matrix whose $i$-th row
is $e_{i}^{T}$. 

Now note that $D\rho(x)=\mu(y)=D\rho(y)=D\rho(Y(0))=D\rho(Y(\mathrm{z}(x)))$.
Thus we have 
\[
D^{2}\rho(x)=D^{2}\rho(y)DY(0)D\mathrm{z}(x)=D^{2}\rho(y)DY(0)\tilde{I}A^{-1}Q^{-1}.
\]
On the other hand we know that $DY(0)=\begin{bmatrix}w_{1} & \cdots & w_{n-1}\end{bmatrix}$,
i.e. the $j$-th column of $DY(0)$ is the $j$-th column of $A$,
for $j<n$. Then it is easy to check that $DY(0)\tilde{I}=A\hat{I}$,
where $\hat{I}$ is the $n\times n$ matrix whose first $n-1$ columns
are the same as $I$, and its $n$-th column is $0$. Next note that
the $n$-th row of $A^{-1}$ is $\frac{1}{\langle D\gamma^{\circ}(\mu),\nu\rangle}\nu$.
Because it must be orthogonal to the first $n-1$ columns of $A$,
i.e. $w_{j}$'s; so it is a multiple of $\nu$. In addition, its inner
product with the $n$-th column of $A$, i.e. $D\gamma^{\circ}(\mu)$,
must be $1$; thus we get the desired. Hence we have 
\begin{align*}
A\hat{I}A^{-1} & =A\big(I-\begin{bmatrix}0 & \cdots & 0 & e_{n}\end{bmatrix}\big)A^{-1}\\
 & =I-A\begin{bmatrix}0 & \cdots & 0 & e_{n}\end{bmatrix}A^{-1}=I-\begin{bmatrix}0 & \cdots & 0 & Ae_{n}\end{bmatrix}A^{-1}\\
 & =I-\begin{bmatrix}0 & \cdots & 0 & D\gamma^{\circ}(\mu)\end{bmatrix}A^{-1}=I-\frac{1}{\langle D\gamma^{\circ}(\mu),\nu\rangle}D\gamma^{\circ}(\mu)\otimes\nu=I-X.
\end{align*}
Therefore we get 
\[
D^{2}\rho(x)=D^{2}\rho(y)A\hat{I}A^{-1}Q^{-1}=D^{2}\rho(y)\big(I-X\big)Q^{-1}.
\]
However, by (\ref{eq: D2 rho (y)}) we know that $D^{2}\rho(y)$ has
a factor of $I-X$. Also, as explained before Lemma \ref{lem: K-normal},
we know that $I-X$ is a projection. Thus $(I-X)^{2}=I-X$, and we
get the desired formula for $D^{2}\rho(x)$.
\end{proof}

At this point we have the tools to completely characterize the $\rho$-ridge
$R_{\rho}$. Note that under the Assumption \ref{assu: 2} we have
$R_{\rho,0}\subset R_{\rho}$, due to Lemma \ref{lem: rho not C1}.
Theorem \ref{thm: ridge} specifies those points which are in $R_{\rho}-R_{\rho,0}$.
\begin{proof}[\textbf{Proof of Theorem \ref{thm: ridge}}]
 In Theorem \ref{thm: rho is C^2} we have shown that if $\det Q\ne0$
then $x\notin R_{\rho}$. So we only need to show that if $\det Q=0$
then $x\in R_{\rho}$. Suppose $\det Q(x)=0$. Then the definition
of $Q$ implies that $\tilde{\kappa}:=\frac{1}{\rho(x)-\varphi(y)}=\frac{1}{\gamma(x-y)}>0$
is an eigenvalue of $W(y)$. Suppose $\zeta$ is the corresponding
eigenvector of $W$. Let $z\in]x,y[$. Then by Lemma \ref{lem: segment to the closest pt}
we have $z\in U$, and $y$ is the unique $\rho$-closest point on
$\partial U$ to $z$. In addition we have $z-y=t(x-y)$ for some
$t\in(0,1)$. Thus $\gamma(z-y)=t\gamma(x-y)$. Therefore if $z$
is close enough to $x$, then we must have $\det Q(z)\ne0$. Because
otherwise $\frac{1}{\gamma(z-y)}=\frac{\tilde{\kappa}}{t}$ must be
an eigenvalue of $W$ for infinitely many $t$'s, which is a contradiction.
Hence by Theorem \ref{thm: rho is C^2}, $\rho$ is $C^{k,\alpha}$
on a neighborhood of $z$. 

Now suppose to the contrary that $x\notin R_{\rho}$. Then $\rho$
is $C^{1,1}$ on a neighborhood of $x$. Thus $\rho$ belongs to $W^{2,\infty}$
on a neighborhood of $x$. Consequently, $D^{2}\rho$ belongs to $L^{\infty}$
on a neighborhood of $x$. Therefore $D^{2}\rho$ is bounded on a
neighborhood of an open line segment $]x,z_{0}[$ for some $z_{0}\in]x,y[$,
because $\rho$ is $C^{k,\alpha}$ there. However, for $z\in]x,z_{0}[$
we have 
\[
Q(z)=I-\gamma(z-y)W=\frac{1}{s\tilde{\kappa}}\big(s\tilde{\kappa}I-W\big),
\]
where $s>1$ is such that $s(z-y)=x-y$. Hence we have $Q(z)\zeta=\frac{1}{s\tilde{\kappa}}(s-1)\tilde{\kappa}\zeta=\frac{s-1}{s}\zeta$.
Let $\xi:=\zeta D^{2}\gamma^{\circ}(\mu)$. Then by (\ref{eq: D2 rho (x)})
we have 
\begin{align*}
D_{\xi\zeta}^{2}\rho(z) & =\xi D^{2}\rho(z)\zeta=\xi D^{2}\rho(y)Q(z)^{-1}\zeta\\
 & =\xi D^{2}\rho(y)\frac{s}{s-1}\zeta=\frac{s}{s-1}\zeta D^{2}\gamma^{\circ}(\mu)D^{2}\rho(y)\zeta=\frac{-s\tilde{\kappa}}{s-1}|\zeta|^{2}.
\end{align*}
Therefore as $z\to x$ we have $D_{\xi\zeta}^{2}\rho(z)\to-\infty$,
since $s\to1^{+}$. This contradiction shows that $\rho$ cannot be
$C^{1,1}$ on a neighborhood of $x$, and therefore $x\in R_{\rho}$.
\end{proof}
The proofs of the following theorem and the proposition after it are
variants of the proofs of similar results in \citep{MR2336304}.
\begin{thm}
\label{thm: det Q > 0}Suppose the Assumption \ref{assu: 2} holds.
Suppose $x\in U$, and $y$ is one of the $\rho$-closest points to
$x$ on $\partial U$. Suppose $\tilde{\kappa}$ is an eigenvalue
of $W(y)$, where $W$ is defined by (\ref{eq: W,Q}). Then we have
\begin{equation}
\tilde{\kappa}\big(\rho(z)-\varphi(y)\big)=\tilde{\kappa}\gamma(z-y)<1,\label{eq: kd < 1}
\end{equation}
for every $z\in]x,y[$. As a consequence we have 
\[
\det Q(z)=\det\big(I-\gamma(z-y)W(y)\big)>0.
\]
\end{thm}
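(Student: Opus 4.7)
The plan is to combine the explicit formula $D^{2}\rho(z)=D^{2}\rho(y)Q(z)^{-1}$ from Theorem \ref{thm: rho is C^2} with the monotonicity of $D^{2}\rho$ along characteristics (Lemma \ref{lem: D2 rho decreas}) to forbid $\det Q$ from vanishing on the open segment $]x,y[$. The case $\tilde{\kappa}\le 0$ of (\ref{eq: kd < 1}) is trivial, so I assume $\tilde{\kappa}>0$. Parametrize the segment by $z_{s}:=y+s(x-y)$, $s\in[0,1]$, so that $\gamma(z_{s}-y)=s\gamma(x-y)$; then (\ref{eq: kd < 1}) reduces to $\tilde{\kappa}\gamma(x-y)\le 1$. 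Suppose, for contradiction, that $\tilde{\kappa}\gamma(x-y)>1$. Since $\det Q(z_{0})=1>0$ and $\det Q(z_{s})$ is continuous in $s$, and since the factor $1-s\gamma(x-y)\tilde{\kappa}$ vanishes inside $(0,1)$, I may select the smallest $s^{*}\in(0,1)$ with $\det Q(z_{s^{*}})=0$.

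On $(0,s^{*})$, Lemma \ref{lem: segment to the closest pt} gives that $y$ is the unique $\rho$-closest point to $z_{s}$, so $z_{s}\notin R_{\rho,0}$; combined with $\det Q(z_{s})>0$, Theorem \ref{thm: rho is C^2} supplies the smoothness of $\rho$ at $z_{s}$ together with the formula $D^{2}\rho(z_{s})=D^{2}\rho(y)Q(z_{s})^{-1}$. Taking an eigenvector $\zeta$ of $W(y)$ with eigenvalue $\tilde{\kappa}$ gives $Q(z_{s})\zeta=(1-s\gamma(x-y)\tilde{\kappa})\zeta$, and setting $\xi:=D^{2}\gamma^{\circ}(\mu(y))\zeta$ I mimic the computation at the end of the proof of Theorem \ref{thm: ridge} to obtain
\[
D_{\xi\zeta}^{2}\rho(z_{s})=\frac{\xi^{T}D^{2}\rho(y)\zeta}{1-s\gamma(x-y)\tilde{\kappa}}=\frac{\zeta^{T}D^{2}\gamma^{\circ}(\mu)D^{2}\rho(y)\zeta}{1-s\gamma(x-y)\tilde{\kappa}}=\frac{-\tilde{\kappa}\,|\zeta|^{2}}{1-s\gamma(x-y)\tilde{\kappa}}\longrightarrow-\infty
\]
as $s\to(s^{*})^{-}$, since the denominator decreases to $0^{+}$.

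To reach a contradiction I exploit the fact that the segment $[y,x]$ is a characteristic of the Hamilton--Jacobi equation $\gamma^{\circ}(Dv)=1$: indeed $D\rho\equiv\mu(y)$ on it by Proposition \ref{prop: D rho}, so the characteristic direction $D\gamma^{\circ}(D\rho)=D\gamma^{\circ}(\mu(y))$ is constant. Lemma \ref{lem: D2 rho decreas} therefore yields the upper bound $D_{\eta\eta}^{2}\rho(z_{s})\le D_{\eta\eta}^{2}\rho(y)$ for every vector $\eta$ and every $s\in(0,s^{*})$. Applying this with $\eta=\xi\pm\zeta$ and invoking the polarization identity
\[
4\,D_{\xi\zeta}^{2}\rho=D_{(\xi+\zeta)(\xi+\zeta)}^{2}\rho-D_{(\xi-\zeta)(\xi-\zeta)}^{2}\rho,
\]
the divergence $D_{\xi\zeta}^{2}\rho(z_{s})\to-\infty$ combined with the upper bound on $D_{(\xi+\zeta)(\xi+\zeta)}^{2}\rho(z_{s})$ forces $D_{(\xi-\zeta)(\xi-\zeta)}^{2}\rho(z_{s})\to+\infty$, contradicting the monotonicity bound applied to $\eta=\xi-\zeta$. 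Hence no such $s^{*}$ exists, and (\ref{eq: kd < 1}) holds. The determinantal consequence is then immediate: for $z\in]x,y[$, each real eigenvalue factor $1-\gamma(z-y)\tilde{\kappa}_{i}$ is positive by (\ref{eq: kd < 1}), while any complex eigenvalues contribute in conjugate pairs as positive factors $|1-\gamma(z-y)\tilde{\kappa}_{i}|^{2}$, so $\det Q(z)>0$.

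The main obstacle to anticipate is not the algebra, which is parallel to the argument in the proof of Theorem \ref{thm: ridge}, but rather ensuring that the monotonicity bound is legitimately available on $(0,s^{*})$ with only the $C^{k,\alpha}$ regularity of $\rho$ granted by Theorem \ref{thm: rho is C^2}; this is exactly what Lemma \ref{lem: D2 rho decreas} supplies, by the author's remark that the monotonicity of $D^{2}\rho$ does not need the $C^{3}$ regularity used in the general Hamilton--Jacobi derivation of Section \ref{sec: Reg Opstacles}'s introductory remarks.
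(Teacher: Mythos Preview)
Your argument has a genuine gap in the final polarization step, and a logical circularity in the appeal to Lemma~\ref{lem: D2 rho decreas}.

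\textbf{The polarization step fails.} From $4D_{\xi\zeta}^{2}\rho=D_{(\xi+\zeta)(\xi+\zeta)}^{2}\rho-D_{(\xi-\zeta)(\xi-\zeta)}^{2}\rho$ together with the upper bound $D_{(\xi+\zeta)(\xi+\zeta)}^{2}\rho(z_{s})\le D_{(\xi+\zeta)(\xi+\zeta)}^{2}\rho(y)$, you \emph{cannot} conclude that $D_{(\xi-\zeta)(\xi-\zeta)}^{2}\rho(z_{s})\to+\infty$. To force a lower bound on $D_{(\xi-\zeta)(\xi-\zeta)}^{2}\rho(z_{s})$ via that identity you would need a \emph{lower} bound on $D_{(\xi+\zeta)(\xi+\zeta)}^{2}\rho(z_{s})$, and Lemma~\ref{lem: D2 rho decreas} supplies only one-sided (upper) control of pure second directional derivatives. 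The matrix inequality $D^{2}\rho(z_{s})\le D^{2}\rho(y)$ is perfectly compatible with the off-diagonal blow-up $D_{\xi\zeta}^{2}\rho(z_{s})\to-\infty$: both $D_{(\xi+\zeta)(\xi+\zeta)}^{2}\rho(z_{s})$ and $D_{(\xi-\zeta)(\xi-\zeta)}^{2}\rho(z_{s})$ may tend to $-\infty$ while their difference diverges. No contradiction with monotonicity arises, and the argument collapses at this point. (A secondary slip: you should take $\tilde{\kappa}$ to be the \emph{largest} eigenvalue of $W(y)$, so that your $s^{*}$ is exactly the zero of $1-s\gamma(x-y)\tilde{\kappa}$; as written, $s^{*}$ could come from a larger eigenvalue and then your denominator would not vanish at $s^{*}$.)

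\textbf{Circularity.} In the paper, Lemma~\ref{lem: D2 rho decreas} is stated and proved \emph{after} Theorem~\ref{thm: det Q > 0} and explicitly invokes it to guarantee $C^{2}$ regularity of $\rho$ along the whole segment $[y,x]$. Citing it to prove Theorem~\ref{thm: det Q > 0} is therefore circular. You are right that on $(0,s^{*})$ the condition $\det Q\ne0$ holds by construction, so the \emph{computation} behind Lemma~\ref{lem: D2 rho decreas} could be redone there directly from Theorem~\ref{thm: rho is C^2}; but even granting that, the monotonicity inequality itself is insufficient for the reason just given.

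For comparison, the paper's proof does not use Theorem~\ref{thm: rho is C^2} or the monotonicity at all. It exploits the second-order necessary condition for the minimum of $\mathrm{z}\mapsto\varphi(Y(\mathrm{z}))+\gamma(x-Y(\mathrm{z}))$ at $\mathrm{z}=0$ to show that $D^{2}\rho(y)+\tfrac{1}{\gamma(x-y)}D^{2}\gamma(D\gamma^{\circ}(\mu))$ is positive semidefinite; it then shows that any eigenvector of $W(y)$ with nonzero eigenvalue lies in $\{\mu\}^{\perp}$, and contracts the positivity inequality against the pseudoinverse of $D^{2}\gamma^{\circ}(\mu)$ on $\{\mu\}^{\perp}$, using the identity $D^{2}\gamma^{\circ}(\mu)D^{2}\gamma(D\gamma^{\circ}(\mu))=I-D\gamma^{\circ}(\mu)\otimes\mu$, to extract the scalar bound $1-\tilde{\kappa}\gamma(z-y)>0$.
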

\begin{rem*}
As a result, due to the continuity we have 
\[
\tilde{\kappa}\big(\rho(x)-\varphi(y)\big)=\tilde{\kappa}\gamma(x-y)\le1,
\]
and $\det\big(I-\gamma(x-y)W(y)\big)\ge0$. Note that $x$ can have
$\rho$-closest points on $\partial U$ other than $y$. 
\end{rem*}
\begin{proof}
Note that by Lemma \ref{lem: segment to the closest pt}, $y$ is
the unique $\rho$-closest point to $z$ on $\partial U$. Also note
that by (\ref{eq: rho}) we have $\gamma(z-y)=\rho(z)-\varphi(y)$.
So $Q(z)=I-\gamma(z-y)W$. Now let $\mathrm{z}\mapsto Y(\mathrm{z})$
be a smooth parametrization of $\partial U$ around $Y(0)=y$, where
$\mathrm{z}$ varies in an open set $V\subset\R^{n-1}$. Then due
to $\rho$'s definition, the function $\varphi(Y(\mathrm{z}))+\gamma(x-Y(\mathrm{z}))$
has a minimum at $\mathrm{z}=0$. Hence its second derivative must
be positive semidefinite at $\mathrm{z}=0$. We have 
\[
D_{\mathrm{z}_{j}}\big[\varphi(Y(\mathrm{z}))+\gamma(x-Y(\mathrm{z}))\big]=\langle D_{\mathrm{z}_{j}}Y,D\varphi(Y(\mathrm{z}))-D\gamma(x-Y(\mathrm{z}))\rangle.
\]
Therefore at $\mathrm{z}=0$ we have 
\begin{align}
D_{\mathrm{z}_{j}\mathrm{z}_{j}}^{2}\big[\varphi(Y(\mathrm{z}))+\gamma(x-Y(\mathrm{z}))\big] & =\langle D_{\mathrm{z}_{j}\mathrm{z}_{j}}^{2}Y,D\varphi(y)-D\gamma(x-y)\rangle\nonumber \\
 & \qquad\qquad+D_{\mathrm{z}_{j}}Y\big(D^{2}\varphi(y)+D^{2}\gamma(x-y)\big)D_{\mathrm{z}_{j}}Y\ge0.\label{eq: 1 in det Q > 0}
\end{align}
However by (\ref{eq: K-normal}) we know that $x-y=\gamma(x-y)D\gamma^{\circ}(\mu(y))$.
Hence by (\ref{eq: homog}),(\ref{eq: Dg(Dg0)}) we have 
\[
D\gamma(x-y)=D\gamma(D\gamma^{\circ}(\mu))=\mu,
\]
since $\gamma^{\circ}(\mu)=1$. Thus by (\ref{eq: mu}) we have 
\[
D\varphi(y)-D\gamma(x-y)=D\varphi(y)-\mu(y)=-\lambda(y)\nu(y).
\]
On the other hand we have $\langle D_{\mathrm{z}_{j}}Y,\nu(Y(\mathrm{z}))\rangle=0$.
So if we differentiate this equality we get 
\[
\langle D_{\mathrm{z}_{j}\mathrm{z}_{j}}^{2}Y,\nu\rangle+D_{\mathrm{z}_{j}}Y(D\nu)D_{\mathrm{z}_{j}}Y=0.
\]
However we know that $D\nu=D^{2}d$, where $d$ is the Euclidean distance
to $\partial U$. Therefore we obtain 
\[
\langle D_{\mathrm{z}_{j}\mathrm{z}_{j}}^{2}Y,D\varphi(y)-D\gamma(x-y)\rangle=\langle D_{\mathrm{z}_{j}\mathrm{z}_{j}}^{2}Y,-\lambda\nu\rangle=D_{\mathrm{z}_{j}}Y(\lambda D^{2}d)D_{\mathrm{z}_{j}}Y.
\]
Thus by inserting this equality in (\ref{eq: 1 in det Q > 0}) we
get 
\[
D_{\mathrm{z}_{j}}Y\big(\lambda D^{2}d(y)+D^{2}\varphi(y)+D^{2}\gamma(x-y)\big)D_{\mathrm{z}_{j}}Y\ge0.
\]
To simplify the notation set $w_{j}:=D_{\mathrm{z}_{j}}Y$. Remember
that we have $(I-X)w_{j}=w_{j}$, since $I-X$ is the projection on
the tangent space to $\partial U$ parallel to $D\gamma^{\circ}(\mu)$,
as explained after equation (\ref{eq: X}). Hence we have 
\[
w_{j}(I-X^{T})\big(\lambda D^{2}d(y)+D^{2}\varphi(y)\big)(I-X)w_{j}+w_{j}D^{2}\gamma(x-y)w_{j}\ge0.
\]
Then by (\ref{eq: D2 rho (y)}) we get 
\[
w_{j}\big(D^{2}\rho(y)+D^{2}\gamma(x-y)\big)w_{j}\ge0.
\]
In addition, by (\ref{eq: homog}) we have 
\[
D^{2}\gamma(x-y)=D^{2}\gamma\big(\gamma(x-y)D\gamma^{\circ}(\mu)\big)=\tfrac{1}{\gamma(x-y)}D^{2}\gamma(D\gamma^{\circ}(\mu)).
\]
Thus the symmetric matrix $D^{2}\rho(y)+\frac{1}{\gamma(x-y)}D^{2}\gamma(D\gamma^{\circ}(\mu))$
is positive semidefinite, since its action on $D\gamma^{\circ}(\mu)$
is zero due to (\ref{eq: D2 rho Dg0 =00003D 0}),(\ref{eq: Euler formula});
and $w_{1},\dots,w_{n-1},D\gamma^{\circ}(\mu)$ form a basis for $\R^{n}$
due to (\ref{eq: Dg0 . n > 0}).

Now note that the eigenvalues of $D^{2}\gamma(D\gamma^{\circ}(\mu))$
are all positive except for one $0$ which corresponds to the eigenvector
$D\gamma^{\circ}(\mu)$, as explained in Subsection \ref{subsec: Reg gaug}.
We also know that $\langle D\gamma^{\circ}(\mu),\mu\rangle=\gamma^{\circ}(\mu)=1\ne0$.
Let $\{\mu\}^{\perp}$ be the subspace orthogonal to $\mu$. Then
for every nonzero $w\in\{\mu\}^{\perp}$ we must have $wD^{2}\gamma(D\gamma^{\circ}(\mu))w>0$.
Hence for every $t>\frac{1}{\gamma(x-y)}$ we have 
\begin{align}
w\big(D^{2}\rho(y)+tD^{2}\gamma(D\gamma^{\circ}(\mu))\big)w & =w\big(D^{2}\rho(y)+\tfrac{1}{\gamma(x-y)}D^{2}\gamma(D\gamma^{\circ}(\mu))\big)w\nonumber \\
 & \qquad\qquad+\big(t-\tfrac{1}{\gamma(x-y)}\big)wD^{2}\gamma(D\gamma^{\circ}(\mu))w>0.\label{eq: 2 in det Q > 0}
\end{align}
On the other hand, if we differentiate (\ref{eq: Dg(Dg0)}) we get
\[
D^{2}\gamma(D\gamma^{\circ}(\cdot))D^{2}\gamma^{\circ}(\cdot)=\frac{1}{\gamma^{\circ}(\cdot)}I-\frac{1}{\gamma^{\circ}(\cdot)^{2}}(\cdot)\otimes D\gamma^{\circ}(\cdot).
\]
Hence we have $D^{2}\gamma(D\gamma^{\circ}(\mu))D^{2}\gamma^{\circ}(\mu)=I-\mu\otimes D\gamma^{\circ}(\mu)$.
By taking the transpose of this equation we get 
\begin{equation}
D^{2}\gamma^{\circ}(\mu)D^{2}\gamma(D\gamma^{\circ}(\mu))=I-D\gamma^{\circ}(\mu)\otimes\mu.\label{eq: D2g0 . D2g(Dg0)}
\end{equation}
Next suppose that $\tilde{\kappa}$ is an eigenvalue of $W$ corresponding
to the eigenvector $w$. If $\tilde{\kappa}=0$ then (\ref{eq: kd < 1})
holds trivially. So suppose $\tilde{\kappa}\ne0$. Then we have 
\begin{align*}
\langle w,\mu\rangle & =\frac{1}{\tilde{\kappa}}\langle Ww,\mu\rangle=-\frac{1}{\tilde{\kappa}}\langle D^{2}\gamma^{\circ}(\mu)D^{2}\rho(y)w,\mu\rangle\\
 & =-\frac{1}{\tilde{\kappa}}\langle D^{2}\rho(y)w,D^{2}\gamma^{\circ}(\mu)\mu\rangle=-\frac{1}{\tilde{\kappa}}\langle D^{2}\rho(y)w,0\rangle=0.
\end{align*}
Hence $w\in\{\mu\}^{\perp}$. On the other hand, note that the eigenvalues
of $D^{2}\gamma^{\circ}(\mu)$ are all positive except for one $0$
which corresponds to the eigenvector $\mu$. In addition we know that
the other eigenvectors of $D^{2}\gamma^{\circ}(\mu)$ are orthogonal
to $\mu$, since it is a symmetric matrix. Thus the image of $D^{2}\gamma^{\circ}(\mu)$
is $\{\mu\}^{\perp}$, and its restriction to $\{\mu\}^{\perp}$ is
positive definite and invertible. Let $A$ be the symmetric matrix
whose action on $\{\mu\}^{\perp}$ is the inverse of the action of
$D^{2}\gamma^{\circ}(\mu)$ restricted to $\{\mu\}^{\perp}$; and
$A\mu=0$. Then the restriction of $A$ to $\{\mu\}^{\perp}$ is also
positive definite. It is easy to check that $AD^{2}\gamma^{\circ}(\mu)=I-\frac{1}{|\mu|^{2}}\mu\otimes\mu$.

Now for $z\in]x,y[$ let $t:=\frac{1}{\gamma(z-y)}$. Then $t>\frac{1}{\gamma(x-y)}$.
Therefore by (\ref{eq: 2 in det Q > 0}) and (\ref{eq: D2g0 . D2g(Dg0)})
we have 
\begin{align*}
0 & <w\big(D^{2}\rho(y)+tD^{2}\gamma(D\gamma^{\circ}(\mu))\big)w=w(I-\tfrac{1}{|\mu|^{2}}\mu\otimes\mu)\big(D^{2}\rho(y)+tD^{2}\gamma(D\gamma^{\circ}(\mu))\big)w\\
 & =wAD^{2}\gamma^{\circ}(\mu)\big(D^{2}\rho(y)+tD^{2}\gamma(D\gamma^{\circ}(\mu))\big)w=wA\big(-W+t(I-D\gamma^{\circ}(\mu)\otimes\mu)\big)w\\
 & =wA(-\tilde{\kappa}+t)w=\tfrac{1}{\gamma(z-y)}(-\tilde{\kappa}\gamma(z-y)+1)wAw.
\end{align*}
Note that since $w$ is orthogonal to $\mu$, we have $w(\mu\otimes\mu)=(D\gamma^{\circ}(\mu)\otimes\mu)w=0$.
Hence we get the desired relation (\ref{eq: kd < 1}), because $wAw>0$
due to the positive definiteness of $A$ restricted to $\{\mu\}^{\perp}$. 

Finally note that every eigenvalue of $W$ must be real. Because $W$
is the product of two symmetric matrices $D^{2}\gamma^{\circ}(\mu),-D^{2}\rho(y)$;
and $D^{2}\gamma^{\circ}(\mu)$ is positive semidefinite.  Thus in
particular, $W$ is similar to a real triangular matrix. Hence the
determinant of $Q(z)=I-\gamma(z-y)W$ is the product of $1-\tilde{\kappa}\gamma(z-y)$,
where $\tilde{\kappa}$ varies among the eigenvalues of $W$. Therefore
$\det Q>0$ as desired.
\end{proof}

\begin{rem*}
Suppose the Assumption \ref{assu: 2} holds. Suppose $x\in U$, and
$y$ is one of the $\rho$-closest points to $x$ on $\partial U$.
It can be shown that the segment $]x,y[$ is the characteristic curve
associated to the first order PDE (\ref{eq: H-J eq}). We do not use
this fact in this article, but let us summarize what we have proved
so far about the segment $]x,y[$.
\begin{enumerate}
\item In Lemma \ref{lem: segment to the closest pt} we have shown that
$]x,y[\subset U$, and $y$ is the unique $\rho$-closest point to
every point of $]x,y[$. In particular we have $]x,y[\cap R_{\rho,0}=\emptyset$.
\item We have also seen that $\rho$ varies linearly along $]x,y[$, and
by (\ref{eq: D rho (x)}) we have $D\rho(z)=\mu(y)$ for every $z\in]x,y[$.
\item By (\ref{eq: K-normal}) we know that $]x,y[$ is parallel to $D\gamma^{\circ}(\mu(y))$.
We also have 
\[
D^{2}\rho(z)D\gamma^{\circ}(\mu(y))=0,
\]
for every $z\in]x,y[$. Because we have $W(y)D\gamma^{\circ}(\mu)=0$
by (\ref{eq: W,Q}),(\ref{eq: D2 rho Dg0 =00003D 0}). So we have
$Q(z)D\gamma^{\circ}(\mu)=D\gamma^{\circ}(\mu)$ due to (\ref{eq: W,Q}).
Hence we get the desired by (\ref{eq: D2 rho (x)}) and (\ref{eq: D2 rho Dg0 =00003D 0}).
Furthermore due to (\ref{eq: D2 rho (x)}), $D^{2}\gamma^{\circ}(\mu)D^{2}\rho(z)$
can be triangulated, and its triangular form is 
\[
\begin{bmatrix}\frac{-\tilde{\kappa}_{1}}{1-\gamma(z-y)\tilde{\kappa}_{1}} &  & \qquad\qquad\;* & *\\
 & \ddots &  & \begin{array}{c}
*\\
\\
\end{array}\\
0\qquad\;\: &  & \frac{-\tilde{\kappa}_{n-1}}{1-\gamma(z-y)\tilde{\kappa}_{n-1}}\\
0\qquad0 &  &  & 0
\end{bmatrix}\negthickspace,
\]
where $\tilde{\kappa}_{1},\dots,\tilde{\kappa}_{n-1},0$ are the eigenvalues
of $W(y)$. Remember that the eigenvalues of $W$ are all real, and
hence it can be triangulated, as we explained in the previous proof.
Also remember that $1-\gamma(z-y)\tilde{\kappa}_{j}>0$ by the previous
theorem. Notice the similarity between the above form and the classical
formula for $D^{2}d$ derived in {[}\citealp{MR1814364}, Section
14.6{]}. Although, we cannot necessarily find a similar form for $D^{2}\rho(z)$
itself.%
\item By the above theorem we know that $\det Q(z)\ne0$ for every $z\in]x,y[$.
Thus in particular we have $]x,y[\cap R_{\rho}=\emptyset$ due to
Theorem \ref{thm: ridge}. Hence by Theorem \ref{thm: rho is C^2},
$\rho$ is $C^{k,\alpha}$ on a neighborhood of $]x,y[$.
\end{enumerate}
Finally let us mention that for the function $G$ defined in the proof
of Theorem \ref{thm: rho is C^2} we have $\det G>0$. Because as
we have shown there we have 
\[
\det G=\langle D\gamma^{\circ}(\mu),\nu\rangle\det Q\,\det\begin{bmatrix}w_{1} & \cdots & w_{n-1} & \nu\end{bmatrix}\negthickspace,
\]
and we know that the first two factors are positive. The third factor
is also positive, since it can be shown that $\det\begin{bmatrix}w_{1} & \cdots & w_{n-1} & \nu\end{bmatrix}=\sqrt{\det(g_{ij})}$,
where $g_{ij}:=\langle w_{i},w_{j}\rangle$ are the components of
the Riemannian metric on $\partial U$. For the proof see for example
Lemma 4.10 of \citep{MR2336304}.
\end{rem*}
\begin{prop}
\label{prop: R =00003D R0 bar}Suppose the Assumption \ref{assu: 2}
holds. Then we have $R_{\rho}=\overline{R}_{\rho,0}$.
\end{prop}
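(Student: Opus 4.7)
The claim $R_{\rho} = \overline{R}_{\rho,0}$ splits into two inclusions; the easy one is $\overline{R}_{\rho,0} \subseteq R_{\rho}$, which follows immediately from $R_{\rho,0} \subseteq R_{\rho}$ (Lemma \ref{lem: rho not C1}) together with the closedness of $R_{\rho}$ in $U$. The latter is immediate from the definition: if $\rho$ is $C^{1,1}$ on some open neighborhood of $x$, the same neighborhood lies outside $R_{\rho}$, so $U \setminus R_{\rho}$ is open.

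The substantive inclusion is $R_{\rho} \subseteq \overline{R}_{\rho,0}$. Take $x_{0} \in R_{\rho} \setminus R_{\rho,0}$, so that $x_{0}$ has a unique $\rho$-closest point $y_{0}$, and Theorem \ref{thm: ridge} gives $\det Q(x_{0}) = 0$. My plan is to exploit the characteristic ray $x_{s} := y_{0} + s\, D\gamma^{\circ}(\mu(y_{0}))$, so that $x_{s_{0}} = x_{0}$ for $s_{0} := \gamma(x_{0} - y_{0})$, and detect the onset of non-uniqueness just past $s_{0}$. The key observation is that for every $s > s_{0}$ with $x_{s} \in U$, $y_{0}$ cannot remain a $\rho$-closest point to $x_{s}$; for if it did, Theorem \ref{thm: det Q > 0} applied to the pair $(x_{s}, y_{0})$ would give $\det Q(z) > 0$ for every $z \in {]}y_{0}, x_{s}{[}$, including $z = x_{0}$, contradicting $\det Q(x_{0}) = 0$. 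Consequently, for any sequence $s_{n} \downarrow s_{0}$ with $x_{s_{n}} \in U$, any $\rho$-closest point $\tilde{y}_{n}$ of $x_{s_{n}}$ satisfies $\tilde{y}_{n} \ne y_{0}$, and Lemma \ref{lem: cont of y}(a), combined with the uniqueness of $y_{0}$ for $x_{0}$, forces $\tilde{y}_{n} \to y_{0}$.

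To produce points of $R_{\rho,0}$ close to $x_{0}$, I would interpolate along the segment $[x_{0}, x_{s_{n}}]$. Set
\[
\tau_{n} := \sup\{t \in [0,1] : y_{0} \text{ is a }\rho\text{-closest point to } x_{0} + t(x_{s_{n}} - x_{0})\};
\]
by continuity this supremum is attained, with $0 \le \tau_{n} < 1$, and $z_{n} := x_{0} + \tau_{n}(x_{s_{n}} - x_{0})$ satisfies $z_{n} \to x_{0}$ with $y_{0}$ a $\rho$-closest point to $z_{n}$. For $t$ slightly exceeding $\tau_{n}$ the $\rho$-closest points to $x_{0} + t(x_{s_{n}} - x_{0})$ differ from $y_{0}$, and extracting a subsequential limit as $t \downarrow \tau_{n}$, Lemma \ref{lem: cont of y}(a) furnishes a further $\rho$-closest point $y_{n}^{*}$ to $z_{n}$. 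If $y_{n}^{*} \ne y_{0}$, then $z_{n} \in R_{\rho,0}$ and the desired sequence follows.

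The main obstacle I anticipate is the residual case where $y_{n}^{*} = y_{0}$ for infinitely many $n$, i.e., where the $\rho$-closest point switches continuously from $y_{0}$ to a nearby boundary point as one crosses $z_{n}$. In this situation I plan to verify, via a blow-up computation paralleling the end of the proof of Theorem \ref{thm: ridge}, that $\rho$ itself fails to be $C^{1,1}$ at $z_{n}$, so each $z_{n}$ is again a focal point in $R_{\rho} \setminus R_{\rho,0}$ at which the entire construction can be iterated; a diagonal extraction then still yields an $R_{\rho,0}$-sequence converging to $x_{0}$. An alternative route is to perturb transversally in the kernel direction $\zeta$ of $W(y_{0}) - \frac{1}{s_{0}} I$ provided by $\det Q(x_{0}) = 0$: this destroys the uniqueness of the minimizer of $y' \mapsto \gamma(\,\cdot\, - y') + \varphi(y')$ at $y_{0}$ and should directly produce nearby configurations with two distinct $\rho$-closest points on $\partial U$.
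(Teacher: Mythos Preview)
Your easy inclusion is fine, and your key observation is correct: once $\det Q(x_{0})=0$, Theorem~\ref{thm: det Q > 0} forces $y_{0}$ to fail as a $\rho$-closest point to every $x_{s}$ with $s>s_{0}$. But precisely this observation makes your main construction collapse. The segment $[x_{0},x_{s_{n}}]$ lies on the ray $\{x_{s}:s\ge s_{0}\}$, so for every $t\in(0,1]$ the point $x_{0}+t(x_{s_{n}}-x_{0})$ equals some $x_{s}$ with $s>s_{0}$, and $y_{0}$ is \emph{not} a $\rho$-closest point there. Hence $\tau_{n}=0$ and $z_{n}=x_{0}$ for every $n$. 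The subsequential limit $y_{n}^{*}$ you extract is then a $\rho$-closest point to $x_{0}$ itself, and since $x_{0}\notin R_{\rho,0}$ this forces $y_{n}^{*}=y_{0}$. You are therefore always in your ``residual case,'' and the proposed iteration simply reproduces $x_{0}$; there is nothing to diagonalize over. Your alternative transversal-perturbation idea in the $\zeta$-direction is the right intuition (this is the classical picture of a focal point as a limit of cut points), but turning it into a rigorous argument that a nearby point has two distinct $\rho$-closest boundary points is nontrivial and is not something you can read off from the formulas already established.

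The paper avoids this difficulty by arguing contrapositively and topologically. It takes $x\notin\overline{R}_{\rho,0}$, so the $\rho$-closest-point map $p$ is single-valued and continuous on a ball $B_{s}(x)$. Composing with a local chart of $\partial U$ gives a continuous map $\partial B_{s}(x)\to\mathbb{R}^{n-1}$; by Borsuk--Ulam there exist antipodal points $x^{\pm}=x\pm sw$ with $p(x^{-})=p(x^{+})=:y$. Then $x$ lies strictly between $y$ and $x^{+}$ on the characteristic ray from $y$, so Theorem~\ref{thm: det Q > 0} gives $\det Q(x)>0$, whence $x\notin R_{\rho}$ by Theorem~\ref{thm: ridge}. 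The point is that to place $x$ in the \emph{interior} of a characteristic segment one needs a point \emph{beyond} $x$ on the ray that still has $y$ as its $\rho$-closest point, and Borsuk--Ulam supplies exactly that without any delicate perturbation analysis.
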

\begin{proof}
Note that $R_{\rho}\subset U$ is closed in $U$ by definition, and
it has a positive distance from $\partial U$ due to Theorem \ref{thm: rho is C2 at y}.
Hence $R_{\rho}$ is closed. We also know that $R_{\rho,0}\subset R_{\rho}$.
So we have $\overline{R}_{\rho,0}\subset R_{\rho}$. Thus we only
need to show that $R_{\rho}\subset\overline{R}_{\rho,0}$. Let $x\in U-\overline{R}_{\rho,0}$.
We will show that $x\notin R_{\rho}$. There is $r>0$ such that $B_{r}(x)\subset U-\overline{R}_{\rho,0}$.
For every $z\in B_{r}(x)$ let $p(z)\in\partial U$ be the unique
$\rho$-closest point to $z$ on $\partial U$. Then Lemma \ref{lem: cont of y}
implies that $p$ is continuous on $B_{r}(x)$. 

Let $\mathrm{z}\mapsto Y(\mathrm{z})$ be a smooth parametrization
of $\partial U$ around $Y(0)=p(x)$, where $\mathrm{z}$ varies in
an open set $V\subset\R^{n-1}$. Then $Y^{-1}:Y(V)\to V$ is a continuous
bijection. There is $0<s<r$ such that $p(B_{s}(x))\subset Y(V)$,
since $p$ is continuous. Now $Y^{-1}\circ p:\partial B_{s}(x)\to V\subset\R^{n-1}$
is a continuous map. Hence by Borsuk\textendash Ulam theorem (see
Corollary 2B.7 of \citep{hatcher2002algebraic}) there are $x^{-}:=x-sw$
and $x^{+}:=x+sw$, for some $w\in\partial B_{1}(0)$, such that 
\[
Y^{-1}\circ p(x^{-})=Y^{-1}\circ p(x^{+}).
\]
But $Y^{-1}$ is one to one, so $p(x^{-})=p(x^{+})$. Let $y:=p(x^{\pm})$.
Then by (\ref{eq: parametrize by rho}) we have 
\[
x^{\pm}=y+\big(\rho(x^{\pm})-\varphi(y)\big)\,D\gamma^{\circ}(\mu(y)).
\]
Thus $x^{-},x^{+}$ are on the ray emanating from $y$ in the direction
of $D\gamma^{\circ}(\mu)$. Suppose for example $x^{-}\in]y,x^{+}[$.
On the other hand, it is obvious that $x\in]x^{-},x^{+}[$. So we
get $x\in]y,x^{+}[$. Hence by Theorem \ref{thm: det Q > 0} we have
$\det Q(x)\ne0$, and therefore $x\notin R_{\rho}$ due to Theorem
\ref{thm: rho is C^2}, as desired. Note that by Lemma \ref{lem: segment to the closest pt}
it also follows that $p(x)=y$. This fact is not needed in this proof,
but we will use it in the next remark.
\end{proof}
\begin{rem*}
Suppose the Assumption \ref{assu: 2} holds. Let $y\in\partial U$.
Then by Theorem \ref{thm: rho is C2 at y} we know that $y$ is the
$\rho$-closest point to some points in $U$. By (\ref{eq: parametrize by rho}),
these points must lie on the ray emanating from $y$ in the direction
of $D\gamma^{\circ}(\mu(y))$. By Lemma \ref{lem: segment to the closest pt}
we know that if $x$ on this ray has $y$ as a $\rho$-closest point
on $\partial U$, then every point between $x,y$ belongs to $U$,
and has $y$ as its unique $\rho$-closest point on $\partial U$.
On the other hand, this ray will intersect $\partial U$ at a point
other than $y$, because $U$ is bounded. Let $\tilde{y}$ be the
closest of such intersection points to $y$. Then by Lemma \ref{lem: cont of y},
the points on the ray near $\tilde{y}$ (which are inside $U$) cannot
have $y$ as their $\rho$-closest point on $\partial U$, since their
$\rho$-closest points must converge to $\tilde{y}$ as they approach
$\tilde{y}$.

Let $x$ be the supremum of the points on the ray that are inside
$U$, and have $y$ as a $\rho$-closest point on $\partial U$. Note
that by Lemma \ref{lem: cont of y}, $y$ is one of the $\rho$-closest
points to $x$ on $\partial U$, since we can approach $x$ with points
having $y$ as their $\rho$-closest point on $\partial U$. On the
other hand, $x$ must belong to $R_{\rho}=\overline{R}_{\rho,0}$.
Because as shown in the above proof, if $x\notin\overline{R}_{\rho,0}$
then there is another point $x^{+}$ on the ray that has $y$ as a
$\rho$-closest point on $\partial U$, and $x\in]y,x^{+}[$; which
contradicts our choice of $x$. Hence to summarize, along the ray
emanating from $y$ in the direction of $D\gamma^{\circ}(\mu(y))$,
the only points that have $y$ as their  $\rho$-closest point on
$\partial U$ are those points which lie between $y$ and the closest
point to $y$ on the intersection of $R_{\rho}$ and the ray.
\end{rem*}
The next lemma is needed in the next section, when we deal with the
regularity of $u$. It states that the pure second order partial
derivatives of $\rho$ satisfy a monotonicity property. 
\begin{lem}
\label{lem: D2 rho decreas}Suppose the Assumption \ref{assu: 2}
holds. Let $x\in U-R_{\rho}$, and let $y$ be the unique $\rho$-closest
point to $x$ on $\partial U$. Let $A$ be a symmetric positive semidefinite
matrix. Then we have 
\[
\mathrm{tr}[AD^{2}\rho(x)]\le\mathrm{tr}[AD^{2}\rho(y)],
\]
In particular we have $D_{\xi\xi}^{2}\rho(x)\le D_{\xi\xi}^{2}\rho(y)$,
for every $\xi\in\R^{n}$.
\end{lem}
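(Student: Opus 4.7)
The plan is to reduce to the scalar case $\xi\in\R^{n}$ via an eigenbasis decomposition of $A$, and then deduce the scalar statement from the explicit formula (\ref{eq: D2 rho (x)}) by differentiating along the line segment from $y$ to $x$.

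First, parametrize that segment. By (\ref{eq: K-normal}) the point $x$ lies on the ray emanating from $y$ in the direction $D\gamma^{\circ}(\mu(y))$; set
\[
z(s):=y+sD\gamma^{\circ}(\mu(y)),\qquad s\in[0,s_{0}],\quad s_{0}:=\gamma(x-y).
\]
Since $\gamma(z(s)-y)=s\gamma(D\gamma^{\circ}(\mu))=s$ by (\ref{eq: g0 (Dg)=00003D1}), and since by the remark following Theorem~\ref{thm: det Q > 0} we have $]y,x[\,\cap R_{\rho}=\emptyset$ with $\det Q(z(s))>0$ on $[0,s_{0})$, Theorem~\ref{thm: rho is C^2} applies all along $[0,s_{0}]$ (the endpoint $x$ is handled because $x\notin R_{\rho}$ forces $\det Q(x)\ne0$ by Theorem~\ref{thm: ridge}, and the endpoint $y$ by Theorem~\ref{thm: rho is C2 at y}). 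Writing $H:=D^{2}\rho(y)$ and $B:=D^{2}\gamma^{\circ}(\mu(y))$, so that $W(y)=-BH$, formula (\ref{eq: D2 rho (x)}) gives
\[
M(s):=D^{2}\rho(z(s))=H(I-sW)^{-1}=H(I+sBH)^{-1}.
\]

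Next, differentiate. A direct computation (using that $H$ commutes with polynomials in $BH$ only in the sense of associativity) yields
\[
M'(s)=-H(I+sBH)^{-1}BH(I+sBH)^{-1}=-M(s)\,B\,M(s).
\]
Since $M(s)$ is a Hessian it is symmetric, and $B=D^{2}\gamma^{\circ}(\mu)$ is positive semidefinite by Subsection~\ref{subsec: Reg gaug}. Hence for every $\xi\in\R^{n}$, setting $q_{\xi}(s):=\xi^{T}M(s)\xi=D_{\xi\xi}^{2}\rho(z(s))$, we get
\[
q_{\xi}'(s)=-\xi^{T}M(s)BM(s)\xi=-\bigl(M(s)\xi\bigr)^{T}B\bigl(M(s)\xi\bigr)\le0.
\]
Integrating from $0$ to $s_{0}$ gives the scalar monotonicity $D_{\xi\xi}^{2}\rho(x)\le D_{\xi\xi}^{2}\rho(y)$.

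Finally, for a general symmetric positive semidefinite $A$, let $\xi_{1},\dots,\xi_{n}$ be an orthonormal eigenbasis of $A$ with nonnegative eigenvalues $a_{1},\dots,a_{n}$. Then $\mathrm{tr}[AD^{2}\rho(z)]=\sum_{j}a_{j}D_{\xi_{j}\xi_{j}}^{2}\rho(z)$, and summing the scalar inequality with weights $a_{j}\ge0$ produces $\mathrm{tr}[AD^{2}\rho(x)]\le\mathrm{tr}[AD^{2}\rho(y)]$. The only subtle point, which is the real content, is the verification that the segment $[y,x]$ lies entirely in the region where the representation (\ref{eq: D2 rho (x)}) is valid; this is exactly what the remark after Theorem~\ref{thm: det Q > 0} provides, together with Theorem~\ref{thm: ridge} at the endpoint $x$. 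Everything else is a clean ODE calculation mirroring the general Hamilton\textendash Jacobi monotonicity identity (\ref{eq: ODE D2 rho}) specialized to $\rho$.
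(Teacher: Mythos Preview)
Your proof is correct and follows essentially the same route as the paper: parametrize the segment $[y,x]$ by $s\mapsto y+sD\gamma^{\circ}(\mu)$, use Theorems~\ref{thm: det Q > 0}, \ref{thm: rho is C^2}, and \ref{thm: ridge} to ensure the explicit formula (\ref{eq: D2 rho (x)}) is valid along it, and differentiate to obtain $M'(s)=-M(s)D^{2}\gamma^{\circ}(\mu)M(s)$, which is negative semidefinite. The only cosmetic difference is that the paper handles a general symmetric positive semidefinite $A$ directly via the $\sqrt{A}$ trick and cyclicity of trace, deducing the scalar case $A=\xi\otimes\xi$ afterward, whereas you prove the scalar inequality first and then sum over an eigenbasis of $A$.
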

\begin{rem*}
As we mentioned in the introduction, the above monotonicity property
is true because $\rho$ satisfies the Hamilton-Jacobi equation (\ref{eq: H-J eq}),
and  the segment $]x,y[$ is the characteristic curve associated
to it.
\end{rem*}
\begin{proof}
By (\ref{eq: parametrize by rho}) we know that $t\mapsto y+tD\gamma^{\circ}(\mu)$
parametrizes the segment $[y,x]$, when $t$ goes from $0$ to $\gamma(x-y)$.
We also know that $\rho$ is $C^{2}$ on a neighborhood of $[y,x]$,
due to the Theorems \ref{thm: det Q > 0} and \ref{thm: rho is C^2}.
Thus to prove the lemma it suffices to show that 
\[
\frac{d}{dt}\mathrm{tr}[AD^{2}\rho(y+tD\gamma^{\circ}(\mu))]\le0.
\]
To simplify the notation set $\tilde{\mu}:=D\gamma^{\circ}(\mu)$.
Also let $\sqrt{A}$ be the unique symmetric positive semidefinite
matrix whose square is $A$. Then by (\ref{eq: W,Q}),(\ref{eq: D2 rho (x)})
we have 
\begin{align*}
\frac{d}{dt}\mathrm{tr}[AD^{2}\rho(y+t\tilde{\mu})] & =\mathrm{tr}\big[A\frac{d}{dt}D^{2}\rho(y+t\tilde{\mu})\big]=\mathrm{tr}\big[A\frac{d}{dt}\big(D^{2}\rho(y)(I-tW)^{-1}\big)\big]\\
 & =\mathrm{tr}\big[AD^{2}\rho(y)(I-tW)^{-1}W(I-tW)^{-1}\big]\\
 & =-\mathrm{tr}\big[AD^{2}\rho(y)(I-tW)^{-1}D^{2}\gamma^{\circ}(\mu)D^{2}\rho(y)(I-tW)^{-1}\big]\\
 & =-\mathrm{tr}\big[\sqrt{A}\sqrt{A}D^{2}\rho(y+t\tilde{\mu})D^{2}\gamma^{\circ}(\mu)D^{2}\rho(y+t\tilde{\mu})\big]\\
 & =-\mathrm{tr}\big[\sqrt{A}D^{2}\rho(y+t\tilde{\mu})D^{2}\gamma^{\circ}(\mu)D^{2}\rho(y+t\tilde{\mu})\sqrt{A}\big]\le0.
\end{align*}
Note that the matrix in the last line of the above formula is positive
semidefinite, since $D^{2}\gamma^{\circ}(\mu)$ is positive semidefinite,
and the other factors are symmetric. Finally, to get the last statement
of lemma we just need to set $A=\xi\otimes\xi$, which is trivially
a symmetric positive semidefinite matrix. Then it is easy to check
that $\mathrm{tr}[AD^{2}\rho]=D_{\xi\xi}^{2}\rho$.
\end{proof}
\begin{rem*}
At the end of this section, we would like to comment on the structure
of the set $R_{\rho}$, particularly its Hausdorff dimension. Suppose
additionally that $\gamma^{\circ},\varphi$ are $C^{\infty}$. Moreover
suppose that $U$ is connected, and $\partial U$ is $C^{2,1}$. Let
$H(x,p):=\gamma^{\circ}(p+D\varphi(x))$. Then $v(x):=\rho(x)-\varphi(x)$
satisfies the following Hamilton-Jacobi equation 
\[
\begin{cases}
H(x,Dv)=\gamma^{\circ}(D\rho)=1 & \textrm{in }U,\\
v=0 & \textrm{on }\partial U.
\end{cases}
\]
In addition, the sets $\{p\in\R^{n}:H(x,p)<1\}=-D\varphi(x)+\mathrm{int}(K^{\circ})$
are smooth strictly convex sets containing $0$; because we have $\gamma^{\circ}(D\varphi)<1$.
Furthermore, by (\ref{eq: phi strct Lip}), for every $x\in U$ we
have 
\[
v(x)=\rho(x)-\varphi(x)=\gamma(x-y)+\varphi(y)-\varphi(x)>0,
\]
where $y$ is the $\rho$-closest point on $\partial U$ to $x$.
Also note that the set of singularities of $v$ equals $R_{\rho}$,
since $\varphi$ is smooth. Therefore we can apply the result of \citet{MR2094267},
and conclude that 
\[
\mathcal{H}^{n-1}(R_{\rho})<\infty,
\]
where $\mathcal{H}^{n-1}$ is the $n-1$-dimensional Hausdorff measure.
It also follows that $R_{\rho}$ can be covered by countably many
$n-1$-dimensional $C^{1}$ submanifolds, except for a set whose $\mathcal{H}^{n-1}$
measure is zero. In other words, $R_{\rho}$ is $C^{1}$-rectifiable.
It is also shown in \citep{MR2094267} that $R_{\rho}$ is path connected.
In addition, if we merely assume that $\gamma^{\circ},\varphi,\partial U$
are $C^{2,\alpha}$ for some $\alpha>0$, then we can repeat the arguments
in \citep{MR2336304}, and conclude that the Hausdorff dimension of
$R_{\rho}$ is at most $n-\alpha$. However, we will not use these
properties of $R_{\rho}$; so we will not provide the details here.
Finally let us also mention the interesting technique of using projections
in convex geometry to estimate the Hausdorff dimension of the singular
set. See \citep{indrei2013free,indrei2016regularity} for applications
of this technique to the study of the singular set in a double obstacle
problem related to the Monge-Ampere equation.%
\end{rem*}

\section{\label{sec: Global-Optimal-Reg}Global Optimal Regularity}

In this section we prove our main results, aka Theorems \ref{thm: Reg NonConv dom}
and \ref{thm: Reg Conv dom}. We will prove that $u$ belongs to
$C^{1,1}(\overline{U})$, without assuming any regularity about $K$.
To this end, first we prove Theorem \ref{thm: ridge is elastic},
which says that when $\partial K$ is smooth enough, $u$ does not
touch the obstacles $\rho,-\bar{\rho}$ at their singularities.
\begin{proof}[\textbf{Proof of Theorem \ref{thm: ridge is elastic}}]
 We have already shown in Proposition \ref{prop: ridge0 elastic}
that $R_{\rho,0},R_{\bar{\rho},0}$ do not intersect $P^{+},P^{-}$
respectively. Note that as we discussed in the beginning of the last
section, the Assumptions of Theorem \ref{thm: ridge is elastic} imply
the assumptions of Proposition \ref{prop: ridge0 elastic}, since
by Theorem \ref{thm: Reg u} we know that $u\in C_{\mathrm{loc}}^{1,1}(U)$.
So we only need to show that $R_{\rho}-R_{\rho,0},R_{\bar{\rho}}-R_{\bar{\rho},0}$
do not intersect $P^{+},P^{-}$ respectively. Suppose to the contrary
that there is a point $x\in U$ which belongs to $(R_{\rho}-R_{\rho,0})\cap P^{+}$;
the other case is similar. Let $y$ be the unique $\rho$-closest
point to $x$ on $\partial U$. Then by Theorem \ref{thm: ridge}
we must have $\det Q(x)=0$. Now the definition of $Q$ implies that
$\tilde{\kappa}:=\frac{1}{\rho(x)-\varphi(y)}=\frac{1}{\gamma(x-y)}>0$
is an eigenvalue of $W(y)$. Suppose $\zeta$ is the corresponding
eigenvector of $W$, and $|\zeta|=1$. Note that $\zeta$ is not parallel
to the segment $]x,y[$, i.e. to the vector $D\gamma^{\circ}(\mu)$;
because we have $WD\gamma^{\circ}(\mu)=0$.

Let $z\in]x,y[$. Then by Lemma \ref{lem: segment to the closest pt}
we have $z\in U$, and $y$ is the unique $\rho$-closest point on
$\partial U$ to $z$. In addition, by Theorem \ref{thm: det Q > 0}
we have $\det Q(z)\ne0$. Hence by Theorem \ref{thm: rho is C^2},
$\rho$ is $C^{k,\alpha}$ on a neighborhood of the line segment $]x,y[$.
We call this neighborhood $V$. Furthermore, for $z\in]x,y[$ we
have 
\[
Q(z)=I-\gamma(z-y)W=\frac{1}{s\tilde{\kappa}}\big(s\tilde{\kappa}I-W\big),
\]
where $s>1$ is such that $s(z-y)=x-y$. Hence we have $Q(z)\zeta=\frac{1}{s\tilde{\kappa}}(s-1)\tilde{\kappa}\zeta=\frac{s-1}{s}\zeta$.
Then by (\ref{eq: D2 rho (x)}) we have 
\[
D_{\zeta\zeta}^{2}\rho(z)=\zeta D^{2}\rho(z)\zeta=\zeta D^{2}\rho(y)Q(z)^{-1}\zeta=\frac{s}{s-1}\zeta D^{2}\rho(y)\zeta.
\]
We claim that $\zeta D^{2}\rho(y)\zeta<0$. The reason is that for
$\xi:=D^{2}\rho(y)\zeta$ we have $D^{2}\gamma^{\circ}(\mu)\xi=-W\zeta=-\tilde{\kappa}\zeta$.
On the other hand, we know that the eigenvalues of $D^{2}\gamma^{\circ}(\mu)$
are all positive except for one $0$ which corresponds to the eigenvector
$\mu$. We also know that the other eigenvectors of $D^{2}\gamma^{\circ}(\mu)$
are orthogonal to $\mu$, since it is a symmetric matrix. In addition,
as shown in the proof of Theorem \ref{thm: det Q > 0}, $\zeta$ is
orthogonal to $\mu$, since it is an eigenvector of $W$ corresponding
to a nonzero eigenvalue. Let $(\zeta_{1},\dots,\zeta_{n-1},0)$ and
$(\xi_{1},\dots,\xi_{n})$ be the coordinates of $\zeta,\xi$ in the
orthonormal basis consisting of the eigenvectors of $D^{2}\gamma^{\circ}(\mu)$.
Let $\tau_{1},\dots,\tau_{n-1},0$ be the corresponding eigenvalues
of $D^{2}\gamma^{\circ}(\mu)$. Then we have 
\[
(\tau_{1}\xi_{1},\dots,\tau_{n-1}\xi_{n-1},0)=D^{2}\gamma^{\circ}(\mu)\xi=-\tilde{\kappa}\zeta=(-\tilde{\kappa}\zeta_{1},\dots,-\tilde{\kappa}\zeta_{n-1},0).
\]
Hence we have $\xi_{j}=-\frac{\tilde{\kappa}}{\tau_{j}}\zeta_{j}$
for $j<n$. Therefore 
\[
\zeta D^{2}\rho(y)\zeta=\langle\zeta,\xi\rangle=-\tilde{\kappa}\sum_{j<n}\tfrac{1}{\tau_{j}}|\zeta_{j}|^{2}<0,
\]
as desired. As a consequence we have 
\begin{equation}
D_{\zeta\zeta}^{2}\rho(z)=\frac{s}{s-1}\zeta D^{2}\rho(y)\zeta\to-\infty\qquad\textrm{ as }\;z\to x,\label{eq: D2 rho to - infty}
\end{equation}
since $s\to1^{+}$. 

Now since $x\in P^{+}$ we have $u(x)=\rho(x)$. Hence by lemma \ref{lem: segment is plastic}
we have $[x,y[\subset P^{+}$. Thus $u(z)=\rho(z)$ for every $z\in]x,y[$.
Also remember that $u\le\rho$ everywhere, since $u\in W_{\bar{\rho},\rho}$.
Hence $\rho-u$ is a $C^{1}$ function on $V$, which attains its
maximum, $0$, on $]x,y[$. Thus $Du=D\rho$ on the segment $]x,y[$. 

Now we claim that for any $z\in]x,y[$ there are points $z_{i}:=z+\varepsilon_{i}\zeta$
in $V$ converging to $z$, at which we have 
\[
D_{\zeta}u(z_{i})\le D_{\zeta}\rho(z_{i}).
\]
Since otherwise we would have $D_{\zeta}u>D_{\zeta}\rho$ on a segment
of the form $]z,z+r\zeta[$, for some small $r>0$. But as $u(z)=\rho(z)$
and $Du(z)=D\rho(z)$, this implies that $u>\rho$ on $]z,z+r\zeta[$;
which is a contradiction. Thus we get the desired. As a consequence
we have 
\[
D_{\zeta}u(z_{i})-D_{\zeta}u(z)\le D_{\zeta}\rho(z_{i})-D_{\zeta}\rho(z).
\]
By applying the mean value theorem to the restriction of $\rho$ to
the segment $[z,z_{i}]$, we get 
\begin{equation}
D_{\zeta}u(z_{i})-D_{\zeta}u(z)\le|z_{i}-z|D_{\zeta\zeta}^{2}\rho(w_{i}),\label{eq: upper bd on D2u}
\end{equation}
for some $w_{i}\in]z,z_{i}[$. 

On the other hand, $u$ is a $C^{1,1}$ function on a neighborhood
of $x$, due to Theorem \ref{thm: Reg u}. Consequently there is $M>0$
such that 
\begin{equation}
-M\le\frac{D_{\zeta}u(z_{i})-D_{\zeta}u(z)}{|z_{i}-z|},\label{eq: lower bd on D2u}
\end{equation}
for distinct $z,z_{i}$ sufficiently close to $x$. Now let $z\in]x,y[$
be close enough to $x$ so that $D_{\zeta\zeta}^{2}\rho(z)<-3M$,
which is possible due to (\ref{eq: D2 rho to - infty}). Then let
$z_{i}=z+\varepsilon_{i}\zeta$ be close enough to $z$ so that we
have $D_{\zeta\zeta}^{2}\rho(w_{i})<-2M$, which is possible due to
the continuity of $D^{2}\rho$ on $V$. But this is in contradiction
with (\ref{eq: upper bd on D2u}) and (\ref{eq: lower bd on D2u}).
\end{proof}
Before presenting the proof of Theorem \ref{thm: Reg NonConv dom},
let us state a few remarks about it.
\begin{rem*}
Note that the assumptions of the theorem also hold when we replace
$K,\varphi,K^{\circ}$ by $-K,-\varphi$ and $(-K)^{\circ}=-K^{\circ}$.
In particular note that if $D\varphi\in\partial K^{\circ}$, i.e.
if $\gamma^{\circ}(D\varphi(y))=1$, then we have $-D\varphi\in-\partial K^{\circ}=\partial(-K^{\circ})$;
and vice versa. In addition, it is easy to see that 
\[
\mathrm{v}\in N(K^{\circ},D\varphi(y))\iff-\mathrm{v}\in N(-K^{\circ},-D\varphi(y)).
\]
So as a result, $\rho,\bar{\rho}$ will have the same properties.
\end{rem*}
\begin{rem*}
When $\gamma^{\circ}(D\varphi(y))=1$, and $\mathrm{v}\in N(K^{\circ},D\varphi(y))$
is nonzero, then we do not necessarily have $\langle\mathrm{v},\nu(y)\rangle>0$,
in contrast to (\ref{eq: Dg0 . n > 0}). Because $D\varphi(y)$ can
be different from $\mu(y)$, if we define $\mu$ in a continuous way.%
\end{rem*}
\begin{rem*}
As we will see in the following proof, in order to show that $u\in W^{2,p}(U)\cap W_{\mathrm{loc}}^{2,\infty}(U)$
for every $p<\infty$, we only need $\partial U,\varphi$ to be $C^{2}$.
But we need their $C^{2,\alpha}$ regularity to be able to apply the
result of \citep{indrei2016nontransversal}, and conclude the optimal
regularity of $u$ up to the boundary.
\end{rem*}
\begin{proof}[\textbf{Proof of Theorem \ref{thm: Reg NonConv dom}}]
 As shown in \citep{MR0467623}, a compact convex set with nonempty
interior can be approximated, in the Hausdorff metric, by a sequence
of compact convex sets with nonempty interior which have $C^{2}$
boundaries with positive curvature. We can scale each element of such
approximating sequence, to make the sequence a shrinking one. We apply
this result to $K^{\circ}$. Thus there is a sequence $K_{k}^{\circ}$
of compact convex sets, that have $C^{2}$ boundaries with positive
curvature, and 
\begin{eqnarray*}
K_{k+1}^{\circ}\subset\mathrm{int}(K_{k}^{\circ}), & \qquad & K^{\circ}={\textstyle \bigcap}K_{k}^{\circ}.
\end{eqnarray*}
Notice that we can take the approximations of $K^{\circ}$ to be
the polar of other convex sets, because the double polar of a compact
convex set with $0$ in its interior is itself. Also note that $K_{k}$'s
are strictly convex compact sets with $0$ in their interior, which
have $C^{2}$ boundaries with positive curvature. Furthermore we have
$K=(K^{\circ})^{\circ}\supset K_{k+1}\supset K_{k}$. For the proof
of these facts see {[}\citealp{MR3155183}, Sections 1.6, 1.7 and
2.5{]}. 

To simplify the notation we use $\gamma_{k},\gamma_{k}^{\circ},\rho_{k},\bar{\rho}_{k}$
instead of $\gamma_{K_{k}},\gamma_{K_{k}^{\circ}},\rho_{K_{k},\varphi},\bar{\rho}_{K_{k},\varphi}$,
respectively. Note that $K_{k},U,\varphi$ satisfy the Assumption
\ref{assu: 2}. In particular we have $\gamma_{k}^{\circ}(D\varphi)<1$,
since $D\varphi\in K^{\circ}\subset\mathrm{int}(K_{k}^{\circ})$.
Let $u_{k}$ be the minimizer of $J$ over $W_{K_{k}^{\circ},\varphi}(U)$.
Then by Theorem \ref{thm: Reg u} we have $u_{k}\in W_{\textrm{loc}}^{2,\infty}(U)$.
We also have 
\begin{eqnarray*}
-\bar{\rho}_{1}\le-\bar{\rho}_{k}\le u_{k}\le\rho_{k}\le\rho_{1}, & \qquad & Du_{k}\in K_{k}^{\circ}\subset K_{1}^{\circ}\quad\textrm{ a.e.}.
\end{eqnarray*}
Note that $\rho_{k}\le\rho_{1}$ and $\bar{\rho}_{k}\le\bar{\rho}_{1}$,
since $\gamma_{k}\le\gamma_{1}$ due to $K_{k}\supset K_{1}$. Therefore
$u_{k}$ is a bounded sequence in $W^{1,\infty}(U)=C^{0,1}(\overline{U})$.
Hence by the Arzela-Ascoli Theorem a subsequence of $u_{k}$, which
we still denote by $u_{k}$, uniformly converges to a continuous function
$\tilde{u}\in C^{0}(\overline{U})$. Note that $\tilde{u}|_{\partial U}=\varphi$,
because $u_{k}|_{\partial U}=\varphi$ for every $k$. 

We break the rest of this proof into two parts. In Part I we derive
the uniform bound (\ref{eq: 1 in reg Nonconv dom}). And in Part II
we obtain the regularity of $u$ by using this bound.\medskip{}

PART I:\medskip{}

Let $R_{k}$ be the $\rho_{k}$-ridge, and let $E_{k},P_{k}^{\pm}$
be the elastic and plastic regions of $u_{k}$. Let us show that 
\begin{equation}
\|D_{i}(D_{i}F(Du_{k}))\|_{L^{\infty}(U)}\le C,\label{eq: 1 in reg Nonconv dom}
\end{equation}
for some $C$ independent of $k$. To see this, note that by (\ref{eq: diff ineq})
on $E_{k}$ we have 
\[
D_{i}(D_{i}F(Du_{k}))=g'(u_{k}).
\]
But $u_{k}$ is uniformly bounded independently of $k$. So by (\ref{eq: Bnds})
we get the desired bound on $E_{k}$. Next consider $P_{k}^{+}$.
Again by (\ref{eq: diff ineq}) we have 
\[
D_{i}(D_{i}F(Du_{k}))\ge g'(u_{k})\qquad\textrm{ a.e. on }P_{k}^{+}.
\]
Thus similarly we have a lower bound for $D_{i}(D_{i}F(Du_{k}))$
on $P_{k}^{+}$, which is independent of $k$. 

On the other hand, since $P_{k}^{+}$ does not intersect $R_{k}$
due to Theorem \ref{thm: ridge is elastic}, $\rho_{k}$ is at least
$C^{2}$ on $P_{k}^{+}$. Let $\lambda_{k},\mu_{k}$ be defined by
(\ref{eq: lambda}),(\ref{eq: mu}) respectively, when we use $\gamma_{k}^{\circ}$
instead of $\gamma^{\circ}$. Now as $u_{k}=\rho_{k}$ on $P_{k}^{+}$,
Theorem 4.4 of \citep{MR3409135} implies that for a.e. $x\in P_{k}^{+}$
we have $D_{i}(D_{i}F(Du_{k}(x)))=D_{i}(D_{i}F(D\rho_{k}(x)))$. Hence
by Lemma \ref{lem: D2 rho decreas} we have 
\begin{align*}
D_{i}(D_{i}F(Du_{k}(x))) & =D_{i}(D_{i}F(D\rho_{k}(x)))=D_{ij}^{2}F(D\rho_{k}(x))D_{ij}^{2}\rho_{k}(x)\\
 & =\mathrm{tr}[D^{2}F(\mu_{k})D^{2}\rho_{k}(x)]\le\mathrm{tr}[D^{2}F(\mu_{k})D^{2}\rho_{k}(y)],
\end{align*}
where $y$ is the $\rho_{k}$-closest point on $\partial U$ to $x$.
But by (\ref{eq: Bnds}) we know that $D^{2}F$ is bounded. So we
only need to show that $D^{2}\rho_{k}$ is bounded on $\partial U$,
independently of $k$. However by (\ref{eq: D2 rho (y)}), for every
$y\in\partial U$ we have 
\[
D^{2}\rho_{k}(y)=(I-X_{k}^{T})\big(D^{2}\varphi(y)+\lambda_{k}(y)D^{2}d(y)\big)(I-X_{k}),
\]
where $X_{k}:=\frac{1}{\langle D\gamma_{k}^{\circ}(\mu_{k}),\nu\rangle}D\gamma_{k}^{\circ}(\mu_{k})\otimes\nu$.
It is obvious that $D^{2}\varphi,D^{2}d$ are bounded on $\partial U$,
independently of $k$. In addition, note that $\gamma_{k}^{\circ}\ge\gamma_{1}^{\circ}$,
since $K_{k}^{\circ}\subset K_{1}^{\circ}$. Thus by (\ref{eq: lambda})
we have 
\[
\gamma_{1}^{\circ}(D\varphi+\lambda_{k}\nu)\le\gamma_{k}^{\circ}(D\varphi+\lambda_{k}\nu)=1.
\]
Hence by (\ref{eq: bd gama}) applied to $\gamma_{1}^{\circ}$, we
have $|D\varphi+\lambda_{k}\nu|\le C$, for some $C>0$. Therefore
we get $|\lambda_{k}|=|\lambda_{k}\nu|\le C+|D\varphi|$. Thus $\lambda_{k}$
is bounded on $\partial U$ independently of $k$.

Hence we only need to show that the entries of $I-X_{k}$ are bounded
on $\partial U$ independently of $k$. Note that we have $\gamma_{k}(D\gamma_{k}^{\circ}(\mu_{k}))=1$
due to (\ref{eq: g0 (Dg)=00003D1}). Thus $\gamma(D\gamma_{k}^{\circ}(\mu_{k}))\le1$
for every $k$, since $\gamma\le\gamma_{k}$ due to $K\supset K_{k}$.
So $D\gamma_{k}^{\circ}(\mu_{k})$ is bounded independently of $k$.
Therefore it only remains to show that $\langle D\gamma_{k}^{\circ}(\mu_{k}),\nu\rangle$
has a positive lower bound on $\partial U$ independently of $k$.%
{} Note that for every $k$, $\langle D\gamma_{k}^{\circ}(\mu_{k}),\nu\rangle$
is a continuous positive function on the compact set $\partial U$,
due to (\ref{eq: Dg0 . n > 0}). Hence there is $c_{k}>0$ such that
$\langle D\gamma_{k}^{\circ}(\mu_{k}),\nu\rangle\ge c_{k}$. Suppose
to the contrary that $c_{k}$ has a subsequence $c_{k_{j}}\to0$.
Let us denote $k_{j}$ by $j$ for simplicity. Then there is a sequence
$y_{j}\in\partial U$ such that 
\begin{equation}
\langle D\gamma_{j}^{\circ}(\mu_{j}(y_{j})),\nu(y_{j})\rangle\to0.\label{eq: 2 in reg Nonconv dom}
\end{equation}
By passing to another subsequence, we can assume that $y_{j}\to y\in\partial U$,
since $\partial U$ is compact. Now remember that 
\[
\mu_{j}(y_{j})=D\varphi(y_{j})+\lambda_{j}(y_{j})\nu(y_{j}),
\]
where $\lambda_{j}>0$. As we have shown in the last paragraph, $\lambda_{j}$
is bounded on $\partial U$ independently of $j$. Hence by passing
to another subsequence, we can assume that $\lambda_{j}\to\lambda^{*}\ge0$.
Therefore we have 
\[
\mu_{j}(y_{j})\to\mu^{*}:=D\varphi(y)+\lambda^{*}\nu(y).
\]
On the other hand we have $\gamma_{j}^{\circ}(\mu_{j}(y_{j}))=1$.
Hence $\gamma^{\circ}(\mu_{j}(y_{j}))\ge1$, since $\gamma_{j}^{\circ}\le\gamma^{\circ}$
due to $K^{\circ}\subset K_{j}^{\circ}$. Thus we get $\gamma^{\circ}(\mu^{*})\ge1$.
However we cannot have $\gamma^{\circ}(\mu^{*})>1$. Because then
$\mu^{*}$ will have a positive distance from $K^{\circ}$, and therefore
it will have a positive distance from $K_{j}^{\circ}$ for large enough
$j$. But this contradicts the facts that $\mu_{j}(y_{j})\to\mu^{*}$
and $\mu_{j}(y_{j})\in K_{j}^{\circ}$. Thus we must have $\gamma^{\circ}(\mu^{*})=1$,
i.e. $\mu^{*}\in\partial K^{\circ}$.

Now note that $\mathrm{v}_{j}:=D\gamma_{j}^{\circ}(\mu_{j}(y_{j}))\in N(K_{j}^{\circ},\mu_{j}(y_{j}))$.
In addition we have $\gamma_{j}(\mathrm{v}_{j})=1$ due to (\ref{eq: g0 (Dg)=00003D1}).
Hence we have $\mathrm{v}_{j}\in K_{j}\subset K$. Thus by passing
to yet another subsequence, we can assume that $\mathrm{v}_{j}\to\mathrm{v}\in K$.
We also have $\gamma_{1}(\mathrm{v}_{j})\ge1$, since $\gamma_{j}\le\gamma_{1}$
due to $K_{j}\supset K_{1}$. So we get $\gamma_{1}(\mathrm{v})\ge1$.
In particular $\mathrm{v}\ne0$. We claim that $\mathrm{v}\in N(K^{\circ},\mu^{*})$.
To see this, note that we have 
\[
K^{\circ}\subset K_{j}^{\circ}\subset\{z:\langle z-\mu_{j}(y_{j}),\mathrm{v}_{j}\rangle\le0\}.
\]
Hence for every $x\in K^{\circ}$ we have $\langle x-\mu_{j}(y_{j}),\mathrm{v}_{j}\rangle\le0$.
Thus as $j\to\infty$ we get $x-\mu_{j}(y_{j})\to x-\mu^{*}$. So
we have $\langle x-\mu^{*},\mathrm{v}\rangle\le0$. Therefore 
\[
K^{\circ}\subset\{z:\langle z-\mu^{*},\mathrm{v}\rangle\le0\},
\]
as desired. Now by (\ref{eq: 2 in reg Nonconv dom}) we obtain 
\begin{equation}
\langle\mathrm{v},\nu(y)\rangle=\lim\langle\mathrm{v}_{j},\nu(y_{j})\rangle=0.\label{eq: 3 in reg Nonconv dom}
\end{equation}
But if $\gamma^{\circ}(D\varphi(y))<1$ then we must have $\lambda^{*}>0$.
So $D\varphi=\mu^{*}-\lambda^{*}\nu$ belongs to the ray passing through
$\mu^{*}\in\partial K^{\circ}$ in the direction $-\nu$. However,
we know that $D\varphi$ is in the interior of $K^{\circ}$, since
$\gamma^{\circ}(D\varphi)<1$. Thus the ray $t\mapsto\mu^{*}-t\nu$
for $t>0$, passes through the interior of $K^{\circ}$. Therefore
this ray and $K^{\circ}$ must lie on the same side of the supporting
hyperplane $H_{\mu^{*},\mathrm{v}}$. In addition, the ray cannot
lie on the hyperplane, since it intersects the interior of $K^{\circ}$.
Hence we must have $\langle\mathrm{v},\nu\rangle=-\langle\mathrm{v},-\nu\rangle>0$,
which contradicts (\ref{eq: 3 in reg Nonconv dom}). 

Thus we must have $\gamma^{\circ}(D\varphi(y))=1$, i.e. $D\varphi\in\partial K^{\circ}$.
If $\lambda^{*}=0$ then $\mu^{*}=D\varphi$. Hence $\mathrm{v}\in N(K^{\circ},D\varphi)$.%
{} Then (\ref{eq: 3 in reg Nonconv dom}) is in contradiction with our
assumption (\ref{eq: 0 in reg Nonconv dom}), since we showed that
$\mathrm{v}\ne0$. So suppose $\lambda^{*}>0$. Then the ray $t\mapsto\mu^{*}-t\nu$
for $t>0$, passes through the two points $D\varphi,\mu^{*}\in\partial K^{\circ}$.
Furthermore, (\ref{eq: 3 in reg Nonconv dom}) implies that the ray
lies on the supporting hyperplane $H_{\mu^{*},\mathrm{v}}$. Therefore
$D\varphi(y)\in H_{\mu^{*},\mathrm{v}}$. Hence $H_{\mu^{*},\mathrm{v}}$
is also a supporting hyperplane of $K^{\circ}$ at $D\varphi(y)$.
So $\mathrm{v}\in N(K^{\circ},D\varphi)$, and again we arrive at
a contradiction with (\ref{eq: 0 in reg Nonconv dom}). 

Thus $\langle D\gamma_{k}^{\circ}(\mu_{k}),\nu\rangle$ must have
a positive lower bound on $\partial U$ independently of $k$, as
desired. Therefore $D^{2}\rho_{k}$ is bounded on $\partial U$ independently
of $k$; and consequently we have an upper bound for $D_{i}(D_{i}F(Du_{k}))$
on $P_{k}^{+}$, which is independent of $k$. Similarly, we can show
that $D_{i}(D_{i}F(Du_{k}))$ is bounded on $P_{k}^{-}$, independently
of $k$. Hence we obtain the desired bound (\ref{eq: 1 in reg Nonconv dom}).\medskip{}

PART II:\medskip{}

Now let $g_{k}:=D_{i}(D_{i}F(Du_{k}))$. Then $u_{k}$ is a weak
solution to the quasilinear elliptic equation 
\[
D_{i}(D_{i}F(Du_{k}))=g_{k},\qquad\qquad u_{k}|_{\partial U}=\varphi.
\]
Thus as shown in \citep{MR749677} we have $u_{k}\in C^{1,\alpha_{0}}(\overline{U})$
for some $\alpha_{0}>0$. In addition, $\|u_{k}\|_{C^{1,\alpha_{0}}(\overline{U})}$
is bounded independently of $k$ as shown for example in \citep{fan2007global}.
Then we get 
\[
a_{ij,k}D_{ij}^{2}u_{k}=g_{k},\qquad\qquad u_{k}|_{\partial U}=\varphi,
\]
where $a_{ij,k}:=D_{ij}^{2}F(Du_{k})$ is continuous on $\overline{U}$.
Hence by Theorems 9.13 and 9.15 of \citep{MR1814364} we have $u_{k}\in W^{2,p}(U)$
for every $p<\infty$, and there exists $C_{p}>0$ independent of
$k$ such that 
\begin{equation}
\|u_{k}\|_{W^{2,p}(U)}\le C_{p}\big(\|g_{k}\|_{L^{p}(U)}+\|u_{k}\|_{L^{p}(U)}\big).\label{eq: u k in W2,p}
\end{equation}
Here we used the fact that $a_{ij,k}$'s have a uniform modulus of
continuity independently of $k$, due to the uniform boundedness of
the $C^{\alpha_{0}}$ norm of $Du_{k}$. In fact we can replace the
$\|u_{k}\|_{L^{p}}$ on the right hand side of the above inequality
with $\|u_{k}\|_{L^{1}}$; see Theorem 2.2 of \citep{andersson2013optimal}
and the references therein. Therefore $u_{k}$ is a bounded sequence
in $W^{2,p}(U)$ due to (\ref{eq: 1 in reg Nonconv dom}).

Hence there is a subsequence of $u_{k}$, which we still denote by
$u_{k}$, that is weakly convergent in $W^{2,p}(U)$, and strongly
convergent in $C^{1}(\overline{U})$. Also remember that $u_{k}$
uniformly converges to a continuous function $\tilde{u}\in C^{0}(\overline{U})$
that satisfies $\tilde{u}|_{\partial U}=\varphi$. Thus all the above
limits must be equal to $\tilde{u}$. As a result, $\tilde{u}$ belongs
to $W^{2,p}(U)$ for every $p<\infty$. Furthermore we have $D\tilde{u}\in K^{\circ}$;
because $Du_{k}\in K_{k}^{\circ}$, and thus $D\tilde{u}\in K_{k}^{\circ}$
for every $k$. So we have $\tilde{u}\in W_{K^{\circ},\varphi}(U)$,
since $\tilde{u}|_{\partial U}=\varphi$. Now we will show that $\tilde{u}$
is the minimizer of $J$ over $W_{K^{\circ},\varphi}(U)$. Let $v\in W_{K^{\circ},\varphi}(U)$.
Then $v\in W_{K_{k}^{\circ},\varphi}(U)$ for every $k$. Thus we
get $J[u_{k}]\le J[v]$. But by the Dominated Convergence Theorem
we have $J[u_{k}]\to J[\tilde{u}]$. Hence $\tilde{u}$ is the minimizer,
and we must have $\tilde{u}=u$. Therefore $u\in W^{2,p}(U)$ for
every $p<\infty$.

Finally let us show that $u$ belongs to $W^{2,\infty}(U)$. First
we show that $D^{2}u_{k}$ is bounded on $P_{k}$ independently of
$k$. To see this, consider $P_{k}^{+}$; the other case is similar.
We have shown in the Part I of the proof that $D^{2}\rho_{k}$ is
bounded on $\partial U$,%
{} independently of $k$. Hence by Lemma \ref{lem: D2 rho decreas},
when $y$ is the $\rho_{k}$-closest point on $\partial U$ to $x\in P_{k}^{+}$,
and $\xi\in\R^{n}$, we have 
\begin{equation}
D_{\xi\xi}^{2}\rho_{k}(x)\le D_{\xi\xi}^{2}\rho_{k}(y)\le\tilde{C},\label{eq: D2 rho k is bdd}
\end{equation}
for some $\tilde{C}$ independent of $k$. In addition, for a.e. $x\in P_{k}^{+}$
we have 
\begin{align*}
D_{i}(D_{i}F(Du_{k}(x))) & =D_{i}(D_{i}F(D\rho_{k}(x)))=D_{ij}^{2}F(D\rho_{k}(x))D_{ij}^{2}\rho_{k}(x)\\
 & =\mathrm{tr}[D^{2}F(\mu_{k})D^{2}\rho_{k}(x)]=\underset{i\le n}{\sum}D_{\xi_{i}\xi_{i}}^{2}F(\mu_{k})D_{\xi_{i}\xi_{i}}^{2}\rho_{k}(x),
\end{align*}
where $\xi_{1},\cdots,\xi_{n}$ is an orthonormal basis of eigenvectors
of $D^{2}\rho_{k}(x)$. Hence by (\ref{eq: diff ineq}) we get 
\[
\underset{i\le n}{\sum}D_{\xi_{i}\xi_{i}}^{2}F(\mu_{k})D_{\xi_{i}\xi_{i}}^{2}\rho_{k}(x)=D_{i}(D_{i}F(Du_{k}(x)))\ge g'(u_{k}(x))\ge-C,
\]
for some $C$ independent of $k$. Note that here we are using (\ref{eq: Bnds}),
and the fact that $u_{k}$ is bounded independently of $k$. Therefore
by (\ref{eq: Bnds}),(\ref{eq: D2 rho k is bdd}) we obtain 
\[
D_{\xi_{j}\xi_{j}}^{2}F(\mu_{k})D_{\xi_{j}\xi_{j}}^{2}\rho_{k}(x)\ge-C-\underset{i\ne j}{\sum}D_{\xi_{i}\xi_{i}}^{2}F(\mu_{k})D_{\xi_{i}\xi_{i}}^{2}\rho_{k}(x)\ge-C-(n-1)c_{9}\tilde{C}.
\]
Hence again by (\ref{eq: Bnds}) we obtain 
\[
D_{\xi_{j}\xi_{j}}^{2}\rho_{k}(x)\ge-\frac{C}{c_{8}}-(n-1)\frac{c_{9}}{c_{8}}\tilde{C}.
\]
Thus by (\ref{eq: D2 rho k is bdd}) and the above inequality, $|D_{\xi_{j}\xi_{j}}^{2}\rho_{k}(x)|$
is bounded independently of $k$, for a.e. $x\in P_{k}^{+}$. Now
note that the numbers $D_{\xi_{j}\xi_{j}}^{2}\rho_{k}(x)$ are the
eigenvalues of $D^{2}\rho_{k}(x)$. So $D^{2}\rho_{k}$ is bounded
on $P_{k}^{+}$ independently of $k$. But on $P_{k}^{+}$ we have
$D^{2}u_{k}=D^{2}\rho_{k}$ a.e.. Therefore $D^{2}u_{k}$ is bounded
on $P_{k}^{+}$ independently of $k$. The case of $P_{k}^{-}$ can
be treated similarly.

Now let $x_{0}\in U$, and suppose that $B_{r}(x_{0})\subset U$.
Set $v_{k}(y):=u_{k}(x_{0}+ry)$ for $y\in B_{1}(0)$. Then by (\ref{eq: diff ineq}),
and the arguments of the above paragraph, we have 
\[
\begin{cases}
D_{ij}^{2}F(\frac{1}{r}Dv_{k})D_{ij}^{2}v_{k}=r^{2}g'(v_{k}) & \textrm{ a.e. in }B_{1}(0)\cap\Omega_{k},\\
|D^{2}v_{k}|\le C & \textrm{ a.e. in }B_{1}(0)-\Omega_{k},
\end{cases}
\]
for some $C$ independent of $k$. Here $\Omega_{k}:=\{y\in B_{1}(0):u_{k}(x_{0}+ry)\in E_{k}\}$.
(Note that on $B_{1}(0)-\Omega_{k}$ we have $D^{2}v_{k}=r^{2}D^{2}u_{k}$,
and $u_{k}\in P_{k}$; so $D^{2}v_{k}$ is bounded there.) Next recall
that $\|u_{k}\|_{W^{2,n}(B_{r}(x_{0}))}$, $\|g'(u_{k})\|_{L^{\infty}(B_{r}(x_{0}))}$
are bounded independently of $k$, due to (\ref{eq: u k in W2,p}),(\ref{eq: 1 in reg Nonconv dom}),
and the fact that $u_{k}$ is bounded independently of $k$. Therefore
$\|v_{k}\|_{W^{2,n}(B_{1}(0))}$ and $\|g'(v_{k})\|_{L^{\infty}(B_{1}(0))}$
are bounded independently of $k$ too. Also note that the Holder norms
of $D_{ij}^{2}F(\frac{1}{r}Dv_{k}),r^{2}g'(v_{k})$ are bounded independently
of $k$, since for every $\tilde{\alpha}<1$, $\|u_{k}\|_{C^{1,\tilde{\alpha}}(\overline{U})}$
is bounded independently of $k$. Thus we can apply the result of
\citep{Indrei-Minne} to deduce that 
\[
|D^{2}v_{k}|\le\bar{C}\qquad\textrm{ a.e. in }B_{\frac{1}{2}}(0),
\]
for some $\bar{C}$ independent of $k$. Therefore 
\[
|D^{2}u_{k}|\le\tilde{C}\qquad\textrm{ a.e. in }B_{\frac{r}{2}}(x_{0}),
\]
for some $\tilde{C}$ independent of $k$. Hence $u_{k}$ is a bounded
sequence in $W^{2,\infty}(B_{\frac{r}{2}}(x_{0}))$. Therefore a
subsequence of them converges weakly star in $W^{2,\infty}(B_{\frac{r}{2}}(x_{0}))$.
But the limit must be $u$; so we get $u\in W^{2,\infty}(B_{\frac{r}{2}}(x_{0}))$,
as desired.

Next let $x_{0}\in\partial U$. Let $\Phi$ be a $C^{2,\alpha}$ change
of coordinates on a neighborhood of $x_{0}$, that flattens $\partial U$
around $x_{0}$. More specifically, we assume that $\Phi:x\mapsto y$
maps a neighborhood of $x_{0}$ onto a neighborhood of $0$ that contains
$\overline{B}_{1}(0)$, and the $\Phi$-image of $U,\partial U$
lie respectively in the half-space $\{y_{n}>0\}$ and on the plane
$\{y_{n}=0\}$. Let $\Psi$ be the inverse of $\Phi$. Then we have
$y=\Phi(x)$ and $x=\Psi(y)$. Let $B_{1}^{+}:=B_{1}(0)\cap\{y_{n}>0\}$
and $B_{1}':=B_{1}(0)\cap\{y_{n}=0\}$. Now set 
\[
\hat{u}_{k}(y):=u_{k}(\Psi(y))-\varphi(\Psi(y))=u_{k}(x)-\varphi(x).
\]
It is obvious that $\hat{u}_{k}=0$ on $B_{1}'$. We also have $\hat{u}_{k}\in W^{2,n}(B_{1}^{+})\cap C^{1}(\overline{B}_{1}^{+})$
(see {[}\citealp{MR1814364}, Section 7.3{]}). In addition we have
\begin{align}
D_{\hat{i}}\hat{u}_{k}(y) & =D_{i}u_{k}(x)D_{\hat{i}}\Psi^{i}(y)-D_{i}\varphi(x)D_{\hat{i}}\Psi^{i}(y),\nonumber \\
D_{\hat{i}\hat{j}}^{2}\hat{u}_{k}(y) & =D_{ij}^{2}u_{k}(x)D_{\hat{i}}\Psi^{i}(y)D_{\hat{j}}\Psi^{j}(y)+D_{i}u_{k}(x)D_{\hat{i}\hat{j}}^{2}\Psi^{i}(y)\label{eq: D u hat}\\
 & \qquad\qquad-D_{ij}^{2}\varphi(x)D_{\hat{i}}\Psi^{i}(y)D_{\hat{j}}\Psi^{j}(y)-D_{i}\varphi(x)D_{\hat{i}\hat{j}}^{2}\Psi^{i}(y).\nonumber 
\end{align}
Therefore we get 
\[
\|\hat{u}_{k}\|_{W^{2,n}(B_{1}^{+})}\le C\big(\|u_{k}\|_{W^{2,n}(U)}+\|\varphi\|_{W^{2,\infty}(U)}\big),
\]
for some $C$ independent of $k$. Hence $\|\hat{u}_{k}\|_{W^{2,n}(B_{1}^{+})}$
is bounded independently of $k$, due to (\ref{eq: u k in W2,p}).

Now note that by (\ref{eq: diff ineq}) we have $D_{ij}^{2}F(Du_{k})D_{ij}^{2}u_{k}=D_{i}(D_{i}F(Du_{k}))=g'(u_{k})$
in $E_{k}$. Let $a_{ij,k}:=D_{ij}^{2}F(Du_{k})$, and 
\[
\hat{a}_{\hat{i}\hat{j},k}(y):=a_{lm,k}(x)D_{l}\Phi^{\hat{i}}(x)D_{m}\Phi^{\hat{j}}(x),
\]
where $x=\Psi(y)$. Then we have (Note that we sum over all repeated
indices except for $k$.)
\begin{align*}
\hat{a}_{\hat{i}\hat{j},k}D_{\hat{i}\hat{j}}^{2}\hat{u}_{k}(y) & =a_{lm,k}D_{l}\Phi^{\hat{i}}D_{m}\Phi^{\hat{j}}\big(D_{ij}^{2}u_{k}D_{\hat{i}}\Psi^{i}D_{\hat{j}}\Psi^{j}+D_{i}u_{k}D_{\hat{i}\hat{j}}^{2}\Psi^{i}\\
 & \qquad\quad\qquad\qquad\qquad-D_{ij}^{2}\varphi D_{\hat{i}}\Psi^{i}D_{\hat{j}}\Psi^{j}-D_{i}\varphi D_{\hat{i}\hat{j}}^{2}\Psi^{i}\big)\\
 & =a_{ij,k}D_{ij}^{2}u_{k}-a_{ij,k}D_{ij}^{2}\varphi+a_{lm,k}D_{l}\Phi^{\hat{i}}D_{m}\Phi^{\hat{j}}\big(D_{i}u_{k}D_{\hat{i}\hat{j}}^{2}\Psi^{i}-D_{i}\varphi D_{\hat{i}\hat{j}}^{2}\Psi^{i}\big)\\
 & =g'(u_{k})-a_{ij,k}D_{ij}^{2}\varphi+a_{lm,k}D_{l}\Phi^{\hat{i}}D_{m}\Phi^{\hat{j}}\big(D_{i}u_{k}D_{\hat{i}\hat{j}}^{2}\Psi^{i}-D_{i}\varphi D_{\hat{i}\hat{j}}^{2}\Psi^{i}\big)=:\hat{f}_{k}(y),
\end{align*}
for every $y$ in $\Omega_{k}:=\{y\in B_{1}^{+}:\Psi(y)\in E_{k}\}$.
Note that in the 2nd equality we used the fact that $D\Psi D\Phi=I$.
Also note that $\hat{f}_{k}\in C^{\alpha_{0}}(\overline{B}_{1}^{+})$
for some $\alpha_{0}>0$, and $\|\hat{f}_{k}\|_{C^{\alpha_{0}}(\overline{B}_{1}^{+})}$
is bounded independently of $k$; since $\|u_{k}\|_{C^{1,\tilde{\alpha}}(\overline{U})}$
is bounded independently of $k$, for every $\tilde{\alpha}<1$. Similarly,
$\|\hat{a}_{\hat{i}\hat{j},k}\|_{C^{\alpha_{0}}(\overline{B}_{1}^{+})}$
is bounded independently of $k$. 

On the other hand, $D^{2}\hat{u}_{k}$ is bounded on $B_{1}^{+}-\Omega_{k}:=\{y\in B_{1}^{+}:\Psi(y)\in P_{k}\}$
independently of $k$ due to (\ref{eq: D u hat}); because $D^{2}u_{k}$
is bounded on $P_{k}$ independently of $k$, and $Du_{k}$ is bounded
independently of $k$. Therefore we have 
\[
\begin{cases}
\hat{a}_{\hat{i}\hat{j},k}D_{\hat{i}\hat{j}}^{2}\hat{u}_{k}=\hat{f}_{k} & \textrm{ a.e. in }B_{1}^{+}\cap\Omega_{k},\\
|D^{2}\hat{u}_{k}|\le C & \textrm{ a.e. in }B_{1}^{+}-\Omega_{k},\\
\hat{u}_{k}=0 & \textrm{ on }B_{1}',
\end{cases}
\]
for some $C$ independent of $k$. Hence by the result of \citep{indrei2016nontransversal}
we get 
\[
|D^{2}\hat{u}_{k}|\le\bar{C}\qquad\textrm{ a.e. in }B_{\frac{1}{2}}(0)\cap\{y_{n}>0\},
\]
for some $\bar{C}$ independent of $k$. Thus 
\[
|D^{2}u_{k}|\le\tilde{C}\qquad\textrm{ a.e. in }B_{r}(x_{0})\cap U,
\]
for some $r>0$ and some $\tilde{C}$ independent of $k$. Because
we have 
\[
u_{k}(x)=\hat{u}_{k}(y)+\varphi(x)=\hat{u}_{k}(\Psi(x))+\varphi(x);
\]
so we can compute $D^{2}u_{k}$ in terms of $D^{2}\hat{u}_{k}$, similarly
to (\ref{eq: D u hat}). (Also note that $D\hat{u}_{k}$ is bounded
independently of $k$ due to (\ref{eq: D u hat}).) Hence $u_{k}$
is a bounded sequence in $W^{2,\infty}(B_{r}(x_{0})\cap U)$. Therefore
a subsequence of them converges weakly star in $W^{2,\infty}(B_{r}(x_{0})\cap U)$.
But the limit must be $u$; so we get $u\in W^{2,\infty}(B_{r}(x_{0})\cap U)$.
Finally note that we can cover $\partial U$ with finitely many open
balls of the form $B_{r}(x_{0})$ for $x_{0}\in\partial U$, over
which $u$ is $W^{2,\infty}$. Also, there is an open subset of $U$
whose union with these balls cover $U$, and over it $u$ is $W^{2,\infty}$
too. Thus we can conclude that $u\in W^{2,\infty}(U)$, as desired.
\end{proof}
Next we present the proof of Theorem \ref{thm: Reg Conv dom}. Note
that as we mentioned before the proof of Theorem \ref{thm: Reg NonConv dom},
the assumptions of Theorem \ref{thm: Reg Conv dom} also hold when
we replace $K,\varphi,K^{\circ}$ by $-K,-\varphi$ and $(-K)^{\circ}=-K^{\circ}$.
So as a result, $\rho,\bar{\rho}$ will have the same properties.%

\begin{proof}[\textbf{Proof of Theorem \ref{thm: Reg Conv dom}}]
 The idea of the proof is to approximate both $K^{\circ}$ and $U$
with smooth convex sets. As we explained in the proof of last theorem,
there is a sequence $K_{k}^{\circ}$ of compact convex sets, that
have $C^{2}$ boundaries with positive curvature, and 
\begin{eqnarray*}
K_{k+1}^{\circ}\subset\mathrm{int}(K_{k}^{\circ}), & \qquad & K^{\circ}={\textstyle \bigcap}K_{k}^{\circ}.
\end{eqnarray*}
Then it follows that $K_{k}$'s are strictly convex compact sets
with $0$ in their interior, which have $C^{2}$ boundaries with positive
curvature. Furthermore we have $K=(K^{\circ})^{\circ}\supset K_{k+1}\supset K_{k}$.
Similarly, there is a sequence $U_{k}$ of bounded convex open sets
with $C^{2}$ boundaries such that 
\begin{eqnarray*}
\overline{U}_{k+1}\subset U_{k}, & \qquad & \overline{U}={\textstyle \bigcap}U_{k}.
\end{eqnarray*}

To simplify the notation we use $\gamma_{k},\gamma_{k}^{\circ},\rho_{k},\bar{\rho}_{k}$
instead of $\gamma_{K_{k}},\gamma_{K_{k}^{\circ}},\rho_{K_{k},\varphi}(\cdot;U_{k}),\bar{\rho}_{K_{k},\varphi}(\cdot;U_{k})$,
respectively. Note that $K_{k},U_{k},\varphi$ satisfy the Assumption
\ref{assu: 2}. In particular we have $\gamma_{k}^{\circ}(D\varphi)<1$,
since $D\varphi\in K^{\circ}\subset\mathrm{int}(K_{k}^{\circ})$.
Let $u_{k}$ be the minimizer of $J[\cdot;U_{k}]$ over $W_{K_{k}^{\circ},\varphi}(U_{k})$.
Then by Theorem \ref{thm: Reg u} we have $u_{k}\in W_{\textrm{loc}}^{2,\infty}(U_{k})$.
We also have 
\begin{eqnarray*}
-\bar{\rho}_{k}\le u_{k}\le\rho_{k}, & \qquad & Du_{k}\in K_{k}^{\circ}\subset K_{1}^{\circ}\quad\textrm{ a.e.}.
\end{eqnarray*}
Now note that for every $x$ we have $\rho_{k}(x)=\gamma_{k}(x-y_{k})+\varphi(y_{k})$
for some $y_{k}\in\partial U_{k}$. Also note that $\gamma_{k}\le\gamma_{1}$,
since $K_{k}\supset K_{1}$. Hence by (\ref{eq: phi Lip}),(\ref{eq: bd gama}),
for every $y\in\partial U$ we have 
\begin{align*}
\rho_{k}(x) & \le\gamma_{1}(x-y_{k})+\varphi(y_{k})\\
 & \le\gamma_{1}(x-y)+\gamma_{1}(y-y_{k})+\varphi(y)+\varphi(y_{k})-\varphi(y)\\
 & \le\gamma_{1}(x-y)+\varphi(y)+\gamma_{1}(y-y_{k})+\gamma_{1}(y_{k}-y)\\
 & \le\gamma_{1}(x-y)+\varphi(y)+2C|y-y_{k}|,
\end{align*}
for some $C>0$. Therefore we get $\rho_{k}(x)\le\rho_{K_{1},\varphi}(x;U)+2C\mathrm{dist}(\partial U,\partial U_{k})$.
We have a similar bound for $\bar{\rho}_{k}$. So we obtain 
\begin{equation}
-\bar{\rho}_{K_{1},\varphi}(\cdot;U)-2C\mathrm{dist}(\partial U,\partial U_{k})\le u_{k}\le\rho_{K_{1},\varphi}(\cdot;U)+2C\mathrm{dist}(\partial U,\partial U_{k}).\label{eq: 1 in reg Conv dom}
\end{equation}
Thus $u_{k},Du_{k}$ are bounded independently of $k$. Therefore
$u_{k}$ is a bounded sequence in $W^{1,\infty}(U)=C^{0,1}(\overline{U})$.
Note that here we are using the fact that $\partial U$ is Lipschitz,
since locally it is the graph of a convex function. Hence by the
Arzela-Ascoli Theorem a subsequence of $u_{k}$, which we still denote
by $u_{k}$, uniformly converges to a continuous function $\tilde{u}\in C^{0}(\overline{U})$.
In addition we have $\tilde{u}|_{\partial U}=\varphi$, because in
the limit, the bounds (\ref{eq: 1 in reg Conv dom}) become $-\bar{\rho}_{K_{1},\varphi}(\cdot;U)\le\tilde{u}\le\rho_{K_{1},\varphi}(\cdot;U)$.
Thus we get the desired, since $-\bar{\rho}_{K_{1},\varphi}(\cdot;U)|_{\partial U}=\rho_{K_{1},\varphi}(\cdot;U)|_{\partial U}=\varphi$,
as shown in the proof of Proposition \ref{prop: equiv}.

Now we argue as we did in the proof of Theorem \ref{thm: Reg NonConv dom}.
Let $R_{k}$ be the $\rho_{k}$-ridge, and let $E_{k},P_{k}^{\pm}$
be the elastic and plastic regions of $u_{k}$. We want to show that
\begin{equation}
\|D_{i}(D_{i}F(Du_{k}))\|_{L^{\infty}(U_{k})}\le C,\label{eq: 2 in reg Conv dom}
\end{equation}
for some $C$ independent of $k$. As we have shown before by using
(\ref{eq: diff ineq}), it is easy to see that $D_{i}(D_{i}F(Du_{k}))$
is bounded on $E_{k}$, and it is bounded above on $P_{k}^{-}$,
and bounded below on $P_{k}^{+}$. The hard part is to obtain upper
bound for $D_{i}(D_{i}F(Du_{k}))$ on $P_{k}^{+}$, and lower bound
for it on $P_{k}^{-}$, which are independent of $k$. We will obtain
the upper bound on $P_{k}^{+}$; the other case is similar.

Since $P_{k}^{+}$ does not intersect $R_{k}$ due to Theorem \ref{thm: ridge is elastic},
$\rho_{k}$ is at least $C^{2}$ on $P_{k}^{+}$. Let $\nu_{k}$ be
the inward unit normal to $\partial U_{k}$. Also let $\lambda_{k},\mu_{k}$
be defined by (\ref{eq: lambda}),(\ref{eq: mu}) respectively, when
we use $\gamma_{k}^{\circ}$ instead of $\gamma^{\circ}$, and $\partial U_{k},\nu_{k}$
instead of $\partial U,\nu$. Hence as before, by Lemma \ref{lem: D2 rho decreas},
for a.e. $x\in P_{k}^{+}$ we have 
\begin{align*}
D_{i}(D_{i}F(Du_{k}(x))) & =D_{i}(D_{i}F(D\rho_{k}(x)))=D_{ij}^{2}F(D\rho_{k}(x))D_{ij}^{2}\rho_{k}(x)\\
 & =\mathrm{tr}[D^{2}F(\mu_{k})D^{2}\rho_{k}(x)]\le\mathrm{tr}[D^{2}F(\mu_{k})D^{2}\rho_{k}(y)],
\end{align*}
where $y$ is the $\rho_{k}$-closest point on $\partial U_{k}$ to
$x$. But by (\ref{eq: D2 rho (y)}), for every $y\in\partial U_{k}$
we have 
\[
D^{2}\rho_{k}(y)=(I-X_{k}^{T})\big(D^{2}\varphi(y)+\lambda_{k}(y)D^{2}d_{k}(y)\big)(I-X_{k}),
\]
where $X_{k}:=\frac{1}{\langle D\gamma_{k}^{\circ}(\mu_{k}),\nu_{k}\rangle}D\gamma_{k}^{\circ}(\mu_{k})\otimes\nu_{k}$,
and $d_{k}$ is the Euclidean distance to $\partial U_{k}$. However,
we know that the eigenvalues of $D^{2}d_{k}(y)$ are minus the principal
curvatures of $\partial U_{k}$ at $y$, and $0$; as shown in {[}\citealp{MR1814364},
Section 14.6{]}. So $D^{2}d_{k}(y)$ is negative semidefinite, since
$U_{k}$ is convex. Thus $\lambda_{k}(I-X_{k}^{T})D^{2}d_{k}(I-X_{k})$
is also negative semidefinite, since $\lambda_{k}>0$. On the other
hand by (\ref{eq: Bnds}), we know that $D^{2}F$ is positive definite.
Let $\sqrt{D^{2}F}$ be the unique symmetric positive definite matrix
whose square is $D^{2}F$. Then we get 
\begin{align*}
D_{i}(D_{i}F(Du_{k}(x))) & -\mathrm{tr}[D^{2}F(I-X_{k}^{T})D^{2}\varphi(I-X_{k})]\\
 & \le\mathrm{tr}[D^{2}F\lambda_{k}(I-X_{k}^{T})D^{2}d_{k}(I-X_{k})]\\
 & =\lambda_{k}\mathrm{tr}[\sqrt{D^{2}F}(I-X_{k}^{T})D^{2}d_{k}(I-X_{k})\sqrt{D^{2}F}]\le0,
\end{align*}
because $\sqrt{D^{2}F}(I-X_{k}^{T})D^{2}d_{k}(I-X_{k})\sqrt{D^{2}F}$
is negative semidefinite too. 

Hence we only need to find a bound for $\mathrm{tr}[D^{2}F(I-X_{k}^{T})D^{2}\varphi(I-X_{k})]$.
When $\varphi$ is linear we have $D^{2}\varphi=0$; so we have the
desired bound. Thus let us assume that $\varphi$ is not necessarily
linear. By (\ref{eq: Bnds}) we know that $D^{2}F$ is bounded. It
is also obvious that $D^{2}\varphi$ is bounded on $\partial U_{k}\subset\overline{U}_{1}$,
independently of $k$. Hence we only need to show that the entries
of $I-X_{k}$ are bounded on $\partial U_{k}$ independently of $k$.
Note that we have $\gamma_{k}(D\gamma_{k}^{\circ}(\mu_{k}))=1$ due
to (\ref{eq: g0 (Dg)=00003D1}). Thus $\gamma(D\gamma_{k}^{\circ}(\mu_{k}))\le1$
for every $k$, since $\gamma\le\gamma_{k}$ due to $K\supset K_{k}$.
So $D\gamma_{k}^{\circ}(\mu_{k})$ is bounded independently of $k$.
Also, $\nu_{k}$ is bounded independently of $k$, since $|\nu_{k}|=1$.
Therefore it only remains to show that $\langle D\gamma_{k}^{\circ}(\mu_{k}),\nu_{k}\rangle$
has a positive lower bound on $\partial U_{k}$ independently of $k$.%
{} 

Note that for every $k$, $\langle D\gamma_{k}^{\circ}(\mu_{k}),\nu_{k}\rangle$
is a continuous positive function on the compact set $\partial U_{k}$,
due to (\ref{eq: Dg0 . n > 0}). Hence there is $c_{k}>0$ such that
$\langle D\gamma_{k}^{\circ}(\mu_{k}),\nu_{k}\rangle\ge c_{k}$. Suppose
to the contrary that $c_{k}$ has a subsequence $c_{k_{j}}\to0$.
Let us denote $k_{j}$ by $j$ for simplicity. Then there is a sequence
$y_{j}\in\partial U_{j}\subset\overline{U}_{1}$ such that 
\begin{equation}
\langle D\gamma_{j}^{\circ}(\mu_{j}(y_{j})),\nu_{j}(y_{j})\rangle\to0.\label{eq: 22 in reg Conv dom}
\end{equation}
By passing to another subsequence, we can assume that $y_{j}\to y$,
since $\overline{U}_{1}$ is compact. We claim that $y\in\partial U$.
First note that $y\notin U$, since otherwise $y_{j}$ must be in
$U$ for large enough $j$, which contradicts the fact that $y_{j}\in\partial U_{j}$.
On the other hand, we have $y\in\overline{U}_{k}$ for every $k$;
because $y_{j}\in\overline{U}_{k}$ for large enough $j$. So we must
have $y\in\bigcap\overline{U}_{k}=\overline{U}$. 

Now remember that 
\[
\mu_{j}(y_{j})=D\varphi(y_{j})+\lambda_{j}(y_{j})\nu_{j}(y_{j}),
\]
where $\lambda_{j}>0$. In addition, note that $\gamma_{j}^{\circ}\ge\gamma_{1}^{\circ}$,
since $K_{j}^{\circ}\subset K_{1}^{\circ}$. Thus by (\ref{eq: lambda})
we have 
\[
\gamma_{1}^{\circ}(D\varphi+\lambda_{j}\nu_{j})\le\gamma_{j}^{\circ}(D\varphi+\lambda_{j}\nu_{j})=1.
\]
Hence by (\ref{eq: bd gama}) applied to $\gamma_{1}^{\circ}$, we
have $|D\varphi+\lambda_{j}\nu_{j}|\le C$, for some $C>0$. Therefore
we get $|\lambda_{j}|=|\lambda_{j}\nu_{j}|\le C+|D\varphi|$. Thus
$\lambda_{j}$ is bounded on $\partial U_{j}$ independently of $j$.
Hence by passing to another subsequence, we can assume that $\lambda_{j}\to\lambda^{*}\ge0$.
Also, note that $-\nu_{j}(y_{j})\in N(U_{j},y_{j})$. In addition
we have $|\nu_{j}(y_{j})|=1$. Thus by passing to yet another subsequence,
we can assume that $-\nu_{j}(y_{j})\to\mathrm{w}$, with $|\mathrm{w}|=1$.
Therefore similarly to the Part I of the previous proof, we can show
that $\mathrm{w}\in N(U,y)$. Thus we have 
\[
\mu_{j}(y_{j})\to\mu^{*}:=D\varphi(y)-\lambda^{*}\mathrm{w}.
\]
It also follows similarly that $\gamma^{\circ}(\mu^{*})=1$, i.e.
$\mu^{*}\in\partial K^{\circ}$. In addition, we can similarly conclude
that the sequence $\mathrm{v}_{j}:=D\gamma_{j}^{\circ}(\mu_{j}(y_{j}))\in N(K_{j}^{\circ},\mu_{j}(y_{j}))$
converges to a nonzero vector $\mathrm{v}\in N(K^{\circ},\mu^{*})$,
after we pass to one further subsequence.

Now by (\ref{eq: 22 in reg Conv dom}) we obtain 
\begin{equation}
\langle\mathrm{v},\mathrm{w}\rangle=\lim\langle\mathrm{v}_{j},-\nu_{j}(y_{j})\rangle=0.\label{eq: 23 in reg Conv dom}
\end{equation}
But if $\gamma^{\circ}(D\varphi(y))<1$ then we must have $\lambda^{*}>0$.
So $D\varphi=\mu^{*}+\lambda^{*}\mathrm{w}$ belongs to the ray passing
through $\mu^{*}\in\partial K^{\circ}$ in the direction $\mathrm{w}$.
However, we know that $D\varphi$ is in the interior of $K^{\circ}$,
since $\gamma^{\circ}(D\varphi)<1$. Thus the ray $t\mapsto\mu^{*}+t\mathrm{w}$
for $t>0$, passes through the interior of $K^{\circ}$. Therefore
this ray and $K^{\circ}$ must lie on the same side of the supporting
hyperplane $H_{\mu^{*},\mathrm{v}}$. In addition, the ray cannot
lie on the hyperplane, since it intersects the interior of $K^{\circ}$.
Hence we must have $\langle\mathrm{v},\mathrm{w}\rangle<0$, which
contradicts (\ref{eq: 23 in reg Conv dom}). Thus we must have $\gamma^{\circ}(D\varphi(y))=1$,
i.e. $D\varphi\in\partial K^{\circ}$. If $\lambda^{*}=0$ then $\mu^{*}=D\varphi$.
Hence $\mathrm{v}\in N(K^{\circ},D\varphi)$.%
{} Then (\ref{eq: 23 in reg Conv dom}) is in contradiction with our
assumption (\ref{eq: 0 in reg Conv dom}), since $\mathrm{v}\ne0$.
So suppose $\lambda^{*}>0$. Then the ray $t\mapsto\mu^{*}+t\mathrm{w}$
for $t>0$, passes through the two points $D\varphi,\mu^{*}\in\partial K^{\circ}$.
Furthermore, (\ref{eq: 23 in reg Conv dom}) implies that the ray
lies on the supporting hyperplane $H_{\mu^{*},\mathrm{v}}$. Therefore
$D\varphi(y)\in H_{\mu^{*},\mathrm{v}}$. Hence $H_{\mu^{*},\mathrm{v}}$
is also a supporting hyperplane of $K^{\circ}$ at $D\varphi(y)$.
So $\mathrm{v}\in N(K^{\circ},D\varphi)$, and again we arrive at
a contradiction with (\ref{eq: 0 in reg Conv dom}). 

Thus $\langle D\gamma_{k}^{\circ}(\mu_{k}),\nu_{k}\rangle$ must have
a positive lower bound on $\partial U_{k}$ independently of $k$,
as desired. Therefore $D^{2}\rho_{k}$ is bounded on $\partial U_{k}$
independently of $k$; and consequently we have an upper bound for
$D_{i}(D_{i}F(Du_{k}))$ on $P_{k}^{+}$, which is independent of
$k$. Similarly, we can show that $D_{i}(D_{i}F(Du_{k}))$ is bounded
on $P_{k}^{-}$, independently of $k$. Hence we obtain the desired
bound (\ref{eq: 2 in reg Conv dom}).

Now let $V_{l}\subset\overline{V}_{l}\subset U$ be an expanding sequence
of open sets with $C^{2}$ boundaries, such that $U=\bigcup V_{l}$.
Consider the sequence $u_{k}|_{V_{l+2}}$. Similarly to the proof
of Theorem \ref{thm: Reg u}, we can show that for every $p<\infty$
there is $C_{p,l}>0$, which is independent of $k$, such that 
\[
\|u_{k}\|_{W^{2,p}(V_{l})}\le C_{p,l}.
\]
Consequently, as $\partial V_{l}$ is $C^{2}$, for every $\tilde{\alpha}<1$,
$\|u_{k}\|_{C^{1,\tilde{\alpha}}(\overline{V}_{l})}$ is bounded independently
of $k$. Therefore, we can inductively construct subsequences $u_{k_{l}}$
of $u_{k}$, such that $u_{k_{l}}$ is a subsequence of $u_{k_{l-1}}$;
and $u_{k_{l}}$ is weakly convergent in $W^{2,p}(V_{l})$, and strongly
convergent in $C^{1}(\overline{V}_{l})$. Also remember that $u_{k}$
uniformly converges to a continuous function $\tilde{u}\in C^{0}(\overline{U})$
that satisfies $\tilde{u}|_{\partial U}=\varphi$. Thus all the limits
of the subsequences $u_{k_{l}}$ must be equal to $\tilde{u}$. As
a result, $\tilde{u}$ belongs to $W_{\mathrm{loc}}^{2,p}(U)$ for
every $p<\infty$. Furthermore we have $D\tilde{u}\in K^{\circ}$;
because $Du_{k}\in K_{k}^{\circ}$, and thus $Du\in K_{k}^{\circ}$
for every $k$. So we have $\tilde{u}\in W_{K^{\circ},\varphi}(U)$,
since $\tilde{u}|_{\partial U}=\varphi$.

Now we will show that $\tilde{u}$ is the minimizer of $J[\cdot;U]$
over $W_{K^{\circ},\varphi}(U)$. Let $v\in W_{K^{\circ},\varphi}(U)$.
Then for every $k$ we have 
\[
v_{k}:=\begin{cases}
v & \textrm{in }U\\
\varphi & \textrm{in }U_{k}-U
\end{cases}\in W_{K_{k}^{\circ},\varphi}(U_{k}).
\]
(Note that $v_{k}$ is Lipschitz, so it belongs to $H^{1}(U_{k})$.)
Thus we get 
\begin{align*}
J[u_{k};U_{k}] & \le J[v_{k};U_{k}]=\int_{U_{k}}F(Dv_{k})+g(v_{k})\,dx\\
 & =\int_{U}F(Dv)+g(v)\,dx+\int_{U_{k}-U}F(D\varphi)+g(\varphi)\,dx\le J[v;U]+C\mathcal{L}^{n}(U_{k}-U),
\end{align*}
where $C>0$ is an upper bound for $F(D\varphi)+g(\varphi)$ on $\overline{U}_{1}$,
and $\mathcal{L}^{n}(U_{k}-U)$ is the Lebesgue measure of $U_{k}-U$.
But since $u_{k},Du_{k}$ are bounded independently of $k$, by the
Dominated Convergence Theorem we have $J[u_{k};U_{k}]\to J[\tilde{u};U]$,
where the limit is taken through the diagonal subsequence $u_{l_{l}}$,
constructed in the previous paragraph. Also, $\mathcal{L}^{n}(U_{k}-U)\to0$
as $k\to\infty$. Hence $\tilde{u}$ is the minimizer of $J[\cdot;U]$
over $W_{K^{\circ},\varphi}(U)$, and therefore we must have $\tilde{u}=u$.
Thus $u\in W_{\mathrm{loc}}^{2,p}(U)$ for every $p<\infty$. Then
similarly to the proof of Theorem \ref{thm: Reg NonConv dom}, we
can conclude that $u\in W_{\mathrm{loc}}^{2,\infty}(U)$, as desired.%
\end{proof}

\appendix

\section{\label{sec: Local-Optimal-Reg}Local Optimal Regularity}

In this appendix we prove the local optimal regularity for variational
problems with gradient constraints. This result has been used in the
previous section to obtain the global optimal regularity. Most of
the methods employed in this section are classical and well known,
but to the best of author's knowledge the results have not appeared
elsewhere. Especially since the results are about the double obstacle
problem, and there are far fewer works on this problem compared to
the obstacle problem. Nevertheless, we include the proofs here for
completeness.%

First let us state our assumptions. We need a strict version of the
inequality (\ref{eq: phi Lip}), i.e. the Lipschitz property of $\varphi$.
We also need an upper bound on the weak second derivative of $\gamma$.
\begin{assumption}
\label{assu: 3} We assume that 
\begin{enumerate}
\item[\upshape{(a)}] $K\subset\R^{n}$ is a compact convex set whose interior contains
the origin. Furthermore we have 
\begin{equation}
\mathfrak{D}_{h,\xi}^{2}\gamma(x):=\frac{\gamma(x+h\xi)+\gamma(x-h\xi)-2\gamma(x)}{h^{2}}\leq\frac{C_{2}}{\gamma(x)-h},\label{eq: bd D2 g}
\end{equation}
for some $C_{2}>0$, and every nonzero $x,\xi\in\mathbb{R}^{n}$ with
$\gamma(\xi),\gamma(-\xi)\le1$, and every $0<h<\gamma(x)$. 
\item[\upshape{(b)}] $U\subset\R^{n}$ is a bounded open set with Lipschitz boundary.
\item[\upshape{(c)}] $\varphi:\R^{n}\to\R$ is a continuous function, and for all $x\ne y\in\R^{n}$
we have 
\begin{equation}
-\gamma(y-x)<\varphi(x)-\varphi(y)<\gamma(x-y).\label{eq: phi strct Lip}
\end{equation}
\end{enumerate}
\end{assumption}
\begin{rem*}
Note that $\gamma$ satisfies the inequality (\ref{eq: bd D2 g})
if and only if $\bar{\gamma}$ does.
\end{rem*}
\begin{lem}
\label{lem: C2 -> ass 3}The inequality (\ref{eq: bd D2 g}) holds
when $\gamma$ is $C^{2}$ on $\mathbb{R}^{n}-\{0\}$, or equivalently
when $\partial K$ is $C^{2}$.
\end{lem}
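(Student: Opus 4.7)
The plan is to use a Taylor expansion of $\gamma$ along the segments $[x-h\xi, x+h\xi]$, combined with the $(-1)$-homogeneity of $D^2\gamma$ recalled in (\ref{eq: homog}) and the compactness of $\partial K$.

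First I would verify that these segments stay in the $C^2$-region $\R^n-\{0\}$ by quantifying how far from the origin they lie. For any $t\in[-1,1]$, the subadditivity and positive $1$-homogeneity of $\gamma$ give
\[
\gamma(x+th\xi)\ge \gamma(x)-\gamma(-th\xi)\ge \gamma(x)-|t|h\max\{\gamma(\xi),\gamma(-\xi)\}\ge \gamma(x)-h>0,
\]
so the entire segment sits in the open set where $\gamma$ is $C^2$.

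Next, since $\gamma$ is $C^{2}$ on a neighborhood of each of these segments, the one-dimensional Taylor formula with integral remainder applied to $t\mapsto\gamma(x+th\xi)$ yields
\[
\gamma(x+h\xi)+\gamma(x-h\xi)-2\gamma(x)=h^{2}\int_{0}^{1}(1-s)\bigl[\xi^{T}D^{2}\gamma(x+sh\xi)\xi+\xi^{T}D^{2}\gamma(x-sh\xi)\xi\bigr]\,ds.
\]
Using (\ref{eq: homog}), every point $z$ on either segment satisfies $D^{2}\gamma(z)=\tfrac{1}{\gamma(z)}D^{2}\gamma(z/\gamma(z))$ with $z/\gamma(z)\in\partial K$. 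Since $D^{2}\gamma$ is continuous on the compact set $\partial K$, there is $M>0$ with $|\eta^{T}D^{2}\gamma(\hat z)\eta|\le M|\eta|^{2}$ for all $\hat z\in\partial K$. Because $\gamma(\pm\xi)\le 1$ and $\gamma\ge c|\cdot|$ for some $c>0$ (from $K\subseteq B_{C}(0)$), we have $|\xi|\le 1/c$. Combining this with the lower bound $\gamma(z)\ge \gamma(x)-h$ on the segments gives
\[
\xi^{T}D^{2}\gamma(z)\xi\le \frac{M|\xi|^{2}}{\gamma(z)}\le \frac{M/c^{2}}{\gamma(x)-h}.
\]

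Inserting this estimate into the Taylor identity and dividing by $h^{2}$, we obtain
\[
\mathfrak{D}_{h,\xi}^{2}\gamma(x)\le 2\cdot\frac{M/c^{2}}{\gamma(x)-h}\int_{0}^{1}(1-s)\,ds=\frac{M/c^{2}}{\gamma(x)-h},
\]
so the claim holds with $C_{2}:=M/c^{2}$. The only delicate point is keeping the Taylor remainder valid, which is why the preliminary lower bound $\gamma(x+th\xi)\ge \gamma(x)-h$ is essential; everything else is a direct combination of homogeneity and compactness. The equivalence of $\gamma\in C^{2}(\R^{n}-\{0\})$ with $\partial K\in C^{2}$ is already recorded in Subsection~\ref{subsec: Reg gaug}.
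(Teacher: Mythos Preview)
Your proof is correct and follows essentially the same approach as the paper: both arguments first show the segment $[x-h\xi,x+h\xi]$ avoids the origin via the bound $\gamma(x+th\xi)\ge\gamma(x)-h$, then reduce the second difference to a pointwise bound on $D^{2}_{\xi\xi}\gamma$ on that segment, and finally exploit the $(-1)$-homogeneity of $D^{2}\gamma$ together with compactness of $\partial K$. The only cosmetic difference is that the paper uses two applications of the mean value theorem (yielding $\mathfrak{D}_{h,\xi}^{2}\gamma(x)\le 2D_{\xi\xi}^{2}\gamma(x+r\xi)$ for some intermediate point $r$) and absorbs the constraint $\gamma(\pm\xi)\le1$ directly into the compactness step by maximizing over $(K\cap(-K))\times\partial K$, whereas you use the integral-remainder form of Taylor and bound $|\xi|$ separately.
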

\begin{proof}
First note that $\gamma$ is nonzero on the segment $\{x+\tau\xi:-h\leq\tau\leq h\}$.
Because $\gamma(x)>h$ and $\gamma(\xi),\gamma(-\xi)\le1$, so by
the triangle inequality we get 
\begin{equation}
\gamma(x+\tau\xi)\ge\gamma(x)-\gamma(-\tau\xi)=\gamma(x)-|\tau|\gamma(\pm\xi)\ge\gamma(x)-h>0.\label{eq: 1 in bd D2g}
\end{equation}
Thus $\gamma$ is twice differentiable on this segment. Therefore,
we can apply the mean value theorem to the restriction of $\gamma$
and $D_{\xi}\gamma$ to the segment. Hence we get 
\begin{align*}
\mathfrak{D}_{h,\xi}^{2}\gamma(x) & =\frac{\gamma(x+h\xi)-\gamma(x)+\gamma(x-h\xi)-\gamma(x)}{h^{2}}\\
 & =\frac{hD_{\xi}\gamma(x+s\xi)-hD_{\xi}\gamma(x-t\xi)}{h^{2}}=\frac{(s+t)}{h}D_{\xi\xi}^{2}\gamma(x+r\xi)\le2D_{\xi\xi}^{2}\gamma(x+r\xi).
\end{align*}
Here, $0<s,t<h$ and $-t<r<s$; and we used the fact that $D_{\xi\xi}^{2}\gamma\ge0$,
due to the convexity of $\gamma$. Now, let $C_{2}>0$ be the maximum
of the continuous function 
\[
(\zeta,z)\mapsto2D_{\zeta\zeta}^{2}\gamma(z)=2\langle D^{2}\gamma(z)\,\zeta,\zeta\rangle
\]
over the compact set $(K\cap(-K))\times\partial K$. Then by $(-1)$-homogeneity
of $D^{2}\gamma$ we get 
\[
2D_{\xi\xi}^{2}\gamma(x+r\xi)=\frac{2}{\gamma(x+r\xi)}D_{\xi\xi}^{2}\gamma\Big(\frac{x+r\xi}{\gamma(x+r\xi)}\Big)\le\frac{C_{2}}{\gamma(x+r\xi)}\le\frac{C_{2}}{\gamma(x)-h}.
\]
Which is the desired result. Note that in the last inequality above,
we used (\ref{eq: 1 in bd D2g}).
\end{proof}
Remember that for some $C_{1}\ge C_{0}>0$, we have 
\begin{equation}
C_{0}|x|\le\gamma(x)\le C_{1}|x|,\label{eq: bd gama}
\end{equation}
for all $x\in\mathbb{R}^{n}$. Obviously, this inequality also holds
if we replace $\gamma$ with $\bar{\gamma}$.
\begin{lem}
\label{lem: rho + rho bar < 2d}Suppose that the strict Lipschitz
property (\ref{eq: phi strct Lip}) for $\varphi$ holds. Then for
all $x\notin\partial U$ we have 
\[
0<\rho(x)+\bar{\rho}(x)\le2C_{1}d(x),
\]
where $d$ is the Euclidean distance to $\partial U$.
\end{lem}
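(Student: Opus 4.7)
The plan is to prove the two bounds separately; both follow directly from the definitions of $\rho,\bar{\rho}$ together with the basic properties of $\gamma$ stated in the preliminaries.

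\textbf{Upper bound.} For the estimate $\rho(x)+\bar{\rho}(x)\le 2C_1 d(x)$, I would pick any Euclidean nearest point $y^{*}\in\partial U$ to $x$, so that $|x-y^{*}|=d(x)$. Using $y^{*}$ as a test point in both definitions
\[
\rho(x)\le\gamma(x-y^{*})+\varphi(y^{*}),\qquad \bar{\rho}(x)\le\gamma(y^{*}-x)-\varphi(y^{*}),
\]
and adding the two inequalities cancels the $\varphi(y^{*})$ terms. Then the comparison $\gamma(v)\le C_{1}|v|$ from (\ref{eq: bd gama}), applied to both $v=x-y^{*}$ and $v=y^{*}-x$, yields the desired bound.

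\textbf{Lower bound.} For the strict positivity $\rho(x)+\bar{\rho}(x)>0$, let $y_{1}\in\partial U$ be a $\rho$-closest point to $x$ and $y_{2}\in\partial U$ be a $\bar{\rho}$-closest point to $x$, so that
\[
\rho(x)+\bar{\rho}(x)=\gamma(x-y_{1})+\gamma(y_{2}-x)+\varphi(y_{1})-\varphi(y_{2}).
\]
I would then split into two cases. If $y_{1}=y_{2}=:y$, the $\varphi$ terms cancel, and since $x\notin\partial U$ we have $x\neq y$, so by (\ref{eq: bd gama}) both $\gamma(x-y)$ and $\gamma(y-x)$ are strictly positive, which gives the claim. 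If $y_{1}\neq y_{2}$, the strict Lipschitz property (\ref{eq: phi strct Lip}) gives $\varphi(y_{1})-\varphi(y_{2})>-\gamma(y_{2}-y_{1})$, while the subadditivity of $\gamma$ gives $\gamma(y_{2}-y_{1})\le\gamma(y_{2}-x)+\gamma(x-y_{1})$. Combining these,
\[
\rho(x)+\bar{\rho}(x)>\gamma(x-y_{1})+\gamma(y_{2}-x)-\gamma(y_{2}-y_{1})\ge 0,
\]
again as desired.

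There is no real obstacle here: the upper bound is a one-line use of the Euclidean projection together with the linear bound on $\gamma$, and the lower bound is a straightforward case split designed precisely to exploit the strict version of the Lipschitz hypothesis (\ref{eq: phi strct Lip}) on $\varphi$. The only point to be mindful of is that the lower bound is strict but not uniform in $x$, which is exactly what the statement claims; a uniform lower bound would require quantitative versions of the strict inequalities.
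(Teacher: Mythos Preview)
Your proof is correct and follows essentially the same approach as the paper: the upper bound uses a single test point on $\partial U$ in both minima and the linear bound on $\gamma$, and the lower bound is the same two-case split (equal versus distinct minimizers) combining subadditivity of $\gamma$ with the strict Lipschitz property~(\ref{eq: phi strct Lip}).
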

\begin{rem*}
Note that the above inequality implies that the two obstacles do not
touch inside $U$. Also note that for $y\in\partial U$ we have $\rho(y)+\bar{\rho}(y)=0$,
since we have seen that $\rho(y)=\varphi(y)$ and $\bar{\rho}(y)=-\varphi(y)$.%
\end{rem*}
\begin{proof}
We have $\rho(x)=\gamma(x-y)+\varphi(y)$, and $\bar{\rho}(x)=\gamma(z-x)-\varphi(z)$,
for some $y,z\in\partial U$. Thus when $z=y$ we have $\rho(x)+\bar{\rho}(x)=\gamma(x-y)+\gamma(y-x)>0$,
since $x\ne y$. And when $z\ne y$ we have 
\begin{align*}
\rho(x)+\bar{\rho}(x) & =\gamma(x-y)+\varphi(y)+\gamma(z-x)-\varphi(z)\\
 & \ge\gamma(z-y)+\varphi(y)-\varphi(z)>0,
\end{align*}
due to the triangle inequality for $\gamma$ and (\ref{eq: phi strct Lip}).
On the other hand, for $y\in\partial U$ we have 
\[
\rho(x)+\bar{\rho}(x)\le\gamma(x-y)+\varphi(y)+\gamma(y-x)-\varphi(y)\le2C_{1}|x-y|.
\]
Hence as $y$ is arbitrary we get $\rho(x)+\bar{\rho}(x)\le2C_{1}d(x)$.
\end{proof}
Let $\eta_{\varepsilon}$ be the standard mollifier. Then we define
\begin{align}
 & \psi_{\varepsilon}(x):=(\eta_{\varepsilon}*\rho)(x):=\int_{|y|\leq\varepsilon}\eta_{\varepsilon}(y)\rho(x-y)\,dy,\nonumber \\
 & \phi_{\varepsilon}(x):=-(\eta_{\varepsilon}*\bar{\rho})(x)+\delta_{\varepsilon},\label{eq: psi, phi}
\end{align}
where $3C_{1}\varepsilon<\delta_{\varepsilon}<4C_{1}\varepsilon$
is chosen such that $\partial\{\phi_{\varepsilon}<\psi_{\varepsilon}\}$
is $C^{\infty}$, which is possible by Sard's Theorem. Note that since
$\rho,\bar{\rho}$ are defined on all of $\mathbb{R}^{n}$, $\psi_{\varepsilon},\phi_{\varepsilon}$
are smooth functions on $\mathbb{R}^{n}$. Also 
\begin{align*}
|\psi_{\varepsilon}(x)-\rho(x)| & \leq\int_{|y|\leq\varepsilon}\eta_{\varepsilon}(y)|\rho(x-y)-\rho(x)|\,dy\\
 & \le\int_{|y|\leq\varepsilon}\eta_{\varepsilon}(y)\max\{\gamma(-y),\gamma(y)\}\,dy\leq\int_{|y|\leq\varepsilon}C_{1}|y|\eta_{\varepsilon}(y)\,dy\le C_{1}\varepsilon.
\end{align*}
Notice that we used (\ref{eq: rho Lip}) in the second inequality.
Similarly we have 
\[
2C_{1}\varepsilon<\phi_{\varepsilon}-(-\bar{\rho})<5C_{1}\varepsilon.
\]

Now, let 
\begin{equation}
U_{\varepsilon}:=\{x\in U:\phi_{\varepsilon}(x)<\psi_{\varepsilon}(x)\}.\label{eq: U_e}
\end{equation}
Then we have 
\begin{equation}
\{x\in U:\rho(x)+\bar{\rho}(x)>5C_{1}\varepsilon\}\subset U_{\varepsilon}\subset\{x\in\overline{U}:\phi_{\varepsilon}(x)\leq\psi_{\varepsilon}(x)\}\subset\{x\in U:d(x)>\varepsilon\}.\label{eq: inclusions}
\end{equation}
To see this note that $\phi_{\varepsilon}(x)\leq\psi_{\varepsilon}(x)$
implies that 
\[
3C_{1}\varepsilon<\delta_{\varepsilon}\le(\rho+\bar{\rho})\ast\eta_{\varepsilon}\le\rho+\bar{\rho}+C_{1}\varepsilon\le2C_{1}d(x)+C_{1}\varepsilon.
\]
Hence $d(x)>\varepsilon$. On the other hand, if $\phi_{\varepsilon}(x)\ge\psi_{\varepsilon}(x)$
then 
\[
4C_{1}\varepsilon>\delta_{\varepsilon}\ge(\rho+\bar{\rho})\ast\eta_{\varepsilon}\ge\rho+\bar{\rho}-C_{1}\varepsilon.
\]
Thus $\rho(x)+\bar{\rho}(x)<5C_{1}\varepsilon$. Hence $\rho(x)+\bar{\rho}(x)>5C_{1}\varepsilon$
implies $\phi_{\varepsilon}(x)<\psi_{\varepsilon}(x)$, as desired.
\begin{rem*}
The above inclusions show that $\overline{U}_{\varepsilon}\subset U$,
and 
\begin{equation}
U=\bigcup_{\varepsilon>0}U_{\varepsilon};\label{eq: U=00003DU U_e}
\end{equation}
since by Lemma \ref{lem: rho + rho bar < 2d} we know that $\rho+\bar{\rho}>0$
on $U$. In addition, remember that we have chosen $\delta_{\varepsilon}$
so that $\partial U_{\varepsilon}$ is $C^{\infty}$. Furthermore,
for every $\varepsilon$ there is $\tilde{\varepsilon}$ such that
\begin{equation}
U_{\varepsilon}\subset\{d>\varepsilon\}\subset\{\rho+\bar{\rho}>5C_{1}\tilde{\varepsilon}\}\subset U_{\tilde{\varepsilon}}.\label{eq: U_e subst U_e'}
\end{equation}
Because otherwise for every $j$ there is $x_{j}\in U$ such that
$d(x_{j})>\varepsilon$, while $\rho(x_{j})+\bar{\rho}(x_{j})\le\frac{1}{j}$.
But due to the compactness we can assume that $x_{j}\to x\in\overline{U}$.
Then by continuity we must have $\rho(x)+\bar{\rho}(x)=0$ and $d(x)\ge\varepsilon$.
Now by Lemma \ref{lem: rho + rho bar < 2d}, $\rho(x)+\bar{\rho}(x)=0$
implies that $x\in\partial U$, which contradicts the fact that $d(x)\ge\varepsilon$.
\end{rem*}

\begin{lem}
Suppose the Assumption \ref{assu: 3} holds. Then we have 
\[
D\phi_{\varepsilon},D\psi_{\varepsilon}\in K^{\circ}.
\]
Furthermore, for any unit vector $\xi$, and every $x\in U$ with
$d(x)>\varepsilon$ we have 
\begin{align}
 & D_{\xi\xi}^{2}\psi_{\varepsilon}(x)\leq\frac{C_{3}}{d(x)-\varepsilon},\nonumber \\
 & D_{\xi\xi}^{2}\phi_{\varepsilon}(x)\geq\frac{-C_{3}}{d(x)-\varepsilon},\label{eq: bd D2 phi}
\end{align}
where $C_{3}:=C_{0}^{-1}C_{1}^{2}C_{2}$, and $d$ is the Euclidean
distance to $\partial U$.
\end{lem}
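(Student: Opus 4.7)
\medskip

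The plan is to handle the two claims separately, since the first is essentially a convexity fact about mollification and the second is a quantitative semiconcavity estimate inherited from $\gamma$ through its infimal representation of $\rho$.

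For the gradient claim, I would start from the observation that $D\rho\in K^\circ$ a.e., which was already established in the proof of Proposition~\ref{prop: equiv} from the Lipschitz inequality (\ref{eq: rho Lip}) via Lemma~2.1 of \citep{MR1797872}. Since $\psi_\varepsilon=\eta_\varepsilon*\rho$ is smooth, its gradient is $D\psi_\varepsilon(x)=\int\eta_\varepsilon(y)\,D\rho(x-y)\,dy$, which is a weighted average of vectors in the closed convex set $K^\circ$ with nonnegative weights of total mass $1$, hence lies in $K^\circ$. The same argument applied to $-\bar\rho$ (which satisfies $D(-\bar\rho)\in K^\circ$ a.e.\ because $\bar\gamma$ is the gauge of $-K$ and so $(-K)^\circ=-K^\circ$) shows $D\phi_\varepsilon\in K^\circ$.

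For the second-derivative bounds, the central step is a pointwise estimate on difference quotients of $\rho$. If $y_0\in\partial U$ is a $\rho$-closest point to $z$, then from $\rho(z\pm h\xi)\le\gamma(z\pm h\xi-y_0)+\varphi(y_0)$ and $\rho(z)=\gamma(z-y_0)+\varphi(y_0)$ one gets
\[
\mathfrak{D}_{h,\xi}^2\rho(z)\le\mathfrak{D}_{h,\xi}^2\gamma(z-y_0).
\]
To apply (\ref{eq: bd D2 g}), which requires $\gamma(\pm\xi)\le 1$, I rescale: writing $\tilde\xi:=\xi/C_1$ and $\tilde h:=C_1 h$, we have $h\xi=\tilde h\tilde\xi$ and $\gamma(\pm\tilde\xi)\le 1$, so
\[
\mathfrak{D}_{h,\xi}^2\gamma(z-y_0)=C_1^2\,\mathfrak{D}_{\tilde h,\tilde\xi}^2\gamma(z-y_0)\le\frac{C_1^2C_2}{\gamma(z-y_0)-C_1 h},
\]
valid whenever $C_1 h<\gamma(z-y_0)$.

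Next I would bound $\gamma(z-y_0)$ from below. For $z$ in the integration domain $|x-z|\le\varepsilon$ of the mollifier, $d(z)\ge d(x)-\varepsilon$, and since $y_0\in\partial U$ we have $|z-y_0|\ge d(z)$, so $\gamma(z-y_0)\ge C_0(d(x)-\varepsilon)$ by (\ref{eq: bd gama}). Integrating against $\eta_\varepsilon$ gives
\[
\mathfrak{D}_{h,\xi}^2\psi_\varepsilon(x)\le\frac{C_1^2C_2}{C_0(d(x)-\varepsilon)-C_1 h}
\]
for all sufficiently small $h>0$; since $\psi_\varepsilon$ is smooth, passing to the limit $h\to 0^+$ yields $D_{\xi\xi}^2\psi_\varepsilon(x)\le C_3/(d(x)-\varepsilon)$ with $C_3=C_0^{-1}C_1^2C_2$. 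The argument for $\phi_\varepsilon$ is identical after noting that $\bar\gamma$ also satisfies (\ref{eq: bd D2 g}) (as recorded in the remark after Assumption~\ref{assu: 3}), so the same upper bound holds for $\mathfrak{D}_{h,\xi}^2\bar\rho$, and the minus sign in $\phi_\varepsilon=-\eta_\varepsilon*\bar\rho+\delta_\varepsilon$ converts this into the desired lower bound. The main bookkeeping obstacle is simply the careful tracking of the rescaling factors $C_0,C_1$ so that the final constant agrees with $C_3=C_0^{-1}C_1^2C_2$.
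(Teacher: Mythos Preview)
Your proposal is correct and follows essentially the same route as the paper. The only cosmetic differences are that the paper phrases the gradient step via Jensen's inequality for $\gamma^{\circ}$ (equivalent to your convex-average argument since $K^{\circ}=\{\gamma^{\circ}\le 1\}$), and the paper first proves the second-difference bound for $\xi$ with $\gamma(\pm\xi)\le 1$ and rescales only after passing to the limit $h\to 0^{+}$, whereas you rescale at the level of the difference quotient itself; both orderings yield the same constant $C_3=C_0^{-1}C_1^{2}C_2$.
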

\begin{proof}
To show the first part, note that $\rho,\bar{\rho}$ are Lipschitz
functions and $D\rho,-D\bar{\rho}\in K^{\circ}$ a.e., as shown in
\citep{MR1797872} using the property (\ref{eq: rho Lip}). Then because
of Jensen's inequality, and convexity and homogeneity of $\gamma^{\circ}$,
we have 
\begin{align*}
\gamma^{\circ}(D\psi_{\varepsilon}(x)) & \leq\int_{|y|\leq\varepsilon}\gamma^{\circ}(\eta_{\varepsilon}(y)D\rho(x-y))\,dy\\
 & =\int_{|y|\leq\varepsilon}\eta_{\varepsilon}(y)\gamma^{\circ}(D\rho(x-y))\,dy\leq\int_{|y|\leq\varepsilon}\eta_{\varepsilon}(y)\,dy\;=1.
\end{align*}
The case of $\phi_{\varepsilon}$ is similar.

Next, we assume initially that $\gamma(\xi),\gamma(-\xi)\le1$. Let
$x\in U$, then 
\[
\rho(x)=\gamma(x-y)+\varphi(y)
\]
for some $y\in\partial U$. We also have $\rho(\cdot)\leq\gamma(\cdot-y)+\varphi(y)$.
Thus by (\ref{eq: bd D2 g}) we get 
\begin{align}
\mathfrak{D}_{h,\xi}^{2}\rho(x) & :=\frac{\rho(x+h\xi)+\rho(x-h\xi)-2\rho(x)}{h^{2}}\nonumber \\
 & \le\frac{\gamma(x+h\xi-y)+\varphi(y)+\gamma(x-h\xi-y)+\varphi(y)-2(\gamma(x-y)+\varphi(y))}{h^{2}}\label{eq: bd D2 rho}\\
 & =\mathfrak{D}_{h,\xi}^{2}\gamma(x-y)\le\frac{C_{2}}{\gamma(x-y)-h}\le\frac{C_{2}}{C_{0}|x-y|-h}\le\frac{C_{2}}{C_{0}d(x)-h},\nonumber 
\end{align}
for $0<h<C_{0}d(x)$.

Now suppose $d(x)>C_{0}^{-1}h+\varepsilon$. Then due to the Lipschitz
continuity of $d$, for $|y|\le\varepsilon$ we have 
\[
C_{0}d(x-y)\geq C_{0}d(x)-C_{0}|y|\ge C_{0}d(x)-C_{0}\varepsilon>h.
\]
Hence by (\ref{eq: bd D2 rho}) we get 
\begin{align*}
\mathfrak{D}_{h,\xi}^{2}\psi_{\varepsilon}(x) & =\int_{|y|\le\varepsilon}\eta_{\varepsilon}(y)\mathfrak{D}_{h,\xi}^{2}\rho(x-y)\,dy\\
 & \leq\int_{|y|\le\varepsilon}\eta_{\varepsilon}(y)\frac{C_{2}}{C_{0}d(x-y)-h}\,dy\\
 & \leq\int_{|y|\le\varepsilon}\eta_{\varepsilon}(y)\frac{C_{2}}{C_{0}d(x)-C_{0}\varepsilon-h}\,dy=\frac{C_{2}}{C_{0}d(x)-C_{0}\varepsilon-h}.
\end{align*}
Let $h\rightarrow0^{+}$. Then for $x\in U$ with $d(x)>\varepsilon$
we get 
\[
D_{\xi\xi}^{2}\psi_{\varepsilon}(x)\leq\frac{C_{0}^{-1}C_{2}}{d(x)-\varepsilon}.
\]
Now assume that $|\xi|=1$. Then for $\hat{\xi}:=\frac{1}{C_{1}}\xi$
we have $\gamma(\hat{\xi}),\gamma(-\hat{\xi})\le1$. We can apply
the above inequality to $\hat{\xi}$ to get 
\[
D_{\xi\xi}^{2}\psi_{\varepsilon}(x)=C_{1}^{2}D_{\hat{\xi}\hat{\xi}}^{2}\psi_{\varepsilon}(x)\leq\frac{C_{0}^{-1}C_{1}^{2}C_{2}}{d(x)-\varepsilon}.
\]
The inequality for $\phi_{\varepsilon}$ follows similarly.
\end{proof}
Next, let $u_{\varepsilon}$ be the minimizer of 
\begin{equation}
J_{\varepsilon}[v]:=J[v;U_{\varepsilon}]=\int_{U_{\varepsilon}}F(Dv)+g(v)\,dx\label{eq: u_e}
\end{equation}
over $W_{\phi_{\varepsilon},\psi_{\varepsilon}}:=\{v\in H^{1}(U_{\varepsilon}):\phi_{\varepsilon}\leq v\leq\psi_{\varepsilon}\textrm{ a.e.}\}$.
Take an arbitrary $v$ in this space. Then $u_{\varepsilon}+t(v-u_{\varepsilon})$
is in this space for $0\le t\le1$. Thus 
\[
\frac{d}{dt}\bigg|_{t=0}J_{\varepsilon}[u_{\varepsilon}+t(v-u_{\varepsilon})]\ge0.
\]
By using the bounds (\ref{eq: Bnds}) on $F,g$, we arrive at the
\textit{variational inequality} 
\begin{equation}
\int_{U_{\varepsilon}}D_{i}F(Du_{\varepsilon})D_{i}(v-u_{\varepsilon})+g'(u_{\varepsilon})(v-u_{\varepsilon})\,dx\,\ge0.\label{eq: var ineq u_e}
\end{equation}
For the details see, for example, the proof of Theorem 3.37 in \citep{MR2361288}. 

\textcolor{blue}{}
\begin{lem}
Suppose the Assumptions \ref{assu: 1},\ref{assu: 3} hold. Then
we have 
\[
u_{\varepsilon}\in\underset{p<\infty}{\bigcap}W^{2,p}(U_{\varepsilon})\subset\underset{\tilde{\alpha}<1}{\bigcap}C^{1,\tilde{\alpha}}(\overline{U}_{\varepsilon}).
\]
\end{lem}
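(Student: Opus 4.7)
The plan is a penalization argument followed by elliptic bootstrap. Fix a smooth non-decreasing convex function $\beta : \R \to [0,\infty)$ vanishing on $(-\infty, 0]$ and satisfying $\beta(t) = t - 1$ for $t \ge 2$; for $\delta > 0$ set $\beta_\delta := \beta/\delta$, and let $B_\delta \ge 0$ be a primitive of $\beta_\delta$. I would consider the strictly convex coercive penalized functional
\[
v \mapsto J_\varepsilon[v] + \int_{U_\varepsilon} \bigl( B_\delta(v - \psi_\varepsilon) + B_\delta(\phi_\varepsilon - v) \bigr)\,dx
\]
on $\psi_\varepsilon + H_0^1(U_\varepsilon)$, whose unique minimizer $u^\delta$ satisfies the Euler-Lagrange equation
\[
\begin{cases}
-D_i(D_i F(Du^\delta)) + g'(u^\delta) + \beta_\delta(u^\delta - \psi_\varepsilon) - \beta_\delta(\phi_\varepsilon - u^\delta) = 0 & \textrm{in } U_\varepsilon,\\
u^\delta = \psi_\varepsilon & \textrm{on } \partial U_\varepsilon.
\end{cases}
\]
Since $\partial U_\varepsilon$ and $\psi_\varepsilon$ are $C^\infty$, $F, g, \beta_\delta$ are smooth, and the equation is uniformly elliptic, Ladyzhenskaya-Uraltseva together with Schauder theory give $u^\delta \in C^{2,\bar\alpha}(\overline U_\varepsilon)$.

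The first key step is to bound the penalty terms uniformly in $\delta$. Set $m^\delta := \max_{\overline U_\varepsilon}(u^\delta - \psi_\varepsilon)$; if $m^\delta > 0$, this maximum is attained at an interior point $x_0$, where $Du^\delta(x_0) = D\psi_\varepsilon(x_0)$, $D^2 u^\delta(x_0) \le D^2 \psi_\varepsilon(x_0)$ as symmetric matrices, and $\phi_\varepsilon(x_0) < u^\delta(x_0)$, so $\beta_\delta(\phi_\varepsilon - u^\delta)(x_0) = 0$. Using the positive-definiteness of $D^2 F$ and the bounds (\ref{eq: Bnds}) in the equation at $x_0$ yields
\[
\beta_\delta(m^\delta) \le D^2_{ij} F(D\psi_\varepsilon(x_0))\, D^2_{ij} \psi_\varepsilon(x_0) - g'(u^\delta(x_0)) \le C_0 + c_5\bigl(1 + \|\psi_\varepsilon\|_{L^\infty} + m^\delta\bigr),
\]
where $C_0$ depends only on $\|\psi_\varepsilon\|_{C^2}$ and the constants in (\ref{eq: Bnds}). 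Since $\beta_\delta(m^\delta) \ge (m^\delta - 1)/\delta$ for $m^\delta \ge 2$, a short bootstrap forces $m^\delta \to 0$ as $\delta \to 0$ and $\|\beta_\delta(u^\delta - \psi_\varepsilon)\|_{L^\infty(U_\varepsilon)} \le C$ uniformly in $\delta$; the symmetric argument yields the same bound for $\beta_\delta(\phi_\varepsilon - u^\delta)$.

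Rewriting the penalized equation in non-divergence form as $D^2_{ij} F(Du^\delta)\, D^2_{ij} u^\delta = G^\delta$, the right-hand side $G^\delta$ is therefore uniformly bounded in $L^\infty(U_\varepsilon)$. Interior and boundary De Giorgi-Nash-Moser / Ladyzhenskaya-Uraltseva theory applied to the resulting quasilinear equation (with uniform $L^\infty$ forcing, uniform $L^\infty$ bound on $u^\delta$ coming from the obstacles, smooth Dirichlet data, and $C^\infty$ boundary) yields a uniform $C^{1,\alpha}(\overline U_\varepsilon)$ estimate for $u^\delta$; consequently the coefficients $D^2_{ij} F(Du^\delta)$ are uniformly $C^\alpha$, and the linear Calderón-Zygmund $W^{2,p}$ estimate (Theorem 9.15 and Lemma 9.17 of \citep{MR1814364}) gives $\|u^\delta\|_{W^{2,p}(U_\varepsilon)} \le C_p'$ uniformly in $\delta$, for every $p < \infty$. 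A subsequence of $u^\delta$ converges weakly in each $W^{2,p}$ and strongly in $H^1$ to some $\tilde u$; a $\Gamma$-convergence argument (or direct energy comparison, using $m^\delta \to 0$) identifies $\tilde u = u_\varepsilon$ by uniqueness of the minimizer of the obstacle problem (\ref{eq: var ineq u_e}), whence $u_\varepsilon \in W^{2,p}(U_\varepsilon)$ for every $p < \infty$. The second inclusion then follows from Morrey's embedding $W^{2,p}(U_\varepsilon) \hookrightarrow C^{1,1 - n/p}(\overline U_\varepsilon)$.

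The main obstacle I anticipate is the uniform $C^{1,\alpha}$ estimate on $u^\delta$: the sign-indefinite and $\delta$-dependent penalty obstructs a direct application of off-the-shelf quasilinear estimates. The decisive observation is that the uniform $L^\infty$ bound on the penalty established in the second step allows one to treat the semilinear right-hand side as a bounded $L^\infty$ forcing with constant independent of $\delta$, placing the equation within the scope of the standard Ladyzhenskaya-Uraltseva framework, after which the linear $W^{2,p}$ machinery closes the argument.
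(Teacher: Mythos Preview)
Your proposal is correct and follows the same overall architecture as the paper: penalize the double obstacle, obtain $C^{2,\alpha}$ regularity of the penalized solutions, bound the penalty terms uniformly in $\delta$, bootstrap to uniform $C^{1,\alpha}$ and then $W^{2,p}$ estimates, and pass to the limit. The one substantive difference is in how you establish the uniform $L^\infty$ bound on the penalty. The paper multiplies the penalized equation by $(\beta_\delta(u_{\varepsilon,\delta}-\psi_\varepsilon))^{p-1}$, integrates by parts using the convexity of $F$, obtains $\|\beta_\delta(u_{\varepsilon,\delta}-\psi_\varepsilon)\|_{L^p}\le C_\varepsilon|U|^{1/p}$, and then sends $p\to\infty$; it also establishes a uniform $L^\infty$ bound on $u_{\varepsilon,\delta}$ separately via the comparison principle with large constants. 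Your pointwise maximum-principle argument at the interior maximum of $u^\delta-\psi_\varepsilon$ is a cleaner alternative that handles both the penalty bound and the $L^\infty$ bound on $u^\delta$ simultaneously, at the cost of requiring the $C^2$ regularity of $u^\delta$ up front (which you have). Both routes are classical; yours is somewhat more direct here. One small caveat: with your scaling $\beta_\delta=\beta/\delta$, the conclusion $m^\delta\to 0$ from $\beta(m^\delta)\le C\delta$ requires $\beta>0$ on $(0,\infty)$, which you should state explicitly (or use the scaling $\beta_\delta(t)=\beta(t/\delta)$ instead).
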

\begin{proof}
For $\delta>0$, let $\tilde{\beta}_{\delta}$ be a smooth increasing
convex function on $\mathbb{R}$, that vanishes on $(-\infty,0]$,
and equals $\frac{1}{2\delta}t^{2}$ for $t\ge\delta$. Set $\beta_{\delta}:=\tilde{\beta}_{\delta}'$.
Then $\beta_{\delta}$ is a smooth increasing function that vanishes
on $(-\infty,0]$, and equals $\frac{1}{\delta}t$ for $t\ge\delta$.
We further assume that $\beta_{\delta}$ is convex too. Let $u_{\varepsilon,\delta}$
be the minimizer of 
\begin{equation}
J_{\varepsilon,\delta}[v]:=\int_{U_{\varepsilon}}F(Dv)+g(v)+\tilde{\beta}_{\delta}(\phi_{\varepsilon}-v)+\tilde{\beta}_{\delta}(v-\psi_{\varepsilon})\,dx,\label{eq: I_e,d}
\end{equation}
over $\phi_{\varepsilon}+H_{0}^{1}(U_{\varepsilon})$. By Theorems
3.30, 3.37 in \citep{MR2361288}, $u_{\varepsilon,\delta}$ exists
and is the unique weak solution to the Euler-Lagrange equation 
\begin{align}
-D_{i}(D_{i}F(Du_{\varepsilon,\delta}))+g'(u_{\varepsilon,\delta})-\beta_{\delta}(\phi_{\varepsilon}-u_{\varepsilon,\delta})+\beta_{\delta}(u_{\varepsilon,\delta}-\psi_{\varepsilon})=0,\nonumber \\
u_{\varepsilon,\delta}=\phi_{\varepsilon}\textrm{ on }\partial U_{\varepsilon}.\label{eq: E-L u-e,d}
\end{align}

As proved in \citep{MR749677}, $u_{\varepsilon,\delta}\in C^{1,\alpha_{0}}(\overline{U}_{\varepsilon})$
for some $\alpha_{0}>0$. On the other hand, as shown in Chapter 2
of \citep{MR717034}, by using the difference quotient technique we
get $u_{\varepsilon,\delta}\in H_{\textrm{loc}}^{2}(U_{\varepsilon})$.
Hence we have 
\[
-a_{ij,\delta}(x)D_{ij}^{2}u_{\varepsilon,\delta}(x)=b_{\delta}(x),
\]
for a.e. $x\in U_{\varepsilon}$. Where $a_{ij,\delta}(x):=D_{ij}^{2}F(Du_{\varepsilon,\delta}(x))$,
and 
\[
b_{\delta}:=-g'(u_{\varepsilon,\delta})+\beta_{\delta}(\phi_{\varepsilon}-u_{\varepsilon,\delta})-\beta_{\delta}(u_{\varepsilon,\delta}-\psi_{\varepsilon}).
\]
Note that $a_{ij,\delta}\in C^{0,\alpha_{1}}(\overline{U}_{\varepsilon})$,
$b_{\delta}\in C^{1,\alpha_{1}}(\overline{U}_{\varepsilon})$, where
$\alpha_{1}=\min\{\bar{\alpha},\alpha_{0}\}$. Thus by using Schauder
estimates (see Theorem 6.14 of \citep{MR1814364}), we deduce that
$u_{\varepsilon,\delta}\in C^{2,\alpha_{1}}(\overline{U}_{\varepsilon})$.

We can easily show that $u_{\varepsilon,\delta}$ is uniformly bounded,
independently of $\delta$. Suppose $\delta\le\min\{1,\frac{1}{4c_{5}}\}$,
and $C^{+}\ge1+2\,\max_{x\in\overline{U}_{\varepsilon}}|\psi_{\varepsilon}(x)|$.
Then by the comparison principle (Theorem 10.1 of \citep{MR1814364})
to show that $u_{\varepsilon,\delta}\le C^{+}$, it is enough to show
that the constant function whose value is $C^{+}$ satisfies 
\[
-a_{ij}D_{ij}^{2}C^{+}+g'(C^{+})-\beta_{\delta}(\phi_{\varepsilon}-C^{+})+\beta_{\delta}(C^{+}-\psi_{\varepsilon})\ge0
\]
But this expression equals 
\begin{align*}
g'(C^{+})+\beta_{\delta}(C^{+}-\psi_{\varepsilon}) & \ge-c_{5}(C^{+}+1)+\frac{1}{\delta}(C^{+}-\psi_{\varepsilon})\\
 & \ge(\frac{1}{2\delta}-c_{5})C^{+}-c_{5}+\frac{1}{\delta}(\frac{1}{2}C^{+}-\psi_{\varepsilon})\ge c_{5}C^{+}-c_{5}\ge0.
\end{align*}
Similarly we can obtain a uniform lower bound for $u_{\varepsilon,\delta}$.

Now, add $D_{i}(D_{i}F(D\psi_{\varepsilon}))$ to the both sides of
(\ref{eq: E-L u-e,d}), and multiply the result by $(\beta_{\delta}(u_{\varepsilon,\delta}-\psi_{\varepsilon}))^{p-1}$
for some $p>2$, and integrate over $U_{\varepsilon}$ to obtain 
\begin{align}
\int_{U_{\varepsilon}}[-D_{i}(D_{i}F(Du_{\varepsilon,\delta}))+D_{i}(D_{i}F(D\psi_{\varepsilon}))](\beta_{\delta}(u_{\varepsilon,\delta}-\psi_{\varepsilon}))^{p-1}\,dx+\int_{U_{\varepsilon}}(\beta_{\delta}(u_{\varepsilon,\delta}-\psi_{\varepsilon}))^{p}\,dx\,\nonumber \\
=\int_{U_{\varepsilon}}[D_{i}(D_{i}F(D\psi_{\varepsilon}))-g'(u_{\varepsilon,\delta})](\beta_{\delta}(u_{\varepsilon,\delta}-\psi_{\varepsilon}))^{p-1}\,dx.\label{eq: b^p}
\end{align}
Note that $\beta_{\delta}(\phi_{\varepsilon}-u_{\varepsilon,\delta})\beta_{\delta}(u_{\varepsilon,\delta}-\psi_{\varepsilon})=0$.
After integration by parts, the first term becomes 
\[
(p-1)\int_{U_{\varepsilon}}[D_{i}F(Du_{\varepsilon,\delta})-D_{i}F(D\psi_{\varepsilon})][D_{i}u_{\varepsilon,\delta}-D_{i}\psi_{\varepsilon}]\beta_{\delta}'(u_{\varepsilon,\delta}-\psi_{\varepsilon})(\beta_{\delta}(u_{\varepsilon,\delta}-\psi_{\varepsilon}))^{p-2}\,dx\ge0.
\]
Note that we used the facts that $F$ is convex, and $u_{\varepsilon,\delta}-\psi_{\varepsilon}$
vanishes on $\partial U_{\varepsilon}$. By employing this inequality
in (\ref{eq: b^p}) we get 
\begin{align*}
\int_{U_{\varepsilon}}(\beta_{\delta}(u_{\varepsilon,\delta}-\psi_{\varepsilon}))^{p}\,dx & \le\int_{U_{\varepsilon}}[D_{i}(D_{i}F(D\psi_{\varepsilon}))-g'(u_{\varepsilon,\delta})](\beta_{\delta}(u_{\varepsilon,\delta}-\psi_{\varepsilon}))^{p-1}\,dx\\
 & \le C_{\varepsilon}\int_{U_{\varepsilon}}(\beta_{\delta}(u_{\varepsilon,\delta}-\psi_{\varepsilon}))^{p-1}\,dx\\
 & \le C_{\varepsilon}|U|^{\frac{1}{p}}\Big(\int_{U_{\varepsilon}}(\beta_{\delta}(u_{\varepsilon,\delta}-\psi_{\varepsilon}))^{p}\,dx\Big)^{\frac{p-1}{p}}.
\end{align*}
Here $C_{\varepsilon}$ is a constant independent of $\delta$, and
$|U|$ is the Lebesgue measure of $U$. Also in the second line we
used the uniform boundedness of $u_{\varepsilon,\delta}$, and in
the last line we used Holder's inequality. Thus we have 
\[
\|\beta_{\delta}(u_{\varepsilon,\delta}-\psi_{\varepsilon})\|_{L^{p}(U_{\varepsilon})}\le C_{\varepsilon}|U|^{\frac{1}{p}}.
\]

By sending $p\to\infty$ we get 
\[
\|\beta_{\delta}(u_{\varepsilon,\delta}-\psi_{\varepsilon})\|_{L^{\infty}(U_{\varepsilon})}\le C_{\varepsilon}.
\]
Similarly we obtain $\|\beta_{\delta}(\phi_{\varepsilon}-u_{\varepsilon,\delta})\|_{L^{\infty}(U_{\varepsilon})}\le C_{\varepsilon}$.
Consequently we have 
\begin{eqnarray}
u_{\varepsilon,\delta}-\psi_{\varepsilon}\le\delta(C_{\varepsilon}+1), &  & \phi_{\varepsilon}-u_{\varepsilon,\delta}\le\delta(C_{\varepsilon}+1).\label{eq: 1 in Reg u_e}
\end{eqnarray}
Utilizing these bounds, and the fact that $u_{\varepsilon,\delta}$
is uniformly bounded, in equation (\ref{eq: E-L u-e,d}), gives us
\[
\|D_{i}(D_{i}F(Du_{\varepsilon,\delta}))\|_{L^{\infty}(U_{\varepsilon})}\le C,
\]
for some $C$ independent of $\delta$. Equivalently we have the quasilinear
elliptic equation 
\[
-D_{i}(D_{i}F(Du_{\varepsilon,\delta}))=b_{\delta}(x),
\]
and $\|b_{\delta}\|_{L^{\infty}(U_{\varepsilon})}\le C$. Then Theorem
15.9 of \citep{MR1814364} implies that $\|Du_{\varepsilon,\delta}\|_{C^{0}(\overline{U}_{\varepsilon})}\le C$,
for some $C$ independent of $\delta$. Thus by Theorem 13.2 of \citep{MR1814364}
we have $\|u_{\varepsilon,\delta}\|_{C^{1,\alpha_{2}}(\overline{U}_{\varepsilon})}\le C$,
for some $C,\alpha_{2}>0$ independent of $\delta$.

Now we have 
\[
\|a_{ij,\delta}D_{ij}^{2}u_{\varepsilon,\delta}\|_{L^{\infty}(U_{\varepsilon})}=\|D_{i}(D_{i}F(Du_{\varepsilon,\delta}))\|_{L^{\infty}(U_{\varepsilon})}\le C,
\]
where $a_{ij,\delta}=D_{ij}^{2}F(Du_{\varepsilon,\delta})$. Then
by Theorem 9.13 of \citep{MR1814364} we have 
\[
\|u_{\varepsilon,\delta}\|_{W^{2,p}(U_{\varepsilon})}\le C_{p},
\]
for all $p<\infty$, and some $C_{p}$ independent of $\delta$. Here
we used the fact that $a_{ij,\delta}$'s have a uniform modulus of
continuity independently of $\delta$, due to the uniform boundedness
of the $C^{\alpha_{2}}$ norm of $Du_{\varepsilon,\delta}$. Therefore
there is a sequence $\delta_{i}\to0$ such that $u_{\varepsilon,\delta_{i}}$
weakly converges in $W^{2,p}(U_{\varepsilon})$ to a function $\tilde{u}_{\varepsilon}$.
In addition, we can assume that $u_{\varepsilon,\delta_{i}},Du_{\varepsilon,\delta_{i}}$
uniformly converge to $\tilde{u}_{\varepsilon},D\tilde{u}_{\varepsilon}$,
since $\|u_{\varepsilon,\delta}\|_{C^{1,\alpha_{2}}(\overline{U}_{\varepsilon})}$
is bounded independently of $\delta$.

Finally, we want to show that $\tilde{u}_{\varepsilon}=u_{\varepsilon}$.
Note that by (\ref{eq: 1 in Reg u_e}) we have $\phi_{\varepsilon}\leq\tilde{u}_{\varepsilon}\leq\psi_{\varepsilon}$.
Hence, it suffices to show that $\tilde{u}_{\varepsilon}$ is the
minimizer of $J_{\varepsilon}$ over $W_{\phi_{\varepsilon},\psi_{\varepsilon}}$.
Take $v\in W_{\phi_{\varepsilon},\psi_{\varepsilon}}\subset\phi_{\varepsilon}+H_{0}^{1}(U_{\varepsilon})$.
Then we have 
\[
J_{\varepsilon}[u_{\varepsilon,\delta_{i}}]\le J_{\varepsilon,\delta_{i}}[u_{\varepsilon,\delta_{i}}]\le J_{\varepsilon,\delta_{i}}[v]=J_{\varepsilon}[v].
\]
Note that the extra terms in $J_{\varepsilon,\delta}$ (defined in
(\ref{eq: I_e,d})) vanish for this $v$, since $\phi_{\varepsilon}\le v\le\psi_{\varepsilon}$.
Now sending $i\to\infty$ gives the desired due to the uniform convergence
of $u_{\varepsilon,\delta_{i}},Du_{\varepsilon,\delta_{i}}$ to $\tilde{u}_{\varepsilon},D\tilde{u}_{\varepsilon}$.
\end{proof}
Since $u_{\varepsilon}\in H^{2}(U_{\varepsilon})$, we can integrate
by parts in (\ref{eq: var ineq u_e}), and use appropriate test functions
in place of $v$, to obtain 
\begin{equation}
\begin{cases}
-D_{i}(D_{i}F(Du_{\varepsilon}))+g'(u_{\varepsilon})=0 & \textrm{ if }\phi_{\varepsilon}<u_{\varepsilon}<\psi_{\varepsilon},\\
-D_{i}(D_{i}F(Du_{\varepsilon}))+g'(u_{\varepsilon})\le0 & \textrm{ a.e. if }\phi_{\varepsilon}<u_{\varepsilon}\le\psi_{\varepsilon},\\
-D_{i}(D_{i}F(Du_{\varepsilon}))+g'(u_{\varepsilon})\ge0 & \textrm{ a.e. if }\phi_{\varepsilon}\le u_{\varepsilon}<\psi_{\varepsilon}.
\end{cases}\label{eq: diff ineq u_e}
\end{equation}
Note that $u_{\varepsilon}$ is $C^{2,\bar{\alpha}}$ on the open
set 
\begin{equation}
E_{\varepsilon}:=\{x\in U_{\varepsilon}:\phi_{\varepsilon}(x)<u_{\varepsilon}(x)<\psi_{\varepsilon}(x)\},\label{eq: E_e}
\end{equation}
due to the Schauder estimates (see Theorem 6.13 of \citep{MR1814364}).
\begin{lem}
Suppose the Assumptions \ref{assu: 1},\ref{assu: 3} hold. Then
we have 
\[
Du_{\varepsilon}\in K^{\circ}\qquad\textrm{ in }U_{\varepsilon}.
\]
\end{lem}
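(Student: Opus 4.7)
\medskip
\noindent\textbf{Proof plan.} The goal is equivalent to the $\gamma$-Lipschitz property $u_\varepsilon(x)-u_\varepsilon(y)\le\gamma(x-y)$ for $x,y\in U_\varepsilon$; indeed, once this is established, mollifying $u_\varepsilon$ and applying (\ref{eq: gen Cauchy-Schwartz}) as in the proof of Proposition \ref{prop: equiv} yields $\gamma^\circ(Du_\varepsilon)\le1$ a.e. The plan is to identify $u_\varepsilon$ with the minimizer of $J_\varepsilon$ over the gradient-constrained class $W_{K^\circ,\varphi_\varepsilon}(U_\varepsilon)$, where $\varphi_\varepsilon:=\phi_\varepsilon|_{\partial U_\varepsilon}=\psi_\varepsilon|_{\partial U_\varepsilon}$ (the two smooth obstacles coincide on $\partial U_\varepsilon$ by the construction (\ref{eq: U_e}) and the choice of $\delta_\varepsilon$). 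From $D\phi_\varepsilon,D\psi_\varepsilon\in K^\circ$ established in the previous lemma, together with the argument of Proposition \ref{prop: equiv} (mollify and invoke (\ref{eq: gen Cauchy-Schwartz})), both $\phi_\varepsilon$ and $\psi_\varepsilon$ are $\gamma$-Lipschitz; in particular $\varphi_\varepsilon$ satisfies the Lipschitz property (\ref{eq: phi Lip}) on $\partial U_\varepsilon$.

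\medskip
\noindent Let $\rho_\varepsilon,\bar\rho_\varepsilon$ be defined by (\ref{eq: rho}) for the localized problem on $(U_\varepsilon,\varphi_\varepsilon)$. For any $x\in U_\varepsilon$ and $y\in\partial U_\varepsilon$, the $\gamma$-Lipschitz property of $\psi_\varepsilon$ gives $\psi_\varepsilon(x)\le\psi_\varepsilon(y)+\gamma(x-y)=\varphi_\varepsilon(y)+\gamma(x-y)$; minimizing over $y$ yields $\psi_\varepsilon\le\rho_\varepsilon$ on $U_\varepsilon$, and symmetrically $-\bar\rho_\varepsilon\le\phi_\varepsilon$. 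Consequently
\[
W_{\phi_\varepsilon,\psi_\varepsilon}(U_\varepsilon)\subset W_{\bar\rho_\varepsilon,\rho_\varepsilon}(U_\varepsilon).
\]
Applying Proposition \ref{prop: equiv} to $(U_\varepsilon,\varphi_\varepsilon)$, the minimizer $\tilde u_\varepsilon$ of $J_\varepsilon$ over $W_{\bar\rho_\varepsilon,\rho_\varepsilon}(U_\varepsilon)$ is the same as its minimizer over $W_{K^\circ,\varphi_\varepsilon}(U_\varepsilon)$; in particular $D\tilde u_\varepsilon\in K^\circ$ a.e. From the displayed inclusion and the minimality definitions, $J_\varepsilon[\tilde u_\varepsilon]\le J_\varepsilon[u_\varepsilon]$.

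\medskip
\noindent It remains to show $u_\varepsilon=\tilde u_\varepsilon$, for which I will verify that $\tilde u_\varepsilon\in W_{\phi_\varepsilon,\psi_\varepsilon}(U_\varepsilon)$, i.e., $\phi_\varepsilon\le\tilde u_\varepsilon\le\psi_\varepsilon$ on $U_\varepsilon$; once this holds the minimality of $u_\varepsilon$ over $W_{\phi_\varepsilon,\psi_\varepsilon}$ forces $J_\varepsilon[u_\varepsilon]\le J_\varepsilon[\tilde u_\varepsilon]$, and strict convexity of $J_\varepsilon$ (from that of $F$) yields $u_\varepsilon=\tilde u_\varepsilon$, proving the lemma. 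For the upper bound, the natural competitor is $v:=\min\{\tilde u_\varepsilon,\psi_\varepsilon\}$, which lies in $W_{K^\circ,\varphi_\varepsilon}(U_\varepsilon)$ since the minimum of two $\gamma$-Lipschitz functions with common boundary data $\varphi_\varepsilon$ is again $\gamma$-Lipschitz with those boundary data.

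\medskip
\noindent The hard part will be ruling out the possibility that the contact set $A:=\{\tilde u_\varepsilon>\psi_\varepsilon\}$ has positive measure. Here one uses that $\tilde u_\varepsilon$ and $\psi_\varepsilon$ agree on $\partial A\cup\partial U_\varepsilon$ while differing on $A$: this forces $D\tilde u_\varepsilon\ne D\psi_\varepsilon$ on a subset of $A$ of positive measure (otherwise $\tilde u_\varepsilon-\psi_\varepsilon$ would be locally constant inside $A$, contradicting its vanishing on $\partial A$). Then the strict convexity of $J_\varepsilon$ along the segment $(1-s)\tilde u_\varepsilon+sv$, together with the variational inequality satisfied by $\tilde u_\varepsilon$ as the unique minimizer over $W_{K^\circ,\varphi_\varepsilon}$, forces $v=\tilde u_\varepsilon$ and hence $\tilde u_\varepsilon\le\psi_\varepsilon$. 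The lower bound $\tilde u_\varepsilon\ge\phi_\varepsilon$ follows by the analogous argument with $\max\{\tilde u_\varepsilon,\phi_\varepsilon\}$.
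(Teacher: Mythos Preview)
Your strategy has a genuine gap at the ``hard part''. The competitor argument with $v=\min\{\tilde u_\varepsilon,\psi_\varepsilon\}$ does not yield $\tilde u_\varepsilon\le\psi_\varepsilon$: minimality of $\tilde u_\varepsilon$ over $W_{K^\circ,\varphi_\varepsilon}$ (or the associated variational inequality) gives only $J_\varepsilon[\tilde u_\varepsilon]\le J_\varepsilon[v]$, and strict convexity then forces $J_\varepsilon[\tilde u_\varepsilon]<J_\varepsilon[v]$ whenever $\tilde u_\varepsilon\ne v$. Neither inequality is a contradiction; nothing prevents $\tilde u_\varepsilon$ from exceeding $\psi_\varepsilon$. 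In fact the conclusion $\tilde u_\varepsilon\le\psi_\varepsilon$ is generally \emph{false}: you correctly note $\psi_\varepsilon\le\rho_\varepsilon$, but this inequality is typically strict in the interior (for instance near the ridge of $\rho$, where mollification strictly lowers the value), and when the data drives the gradient-constrained minimizer up to the top obstacle $\rho_\varepsilon$ one gets $\tilde u_\varepsilon=\rho_\varepsilon>\psi_\varepsilon$ there. So $u_\varepsilon$ and $\tilde u_\varepsilon$ need not coincide, and the identification scheme cannot close.

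The paper argues directly, exploiting the $C^1$ regularity of $u_\varepsilon$ already established. On $\partial U_\varepsilon$ the tangential derivatives of $u_\varepsilon$ agree with those of $\phi_\varepsilon=\psi_\varepsilon$, while the normal derivative is squeezed between $D_\nu\phi_\varepsilon$ and $D_\nu\psi_\varepsilon$; together with (\ref{eq: gen Cauchy-Schwartz 2}) this gives $\gamma^\circ(Du_\varepsilon)\le1$ on $\partial U_\varepsilon$. On each coincidence set $\{u_\varepsilon=\psi_\varepsilon\}$ or $\{u_\varepsilon=\phi_\varepsilon\}$ one has $Du_\varepsilon=D\psi_\varepsilon$ or $D\phi_\varepsilon$ (an interior minimum of a $C^1$ nonnegative function), hence again $\gamma^\circ(Du_\varepsilon)\le1$. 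Finally, on the open noncoincidence set $E_\varepsilon$, for each $\xi$ with $\gamma(\xi)=1$ the scalar $D_\xi u_\varepsilon$ satisfies the linear elliptic equation $-D_i(D_{ij}^2F(Du_\varepsilon)D_jD_\xi u_\varepsilon)+g''(u_\varepsilon)D_\xi u_\varepsilon=0$, and the strong maximum principle forbids an interior maximum above $1$; taking the supremum over such $\xi$ and invoking (\ref{eq: gen Cauchy-Schwartz 2}) finishes the proof.
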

\begin{proof}
First note that $Du_{\varepsilon}$ is continuous on $\overline{U}_{\varepsilon}$.
Now since $u_{\varepsilon}=\phi_{\varepsilon}=\psi_{\varepsilon}$
on $\partial U_{\varepsilon}$, we have $D_{\xi}u_{\varepsilon}=D_{\xi}\phi_{\varepsilon}=D_{\xi}\psi_{\varepsilon}$
for any direction $\xi$ tangent to $\partial U_{\varepsilon}$. Also
as $\phi_{\varepsilon}\leq u_{\varepsilon}\leq\psi_{\varepsilon}$
in $U_{\varepsilon}$, we have $D_{\nu}\phi_{\varepsilon}\leq D_{\nu}u_{\varepsilon}\leq D_{\nu}\psi_{\varepsilon}$
on $\partial U_{\varepsilon}$, where $\nu$ is the inward normal
to $\partial U_{\varepsilon}$. Hence by (\ref{eq: gen Cauchy-Schwartz 2}),
and the fact that $D\phi_{\varepsilon},D\psi_{\varepsilon}\in K^{\circ}$,
we get 
\[
\gamma^{\circ}(Du_{\varepsilon})\le1\qquad\textrm{ on }\partial U_{\varepsilon}.
\]
This bound also holds on the sets $\{u_{\varepsilon}=\psi_{\varepsilon}\}$
and $\{u_{\varepsilon}=\phi_{\varepsilon}\}$, as either $\psi_{\varepsilon}-u_{\varepsilon}$
or $u_{\varepsilon}-\phi_{\varepsilon}$ attains its minimum there,
so $Du_{\varepsilon}$ equals $D\psi_{\varepsilon}$ or $D\phi_{\varepsilon}$
over them. 

To obtain the bound on the open set $E_{\varepsilon}$, note that
for any vector $\xi$ with $\gamma(\xi)=1$, $D_{\xi}u_{\varepsilon}$
is a weak solution to the elliptic equation 
\[
-D_{i}(a_{ij}D_{j}D_{\xi}u_{\varepsilon})+bD_{\xi}u_{\varepsilon}=0\qquad\textrm{ in }E_{\varepsilon},
\]
where $a_{ij}:=D_{ij}^{2}F(Du_{\varepsilon})$, and $b:=g''(u_{\varepsilon})$.
Now suppose that $D_{\xi}u_{\varepsilon}$ attains its maximum at
$x_{0}\in E_{\varepsilon}$ with $D_{\xi}u_{\varepsilon}(x_{0})>1$.
Then the strong maximum principle (Theorem 8.19 of \citep{MR1814364})
implies that $D_{\xi}u_{\varepsilon}$ is constant over $E_{\varepsilon}$.
This contradicts the fact that $D_{\xi}u_{\varepsilon}\le1$ on $\partial E_{\varepsilon}$.
Thus we must have $D_{\xi}u_{\varepsilon}\le1$ on $E_{\varepsilon}$;
and as $\xi$ is arbitrary, we get the desired bound using (\ref{eq: gen Cauchy-Schwartz 2}).
\end{proof}
\begin{lem}
Suppose the Assumptions \ref{assu: 1},\ref{assu: 3} hold. Then for
small enough $\varepsilon$ we have 
\begin{align}
 & |D_{i}(D_{i}F(Du_{\varepsilon}))|\le C_{4}+\frac{nc_{9}C_{3}}{d-\varepsilon} &  & \hspace{-1cm}\textrm{a.e. on }U_{\varepsilon},\nonumber \\
 & |D^{2}\psi_{\varepsilon}|\le\frac{1}{c_{8}}\Big(C_{4}+\frac{nc_{9}C_{3}}{d-\varepsilon}\Big) &  & \hspace{-1cm}\textrm{a.e. on }\{u_{\varepsilon}=\psi_{\varepsilon}\},\label{eq: bd D2 u_e}\\
 & |D^{2}\phi_{\varepsilon}|\le\frac{1}{c_{8}}\Big(C_{4}+\frac{nc_{9}C_{3}}{d-\varepsilon}\Big) &  & \hspace{-1cm}\textrm{a.e. on }\{u_{\varepsilon}=\phi_{\varepsilon}\},\nonumber 
\end{align}
where $d$ is the Euclidean distance to $\partial U$, and $C_{4}:=c_{5}(\max_{\overline{U}}\{|\rho|,|\bar{\rho}|\}+5C_{1}+1)$.
\end{lem}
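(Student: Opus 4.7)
The plan is to decompose $U_{\varepsilon}$ into the elastic region $E_{\varepsilon}$ and the two coincidence sets $\{u_{\varepsilon}=\psi_{\varepsilon}\}$ and $\{u_{\varepsilon}=\phi_{\varepsilon}\}$, and to treat each piece using the differential inequalities (\ref{eq: diff ineq u_e}) together with the obstacle bounds (\ref{eq: bd D2 phi}). First I would obtain a uniform bound on $|u_{\varepsilon}|$: from $\phi_{\varepsilon}\le u_{\varepsilon}\le\psi_{\varepsilon}$ and the $O(\varepsilon)$ closeness $|\psi_{\varepsilon}-\rho|\le C_{1}\varepsilon$, $|\phi_{\varepsilon}-(-\bar{\rho})|\le 5C_{1}\varepsilon$ coming from the mollification construction, one gets $|u_{\varepsilon}|\le\max_{\overline{U}}\{|\rho|,|\bar{\rho}|\}+5C_{1}$ for $\varepsilon<1$. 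Then (\ref{eq: Bnds}) yields $|g'(u_{\varepsilon})|\le c_{5}(|u_{\varepsilon}|+1)\le C_{4}$ throughout $U_{\varepsilon}$. On the open set $E_{\varepsilon}$ the equation in (\ref{eq: diff ineq u_e}) immediately gives $|D_{i}(D_{i}F(Du_{\varepsilon}))|=|g'(u_{\varepsilon})|\le C_{4}$, which is the claimed bound there.

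The main step is the analysis on $\{u_{\varepsilon}=\psi_{\varepsilon}\}$; the case $\{u_{\varepsilon}=\phi_{\varepsilon}\}$ is symmetric. The key technical input is that on the coincidence set the derivatives of $u_{\varepsilon}$ and $\psi_{\varepsilon}$ agree a.e., namely $Du_{\varepsilon}=D\psi_{\varepsilon}$ and $D^{2}u_{\varepsilon}=D^{2}\psi_{\varepsilon}$ a.e.\ (a Sobolev analog of ``agreement of second derivatives at contact points,'' which is cited earlier in the paper via Theorem 4.4 of \citep{MR3409135}). With this identification, $D_{i}(D_{i}F(Du_{\varepsilon}))=\mathrm{tr}[D^{2}F(D\psi_{\varepsilon})\,D^{2}\psi_{\varepsilon}]$ a.e.\ on the set. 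I would then pass to an orthonormal eigenbasis $\xi_{1},\dots,\xi_{n}$ of $D^{2}\psi_{\varepsilon}$ with eigenvalues $\lambda_{1},\dots,\lambda_{n}$, so that this trace equals $\sum_{k}\lambda_{k}D_{\xi_{k}\xi_{k}}^{2}F(D\psi_{\varepsilon})$. By (\ref{eq: bd D2 phi}) every $\lambda_{k}\le C_{3}/(d-\varepsilon)$, and by (\ref{eq: Bnds}) every $D_{\xi_{k}\xi_{k}}^{2}F\in[c_{8},c_{9}]$ (so in particular $c_{9}\ge c_{8}$); discarding the nonpositive terms bounds the trace from above by $nc_{9}C_{3}/(d-\varepsilon)$, while $D_{i}(D_{i}F(Du_{\varepsilon}))\ge g'(u_{\varepsilon})\ge -C_{4}$ supplies the matching lower bound. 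This gives the first estimate on $\{u_{\varepsilon}=\psi_{\varepsilon}\}$.

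To control $|D^{2}\psi_{\varepsilon}|$ I would bound each eigenvalue individually. For nonnegative $\lambda_{k}$, (\ref{eq: bd D2 phi}) already yields $\lambda_{k}\le C_{3}/(d-\varepsilon)\le c_{8}^{-1}c_{9}C_{3}/(d-\varepsilon)$. For negative $\lambda_{k}$, the inequality $\lambda_{k}D_{\xi_{k}\xi_{k}}^{2}F\le\lambda_{k}c_{8}$ (negative number times smaller positive is larger) isolates the $k$-th term in the trace:
\[
\lambda_{k}c_{8}\;\ge\;-C_{4}-\sum_{l\ne k}\lambda_{l}D_{\xi_{l}\xi_{l}}^{2}F\;\ge\;-C_{4}-(n-1)\frac{c_{9}C_{3}}{d-\varepsilon},
\]
where for each $l\ne k$ I again bound $\lambda_{l}D_{\xi_{l}\xi_{l}}^{2}F\le c_{9}C_{3}/(d-\varepsilon)$ (or by $0$ if $\lambda_{l}<0$). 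This produces $|\lambda_{k}|\le c_{8}^{-1}\big(C_{4}+(n-1)c_{9}C_{3}/(d-\varepsilon)\big)$, which is dominated by the claimed bound. The corresponding statement on $\{u_{\varepsilon}=\phi_{\varepsilon}\}$ follows by reversing signs throughout, using the lower bound on $D_{\xi\xi}^{2}\phi_{\varepsilon}$ from (\ref{eq: bd D2 phi}) and the matching half of (\ref{eq: diff ineq u_e}).

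The only real obstacle is the coincidence-set identification $D^{2}u_{\varepsilon}=D^{2}\psi_{\varepsilon}$ a.e.\ on $\{u_{\varepsilon}=\psi_{\varepsilon}\}$: without it one cannot evaluate $D_{i}(D_{i}F(Du_{\varepsilon}))$ on the plastic set using only the known smoothness of the obstacle. Everything else is linear-algebra bookkeeping of eigenvalues against (\ref{eq: bd D2 phi}) and (\ref{eq: Bnds}).
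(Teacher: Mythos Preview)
Your proposal is correct and follows essentially the same approach as the paper: decompose $U_{\varepsilon}$ into $E_{\varepsilon}$ and the two coincidence sets, use the differential inequalities (\ref{eq: diff ineq u_e}), identify $D^{2}u_{\varepsilon}$ with $D^{2}\psi_{\varepsilon}$ (resp.\ $D^{2}\phi_{\varepsilon}$) a.e.\ on the contact set via Theorem~4.4 of \citep{MR3409135}, diagonalize $D^{2}\psi_{\varepsilon}$, and combine the one-sided eigenvalue bound (\ref{eq: bd D2 phi}) with the ellipticity constants $c_{8},c_{9}$. The only cosmetic difference is that you split into the cases $\lambda_{k}\ge 0$ and $\lambda_{k}<0$ when bounding $|\lambda_{k}|$, whereas the paper divides the inequality $D_{\xi_{j}\xi_{j}}^{2}F\cdot\lambda_{j}\ge -C_{4}-(n-1)c_{9}C_{3}/(d-\varepsilon)$ directly by $D_{\xi_{j}\xi_{j}}^{2}F\ge c_{8}$; both routes yield the same bound.
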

\begin{rem*}
Note that for a function $f$ 
\[
|D^{2}f|=\max_{|\xi|=1}|D_{\xi\xi}^{2}f|=\max\{|\lambda_{i}|:\lambda_{i}\textrm{ is an eigenvalue of }D^{2}f\}.
\]
\end{rem*}
\begin{proof}
On the open set $E_{\varepsilon}\subset U_{\varepsilon}$ we have
\begin{align*}
|D_{i}(D_{i}F(Du_{\varepsilon}))| & =|g'(u_{\varepsilon})|\le c_{5}(|u_{\varepsilon}|+1)\\
 & \le c_{5}(\max_{\overline{U}}\{|\phi_{\varepsilon}|,|\psi_{\varepsilon}|\}+1)\le c_{5}(\max_{\overline{U}}\{|\rho|,|\bar{\rho}|\}+5C_{1}\varepsilon+1)\le C_{4}.
\end{align*}
Next consider the closed subset of $U_{\varepsilon}$ over which
$u_{\varepsilon}=\psi_{\varepsilon}$. By (\ref{eq: diff ineq u_e})
we have 
\[
D_{i}(D_{i}F(Du_{\varepsilon}))\ge g'(u_{\varepsilon})\ge-C_{4}\qquad\textrm{ a.e. on }\{u_{\varepsilon}=\psi_{\varepsilon}\}.
\]
Since both $u_{\varepsilon},\psi_{\varepsilon}$ are twice weakly
differentiable, we have (see Theorem 4.4 of \citep{MR3409135}) 
\[
D_{i}(D_{i}F(Du_{\varepsilon}))=D_{i}(D_{i}F(D\psi_{\varepsilon}))\qquad\textrm{ a.e. on }\{u_{\varepsilon}=\psi_{\varepsilon}\}.
\]
But we have 
\begin{align*}
D_{i}(D_{i}F(D\psi_{\varepsilon}))=D_{ij}^{2}F(D\psi_{\varepsilon})D_{ij}^{2}\psi_{\varepsilon} & =\mathrm{tr}[D^{2}F(D\psi_{\varepsilon})D^{2}\psi_{\varepsilon}]\\
 & =\underset{i\le n}{\sum}D_{\xi_{i}\xi_{i}}^{2}F(D\psi_{\varepsilon})D_{\xi_{i}\xi_{i}}^{2}\psi_{\varepsilon},
\end{align*}
where $\xi_{1},\cdots,\xi_{n}$ is an orthonormal basis of eigenvectors
of $D^{2}\psi_{\varepsilon}$. Thus, by using (\ref{eq: Bnds}),
(\ref{eq: bd D2 phi}) we get 
\[
D_{i}(D_{i}F(Du_{\varepsilon}))\le\underset{i\le n}{\sum}D_{\xi_{i}\xi_{i}}^{2}F(D\psi_{\varepsilon})\frac{C_{3}}{d(x)-\varepsilon}\le\frac{nc_{9}C_{3}}{d(x)-\varepsilon}\qquad\textrm{ a.e. on }\{u_{\varepsilon}=\psi_{\varepsilon}\}.
\]
We have similar bounds on the set $\{u_{\varepsilon}=\phi_{\varepsilon}\}$.
These bounds easily give the first inequality of (\ref{eq: bd D2 u_e}).

On the other hand for a.e. $x\in\{u_{\varepsilon}=\psi_{\varepsilon}\}$
we have 
\[
\underset{i\le n}{\sum}D_{\xi_{i}\xi_{i}}^{2}F(D\psi_{\varepsilon})D_{\xi_{i}\xi_{i}}^{2}\psi_{\varepsilon}=D_{i}(D_{i}F(D\psi_{\varepsilon}))=D_{i}(D_{i}F(Du_{\varepsilon}))\ge g'(u_{\varepsilon})\ge-C_{4}.
\]
Thus again by using (\ref{eq: Bnds}), (\ref{eq: bd D2 phi}) we get
\[
D_{\xi_{j}\xi_{j}}^{2}F(D\psi_{\varepsilon})D_{\xi_{j}\xi_{j}}^{2}\psi_{\varepsilon}\ge-C_{4}-\underset{i\ne j}{\sum}D_{\xi_{i}\xi_{i}}^{2}F(D\psi_{\varepsilon})D_{\xi_{i}\xi_{i}}^{2}\psi_{\varepsilon}\ge-C_{4}-\frac{(n-1)c_{9}C_{3}}{d(x)-\varepsilon}.
\]
Hence 
\[
D_{\xi_{j}\xi_{j}}^{2}\psi_{\varepsilon}\ge-\frac{C_{4}}{c_{8}}-\frac{(n-1)\frac{c_{9}}{c_{8}}C_{3}}{d(x)-\varepsilon}.
\]
The reverse inequality is given by (\ref{eq: bd D2 phi}) (Keep in
mind that $\frac{c_{9}}{c_{8}}\ge1$). Note that the numbers $D_{\xi_{j}\xi_{j}}^{2}\psi_{\varepsilon}$
are the eigenvalues of $D^{2}\psi_{\varepsilon}$, so we get the desired
bound. The case of $D^{2}\phi_{\varepsilon}$ is similar.
\end{proof}
\begin{thm}
\label{thm: Reg u}Suppose the Assumptions \ref{assu: 1},\ref{assu: 3}
hold. Then we have 
\[
u\in W_{\mathrm{loc}}^{2,\infty}(U)=C_{\mathrm{loc}}^{1,1}(U).
\]
\end{thm}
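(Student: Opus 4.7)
The plan is to pass to the limit $\varepsilon \to 0$ in the family $u_\varepsilon$ already constructed in this appendix, using the uniform bounds (\ref{eq: bd D2 u_e}) together with the uniform $W^{1,\infty}$ bound that comes from $\phi_\varepsilon \le u_\varepsilon \le \psi_\varepsilon$ and $Du_\varepsilon \in K^\circ$.

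First I would derive uniform $W^{2,p}_{\mathrm{loc}}(U)$ estimates. Fix a compact $V \subset\subset U$; by (\ref{eq: U=00003DU U_e}) and (\ref{eq: U_e subst U_e'}) there is $\varepsilon_0>0$ such that $V \subset U_\varepsilon$ at a fixed positive Euclidean distance from $\partial U_\varepsilon$ for all $\varepsilon<\varepsilon_0$, so the right-hand side of (\ref{eq: bd D2 u_e}) is dominated by a constant $C_V$ independent of $\varepsilon$. Writing the equation in nondivergence form $a^\varepsilon_{ij} D^2_{ij} u_\varepsilon = D_i(D_iF(Du_\varepsilon))$ with $a^\varepsilon_{ij} := D^2_{ij}F(Du_\varepsilon)$, the $C^{1,\alpha_0}$-bound of \citep{MR749677} (applied on each fixed $U_{\varepsilon_1}$ with $\varepsilon_1 < \varepsilon_0$) yields a uniform modulus of continuity for $a^\varepsilon_{ij}$. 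Theorem 9.11 of \citep{MR1814364} then gives $\|u_\varepsilon\|_{W^{2,p}(V)} \le C_{V,p}$ for every $p<\infty$, uniformly in small $\varepsilon$. Hence a subsequence of $u_\varepsilon$ converges weakly in $W^{2,p}_{\mathrm{loc}}(U)$ and strongly in $C^1_{\mathrm{loc}}(U)$ to some $\tilde u$.

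Next I would identify $\tilde u$ with $u$. Because $\phi_\varepsilon \to -\bar\rho$ and $\psi_\varepsilon \to \rho$ uniformly on $\overline U$, while $u_\varepsilon = \phi_\varepsilon = \psi_\varepsilon$ on $\partial U_\varepsilon$ and $U_\varepsilon \nearrow U$, the limit satisfies $-\bar\rho \le \tilde u \le \rho$ and $D\tilde u \in K^\circ$ a.e., with $\tilde u = \varphi$ on $\partial U$; thus $\tilde u \in W_{\bar\rho,\rho}(U)$. For minimality, given any competitor $v \in W_{\bar\rho,\rho}(U)$, set $v_\varepsilon := \max\{\phi_\varepsilon, \min\{\psi_\varepsilon, v\}\}$ on $U_\varepsilon$; this lies in $W_{\phi_\varepsilon,\psi_\varepsilon}$ and converges to $v$ in $H^1_{\mathrm{loc}}(U)$, so the inequalities $J_\varepsilon[u_\varepsilon] \le J_\varepsilon[v_\varepsilon]$ pass to the limit to give $J[\tilde u; U] \le J[v; U]$. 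By Proposition \ref{prop: equiv} and uniqueness of the minimizer, $\tilde u = u$, so $u \in W^{2,p}_{\mathrm{loc}}(U)$ for every $p<\infty$.

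The last and principal step is to upgrade this to $W^{2,\infty}_{\mathrm{loc}}$, and this is the main obstacle of the proof. Fix $x_0 \in U$ and $r>0$ with $\overline{B_r(x_0)} \subset U$ and $r+\varepsilon < \mathrm{dist}(x_0,\partial U)/2$ for $\varepsilon$ small, and rescale $v_\varepsilon(y) := u_\varepsilon(x_0 + ry)$ for $y \in B_1(0)$. On the elastic set $\Omega_\varepsilon \subset B_1$ where $\phi_\varepsilon(x_0+r\cdot) < v_\varepsilon < \psi_\varepsilon(x_0+r\cdot)$, $v_\varepsilon$ satisfies
\[
D^2_{ij}F\big(\tfrac{1}{r}Dv_\varepsilon\big)\,D^2_{ij} v_\varepsilon = r^2 g'(v_\varepsilon),
\]
whose coefficients and right-hand side have $C^{\alpha_0}$-norms bounded independently of $\varepsilon$ (by the uniform $C^{1,\alpha_0}$-bound mentioned above); while on $B_1 \setminus \Omega_\varepsilon$ we have $D^2 v_\varepsilon = r^2 D^2 u_\varepsilon = r^2 D^2\psi_\varepsilon$ or $r^2 D^2\phi_\varepsilon$, uniformly bounded in $\varepsilon$ by (\ref{eq: bd D2 u_e}). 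These are exactly the hypotheses of the double-obstacle $W^{2,\infty}$ regularity result of \citet{MR3198649}, extended to our nonlinear setting by \citet{Indrei-Minne}; applying it on $B_{1/2}(0)$ gives $\|D^2 v_\varepsilon\|_{L^\infty(B_{1/2})} \le C$ uniform in $\varepsilon$, hence $\|D^2 u_\varepsilon\|_{L^\infty(B_{r/2}(x_0))} \le C'$. Passing to the weak-$\ast$ $W^{2,\infty}$ limit on $B_{r/2}(x_0)$, together with the already identified limit $u_\varepsilon \to u$, yields $u \in W^{2,\infty}(B_{r/2}(x_0))$. Since $x_0 \in U$ was arbitrary, $u \in W^{2,\infty}_{\mathrm{loc}}(U)$, which by Rademacher's theorem equals $C^{1,1}_{\mathrm{loc}}(U)$. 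The most delicate point is verifying the hypotheses of the Indrei–Minne theorem uniformly in $\varepsilon$; this reduces to the uniform Hölder regularity of the coefficients (ensured by $C^{1,\alpha_0}$ compactness of $u_\varepsilon$) and the $\varepsilon$-uniform bound on $D^2\phi_\varepsilon, D^2\psi_\varepsilon$ on contact sets, both already in hand.
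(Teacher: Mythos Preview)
Your proposal is correct and follows essentially the same route as the paper: pass to the limit in the approximations $u_\varepsilon$ using the uniform bound (\ref{eq: bd D2 u_e}), obtain $u\in W^{2,p}_{\mathrm{loc}}$, identify the limit with the minimizer, and then invoke the Indrei--Minne result after rescaling to upgrade to $W^{2,\infty}_{\mathrm{loc}}$. The only differences are in two technical details: the paper obtains the uniform interior $C^{\alpha_0}$ modulus for $Du_\varepsilon$ by applying Theorem~8.24 of \citep{MR1814364} (De Giorgi--Nash--Moser) to $Du_\varepsilon$ as a solution of a linear divergence-form equation, rather than invoking \citep{MR749677} (which is a global boundary-regularity result and does not directly give uniform interior bounds here); and for the identification step the paper works through the variational inequality (\ref{eq: var ineq u_e}) with test functions that equal $\rho$ near $\partial U$ and are then mollified, which sidesteps the (true but slightly delicate) verification that your truncations $v_\varepsilon=\max\{\phi_\varepsilon,\min\{\psi_\varepsilon,v\}\}$ converge to $v$ in $H^1$ on the coincidence sets.
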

\begin{proof}
We choose a decreasing sequence $\varepsilon_{k}\to0$ such that $\overline{U}_{\varepsilon_{k}}\subset U_{\varepsilon_{k+1}}$
(this is possible by (\ref{eq: U_e subst U_e'})). For convenience
we use $U_{k},u_{k},\phi_{k},\psi_{k}$ instead of $U_{\varepsilon_{k}},u_{\varepsilon_{k}},\phi_{\varepsilon_{k}},\psi_{\varepsilon_{k}}$.
Consider the sequence $u_{k}|_{U_{3}}$ for $k>3$. By (\ref{eq: bd D2 u_e}),
(\ref{eq: inclusions}) we have 
\[
\|D_{i}(D_{i}F(Du_{k}))\|_{L^{\infty}(U_{3})}\le C,
\]
for some $C$ independent of $k$. Let $g_{k}:=D_{i}(D_{i}F(Du_{k}))$.
Then $Du_{k}$ is a weak solution to the elliptic equation 
\[
-D_{i}(a_{ij,k}D_{j}Du_{k})+Dg_{k}=0,
\]
where $a_{ij,k}:=D_{ij}^{2}F(Du_{k})$. Thus by Theorem 8.24 of \citep{MR1814364}
we have 
\[
\|Du_{k}\|_{C^{\alpha_{0}}(\overline{U}_{2})}\le C,
\]
for some $C,\alpha_{0}>0$ independent of $k$. Here we used the fact
that $Du_{k},g_{k},a_{ij,k}$ are uniformly bounded independently
of $k$ (Remember that $Du_{k}\in K^{\circ}$).

Now we have 
\[
\|a_{ij,k}D_{ij}^{2}u_{k}\|_{L^{\infty}(U_{2})}=\|D_{i}(D_{i}F(Du_{k}))\|_{L^{\infty}(U_{2})}\le C.
\]
Then by Theorem 9.11 of \citep{MR1814364} we have 
\begin{equation}
\|u_{k}\|_{W^{2,p}(U_{1})}\le C_{p},\label{eq: u_k in W2,p}
\end{equation}
for all $p<\infty$, and some $C_{p}$ independent of $k$. Here we
used the fact that $a_{ij,k}$'s have a uniform modulus of continuity
independently of $k$, due to the uniform boundedness of the $C^{\alpha_{0}}$
norm of $Du_{k}$. Consequently, as $\partial U_{1}$ is smooth, for
every $\tilde{\alpha}<1$, $\|u_{k}\|_{C^{1,\tilde{\alpha}}(\overline{U}_{1})}$
is bounded independently of $k$.

Therefore there is a subsequence of $u_{k}$'s, which we denote by
$u_{k_{1}}$, that weakly converges in $W^{2,p}(U_{1})$ to a function
$\tilde{u}_{1}$. In addition, we can assume that $u_{k_{1}},Du_{k_{1}}$
uniformly converge to $\tilde{u}_{1},D\tilde{u}_{1}$. Now we can
repeat this process with $u_{k_{1}}|_{U_{4}}$ and get a function
$\tilde{u}_{2}$ in $W^{2,p}(U_{2})$, which agrees with $\tilde{u}_{1}$
on $U_{1}$. Continuing this way with subsequences $u_{k_{l}}$ for
each positive integer $l$, we can finally construct a $C^{1}$ function
$\tilde{u}$ in $W_{\textrm{loc}}^{2,p}(U)$ (note that $U=\bigcup U_{k}$
by (\ref{eq: U=00003DU U_e})). It is obvious that $D\tilde{u}\in K^{\circ}$
and $-\bar{\rho}\le\tilde{u}\le\rho$, since $Du_{k}\in K^{\circ}$
and $\phi_{k}\le u_{k}\le\psi_{k}$ for every $k$. In particular
we have $\tilde{u}\in W_{K^{\circ},\varphi}\subset W_{\bar{\rho},\rho}$.

Now we want to show that $u=\tilde{u}$. Due to the uniqueness of
the minimizer, it is enough to show that $\tilde{u}$ is the minimizer
of $J$ over $W_{K^{\circ},\varphi}(U)$. As it is well known, it
suffices to show that (see, for example, the proof of Theorem 3.37
in \citep{MR2361288}) 
\begin{equation}
\int_{U}D_{i}F(D\tilde{u})D_{i}(v-\tilde{u})+g'(\tilde{u})(v-\tilde{u})\,dx\ge0,\label{eq: 1 in Reg u}
\end{equation}
for every $v\in W_{K^{\circ},\varphi}(U)$. Note that $v$ is Lipschitz
continuous. First suppose that $v>-\bar{\rho}$ on $U$, and $v=\rho$
on $\{x\in U:\rho(x)+\bar{\rho}(x)\le\delta\}$ for some $\delta>0$.
Let $v_{k}:=\eta_{\varepsilon_{k}}\ast v$ be the mollification of
$v$. Then for large enough $k$ we have $\phi_{k}\le v_{k}\le\psi_{k}$
on $U_{k}$. Because for large enough $k$ we have $\phi_{k}\le v_{k}=\psi_{k}$
on $\overline{U}_{k}\cap\{\rho+\bar{\rho}\le\delta/2\}$. (Note that
for $x\in U\cap\{\rho+\bar{\rho}\le\delta/2\}$ we have $B_{\varepsilon_{k}}(x)\subset U\cap\{\rho+\bar{\rho}\le\delta\}$
for large enough $k$, due to the Lipschitz continuity of $\rho,\bar{\rho}$.)
On the other hand, $v-(-\bar{\rho})$ has a positive minimum on the
compact set $\{\rho+\bar{\rho}\ge\delta/4\}\cap U$, so by (\ref{eq: psi, phi})
we have $v_{k}-\phi_{k}=(v-(-\bar{\rho}))\ast\eta_{\varepsilon_{k}}-\delta_{\varepsilon_{k}}\ge0$
on $\{\rho+\bar{\rho}\ge\delta/2\}\cap U$, for large enough $k$.
Hence by (\ref{eq: var ineq u_e}) we must have 
\[
\int_{U_{k}}D_{i}F(Du_{k})D_{i}(v_{k}-u_{k})+g'(u_{k})(v_{k}-u_{k})\,dx\ge0.
\]
By taking the limit through the diagonal sequence $u_{l_{l}}$, and
using the Dominated Convergence Theorem, we get (\ref{eq: 1 in Reg u})
for this special $v$.

It is easy to see that an arbitrary test function $v$ in $W_{K^{\circ},\varphi}$
can be approximated by such special test functions. Just consider
the functions $v_{\delta}:=\min\{v+\delta,\rho\}$. Then we have $v_{\delta}>-\bar{\rho}$
on $U$, since $v\ge-\bar{\rho}$, and $\rho>-\bar{\rho}$ on $U$.
Also on $\{\rho+\bar{\rho}\le\delta\}\cap U$ we have $\rho\le-\bar{\rho}+\delta\le v+\delta$,
so $v_{\delta}=\rho$ there. It is also easy to see that $v_{\delta}\in W_{K^{\circ},\varphi}$,
and $v_{\delta}\to v$ in $H^{1}(U)$. Therefore we get (\ref{eq: 1 in Reg u})
for all $v\in W_{K^{\circ},\varphi}$, as desired.

It remains to show that $u$ belongs to $W_{\textrm{loc}}^{2,\infty}(U)$.
First note that $D^{2}u_{k}=D^{2}\phi_{k}$ a.e. on $\{u_{k}=\phi_{k}\}$,
hence by (\ref{eq: bd D2 u_e}) $D^{2}u_{k}$ is bounded there independently
of $k$. Similarly, $D^{2}u_{k}$ is bounded on $\{u_{k}=\psi_{k}\}$
independently of $k$. Now take $x_{0}\in U$ and suppose that $B_{r}(x_{0})\subset U$.
Let $l$ be large enough so that $B_{r}(x_{0})\subset U_{l}$. Set
$v_{k}(y):=u_{k}(x_{0}+ry)$ for $y\in B_{1}(0)$, and $k\ge l$.
Then by (\ref{eq: diff ineq u_e}) and the above argument we have
\[
\begin{cases}
D_{ij}^{2}F(\frac{1}{r}Dv_{k})D_{ij}^{2}v_{k}=r^{2}g'(v_{k}) & \textrm{ a.e. in }B_{1}(0)\cap\Omega_{k},\\
|D^{2}v_{k}|\le C & \textrm{ a.e. in }B_{1}(0)-\Omega_{k},
\end{cases}
\]
for some $C$ independent of $k$. Here $\Omega_{k}:=\{y\in B_{1}(0):u_{k}(x_{0}+ry)\in E_{\varepsilon_{k}}\}$.

Now recall that $\|u_{k}\|_{W^{2,n}(B_{r}(x_{0}))}$, $\|g'(u_{k})\|_{L^{\infty}(B_{r}(x_{0}))}$
are bounded independently of $k$, due to (\ref{eq: u_k in W2,p}),
and the fact that $\phi_{k}\le u_{k}\le\psi_{k}$ and $\phi_{k},\psi_{k}$
are bounded independently of $k$. Therefore $\|v_{k}\|_{W^{2,n}(B_{1}(0))}$
and $\|g'(v_{k})\|_{L^{\infty}(B_{1}(0))}$ are bounded independently
of $k$ too. Also note that the Holder norms of $D_{ij}^{2}F(\frac{1}{r}Dv_{k}),r^{2}g'(v_{k})$
are bounded independently of $k$, since for every $\tilde{\alpha}<1$,
$\|u_{k}\|_{C^{1,\tilde{\alpha}}(\overline{U}_{l})}$ is bounded independently
of $k$. Thus we can apply the result of \citep{Indrei-Minne} to
deduce that 
\[
|D^{2}v_{k}|\le\bar{C}\qquad\textrm{ a.e. in }B_{\frac{1}{2}}(0),
\]
for some $\bar{C}$ independent of $k$. Therefore 
\[
|D^{2}u_{k}|\le C\qquad\textrm{ a.e. in }B_{\frac{r}{2}}(x_{0}),
\]
for some $C$ independent of $k$. Hence, $u_{k}$ is a bounded sequence
in $W^{2,\infty}(B_{\frac{r}{2}}(x_{0}))$. Consider the diagonal
subsequence $u_{l_{l}}$. Then a subsequence of it converges weakly
star in $W^{2,\infty}(B_{\frac{r}{2}}(x_{0}))$. But the limit must
be $u$; so we get $u\in W^{2,\infty}(B_{\frac{r}{2}}(x_{0}))$, as
desired.
\end{proof}

\section{\label{sec: Ridge, Elast, Plast}The ridge, and the elastic and plastic
regions}

Remember that we say $y\in\partial U$ is a $\rho$-closest point
to $x$ if $\rho(x)=\gamma(x-y)+\varphi(y)$. Similarly, we say $y\in\partial U$
is a $\bar{\rho}$-closest point to $x$ if $\bar{\rho}(x)=\gamma(y-x)-\varphi(y)$.
Note that an obvious consequence of the strict Lipschitz property
(\ref{eq: phi strct Lip}) for $\varphi$ is that every $y\in\partial U$
is the unique $\rho$-closest point and $\bar{\rho}$-closest point
on $\partial U$ to itself.
\begin{lem}
\label{lem: segment to the closest pt}Suppose $y$ is one of the
$\rho$-closest points on $\partial U$ to $x\in U$. Then 
\begin{enumerate}
\item[\upshape{(a)}] $y$ is a $\rho$-closest point on $\partial U$ to every point of
$]x,y[$. Therefore $\rho$ varies linearly along the line segment
$[x,y]$.
\item[\upshape{(b)}] If in addition the strict Lipschitz property (\ref{eq: phi strct Lip})
for $\varphi$ holds, then we have $]x,y[\subset U$.
\item[\upshape{(c)}] If in addition $\gamma$ is strictly convex, and the strict Lipschitz
property (\ref{eq: phi strct Lip}) for $\varphi$ holds, then $y$
is the unique $\rho$-closest point on $\partial U$ to the points
of $]x,y[$.
\end{enumerate}
\end{lem}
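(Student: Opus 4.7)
The plan is to prove the three parts in sequence, using only the triangle inequality, positive homogeneity of $\gamma$, and the defining minimality of $\rho$, with strict convexity of $\gamma$ and the strict Lipschitz property (\ref{eq: phi strct Lip}) entering only where needed.

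For (a), I would parametrize $z = (1-t)x+ty$ with $t \in (0,1)$, so that $x-z = t(x-y)$ and $z-y = (1-t)(x-y)$. By positive homogeneity we get the key identity $\gamma(x-y) = \gamma(x-z) + \gamma(z-y)$. For any $w \in \partial U$, subadditivity gives $\gamma(x-w) \le \gamma(x-z)+\gamma(z-w)$, so
\[
\rho(x) = \gamma(x-y)+\varphi(y) \le \gamma(x-w)+\varphi(w) \le \gamma(x-z)+\gamma(z-w)+\varphi(w).
\]
Taking the infimum over $w \in \partial U$ and subtracting $\gamma(x-z)$ yields $\gamma(z-y)+\varphi(y) \le \rho(z)$. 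The reverse inequality is the definition of $\rho$, so $\rho(z)=\gamma(z-y)+\varphi(y)$, proving $y$ is a $\rho$-closest point to $z$. Linearity of $\rho$ on $[x,y]$ follows immediately from $\rho(z) = (1-t)\gamma(x-y)+\varphi(y) = (1-t)\rho(x)+t\,\rho(y)$, since $\rho(y)=\varphi(y)$.

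For (b), I argue by contradiction. Since $x \in U$, $y \in \partial U$, and $U$ is open, if $]x,y[\not\subset U$ then the point $z$ of $[x,y]$ nearest $x$ (in the segment parametrization) that fails to lie in $U$ must satisfy $z \in \partial U$, $z \ne x$, and (by minimality of the choice) $z \ne y$. By part (a), $\rho(z) = \gamma(z-y)+\varphi(y)$. But on $\partial U$ we have $\rho = \varphi$ (shown in the proof of Proposition \ref{prop: equiv}), whence $\varphi(z)-\varphi(y) = \gamma(z-y)$, contradicting the strict Lipschitz inequality (\ref{eq: phi strct Lip}).

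For (c), suppose $\tilde{y}\in\partial U$ is a $\rho$-closest point to some $z \in\, ]x,y[$ with $\tilde{y}\neq y$. The chain
\[
\rho(x)\le\gamma(x-\tilde{y})+\varphi(\tilde{y})\le\gamma(x-z)+\gamma(z-\tilde{y})+\varphi(\tilde{y})=\gamma(x-z)+\rho(z)=\gamma(x-y)+\varphi(y)=\rho(x),
\]
where the penultimate equality uses part (a), must hold with equality throughout. Hence $\tilde{y}$ is also a $\rho$-closest point to $x$, and $\gamma(x-\tilde{y})=\gamma(x-z)+\gamma(z-\tilde{y})$. Strict convexity of $\gamma$ together with $x\neq z$ and $z\neq\tilde{y}$ forces $z-\tilde{y}=c(x-z)$ for some $c>0$. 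Substituting $z=(1-t)x+ty$ gives $\tilde{y}=x+s(y-x)$ with $s=t(1+c)>0$. Three cases: if $s\in(0,1)$, then $\tilde{y}\in\,]x,y[\,\subset U$ by (b), contradicting $\tilde{y}\in\partial U$; if $s>1$, then $y\in\,]x,\tilde{y}[$, and applying part (a) to $x$ with $\tilde{y}$ as $\rho$-closest point shows $\tilde{y}$ is a $\rho$-closest point to $y$, but (\ref{eq: phi strct Lip}) forces $y$ itself to be the unique such point, so $\tilde{y}=y$, contradiction; the case $s=1$ is excluded by $\tilde{y}\ne y$. The main obstacle is this last part (c): promoting the equality case of the triangle inequality to a one-dimensional geometric constraint via strict convexity, and then ruling out the two sides of $y$ on that line by bootstrapping (a) and (b) together with the uniqueness of $y$ as its own $\rho$-closest boundary point.
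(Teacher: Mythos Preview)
Your proof is correct and follows essentially the paper's approach. Two small organizational differences are worth noting. For (b), the paper argues more directly: any $v\in\,]x,y[\cap\partial U$ would satisfy $\gamma(x-v)+\varphi(v)<\gamma(x-v)+\gamma(v-y)+\varphi(y)=\gamma(x-y)+\varphi(y)$ by (\ref{eq: phi strct Lip}), beating $y$ as a $\rho$-closest point to $x$; you instead invoke part (a) together with $\rho|_{\partial U}=\varphi$. For (c), the paper splits into collinear and non-collinear cases and handles the collinear case by two sub-cases, whereas you use the equality chain to force collinearity in one stroke and then parametrize $\tilde{y}=x+s(y-x)$; the subsequent exclusion of $s\in(0,1)$ via (b) and of $s>1$ via (a) plus (\ref{eq: phi strct Lip}) matches the paper's sub-case analysis. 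The content is the same.
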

\begin{proof}
(a) Let $z\in]x,y[$. Suppose to the contrary that there is $\tilde{y}\in\partial U-\{y\}$
such that 
\[
\gamma(z-\tilde{y})+\varphi(\tilde{y})<\gamma(z-y)+\varphi(y).
\]
Then we have 
\begin{align*}
\gamma(x-\tilde{y})+\varphi(\tilde{y}) & \le\gamma(x-z)+\gamma(z-\tilde{y})+\varphi(\tilde{y})\\
 & <\gamma(x-z)+\gamma(z-y)+\varphi(y)=\gamma(x-y)+\varphi(y),
\end{align*}
which is a contradiction. Hence $y$ is a $\rho$-closest point to
$z$.

Therefore the points in the segment $[x,y]$ have $y$ as a $\rho$-closest
point on $\partial U$. Hence for $0\le t\le\gamma(x-y)$ we have
\begin{align*}
\rho\big(x-\frac{t}{\gamma(x-y)}(x-y)\big) & =\gamma\big(x-\frac{t}{\gamma(x-y)}(x-y)-y\big)+\varphi(y)\\
 & =\big(1-\frac{t}{\gamma(x-y)}\big)\gamma(x-y)+\varphi(y)=\gamma(x-y)-t+\varphi(y).
\end{align*}
Thus $\rho$ varies linearly along the segment.

(b) Suppose to the contrary that there is $\tilde{z}\in]x,y[\cap\partial U$.
But then we have 
\[
\gamma(x-\tilde{z})+\varphi(\tilde{z})<\gamma(x-\tilde{z})+\gamma(\tilde{z}-y)+\varphi(y)=\gamma(x-y)+\varphi(y),
\]
which is a contradiction. 

(c) Suppose $z\in]x,y[$, and $\tilde{y}\in\partial U-\{y\}$ is another
$\rho$-closest point to $z$. Hence we have 
\[
\gamma(z-\tilde{y})+\varphi(\tilde{y})=\gamma(z-y)+\varphi(y).
\]
If $\tilde{y}$ belongs to the line containing $x,z,y$, then there
are two cases. If $\tilde{y}$ is on the same side of $x$ as $y$,
then $\tilde{y}$ cannot lie between $x,y$, since $]x,y[\subset U$.
Thus $y$ is between $\tilde{y},x$, and therefore it is also between
$\tilde{y},z$. But this is a contradiction since $]\tilde{y},z[\subset U$.
On the other hand if $\tilde{y},y$ are on different sides of $x$,
we have 
\[
\gamma(x-\tilde{y})+\varphi(\tilde{y})<\gamma(z-\tilde{y})+\varphi(\tilde{y})=\gamma(z-y)+\varphi(y)<\gamma(x-y)+\varphi(y),
\]
which is also a contradiction. Finally suppose that $x,z,\tilde{y}$
are not collinear. Then by strict convexity of $\gamma$ we get 
\begin{align*}
\gamma(x-\tilde{y})+\varphi(\tilde{y}) & <\gamma(x-z)+\gamma(z-\tilde{y})+\varphi(\tilde{y})\\
 & =\gamma(x-z)+\gamma(z-y)+\varphi(y)=\gamma(x-y)+\varphi(y),
\end{align*}
which is a contradiction too. Thus $y$ is the unique $\rho$-closest
point to $z$.
\end{proof}
\begin{lem}
\label{lem: rho not C1}Suppose $\gamma$ is strictly convex, and
the strict Lipschitz property (\ref{eq: phi strct Lip}) for $\varphi$
holds. If for some point $x\in U$ there are two different points
$y,z\in\partial U$ so that 
\[
\rho(x)=\gamma(x-y)+\varphi(y)=\gamma(x-z)+\varphi(z),
\]
then $\rho$ is not differentiable at $x$.
\end{lem}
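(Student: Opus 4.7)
The plan is to argue by contradiction, using the fact that $\rho$ is the pointwise minimum of the smooth family $f_w(\cdot) := \gamma(\cdot - w) + \varphi(w)$, $w \in \partial U$. If $\rho$ were differentiable at $x$, then since $\rho \le f_y$ everywhere with equality at $x$, the difference $f_y - \rho$ would attain its minimum $0$ at $x$. Because $x \in U$ and $y \in \partial U$ we have $x \ne y$, so $f_y$ is differentiable at $x$ with $Df_y(x) = D\gamma(x - y)$. Setting the gradient of $f_y - \rho$ equal to zero gives $D\rho(x) = D\gamma(x-y)$, and the same argument applied to $z$ yields $D\rho(x) = D\gamma(x-z)$. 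Hence we would have
\[
D\gamma(x-y) = D\gamma(x-z).
\]

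First I would show that this equality forces $y$, $z$ to lie on a common ray from $x$. Since $\gamma$ is strictly convex, by the discussion in Subsection \ref{subsec: Reg gaug} both $\gamma^\circ$ and $D\gamma^\circ$ are well defined on $\R^n-\{0\}$, and the identity \eqref{eq: Dg(Dg0)} gives $D\gamma^\circ(D\gamma(w)) = w/\gamma(w)$ for every $w\ne 0$. Applying $D\gamma^\circ$ to both sides of the displayed equation above therefore yields
\[
\frac{x-y}{\gamma(x-y)} \;=\; \frac{x-z}{\gamma(x-z)},
\]
so $x-y$ and $x-z$ are positive scalar multiples of one another, which means $y$ and $z$ lie on a common ray emanating from $x$.

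Finally, since $y \ne z$, one of them lies strictly between $x$ and the other; say WLOG $y \in \,]x,z[\,$ (the case $z \in \,]x,y[\,$ is identical). But Lemma \ref{lem: segment to the closest pt}(b), whose hypotheses are satisfied thanks to the strict Lipschitz property \eqref{eq: phi strct Lip} of $\varphi$, asserts that $]x,z[ \subset U$. This contradicts $y \in \partial U$, completing the argument.

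The only real subtlety is step two, where the strict convexity of $\gamma$ is used precisely so that $\gamma^\circ$ is $C^1$ on $\R^n-\{0\}$ and the inversion formula \eqref{eq: Dg(Dg0)} is available; without strict convexity, $D\gamma$ need not be injective on rays and the collinearity conclusion would fail. Once collinearity is in hand, the rest is an immediate appeal to the previous lemma.
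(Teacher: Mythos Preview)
Your overall strategy is right and ultimately reaches the same collinearity conclusion as the paper, but there is a real gap in the first step. You assert that $f_y$ is differentiable at $x$ with $Df_y(x)=D\gamma(x-y)$, and then invoke the inversion formula \eqref{eq: Dg(Dg0)}; both require $\gamma$ to be differentiable at $x-y$. The lemma, however, assumes only that $\gamma$ is \emph{strictly convex}. Strict convexity of $K$ forces $\partial K^\circ$, and hence $\gamma^\circ$, to be $C^1$ (which you correctly use), but it does \emph{not} force $\partial K$, and hence $\gamma$, to be $C^1$: a strictly convex body may still have corners. The derivation of \eqref{eq: Dg(Dg0)} in Subsection~\ref{subsec: Reg gaug} is explicitly carried out under the extra hypothesis that $\partial K$ is $C^{k,\alpha}$ with $k\ge 1$, so you cannot quote it here as written.

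The paper's proof sidesteps this entirely: it never differentiates $\gamma$. Instead it uses that $\rho$ is linear along each of the segments $[x,y]$ and $[x,z]$ to obtain $\big\langle D\rho(x),\frac{x-y}{\gamma(x-y)}\big\rangle=\big\langle D\rho(x),\frac{x-z}{\gamma(x-z)}\big\rangle=1$, shows $\gamma^\circ(D\rho(x))\le 1$ from the Lipschitz bound on $\rho$, and then applies strict convexity of $\gamma$ via a direct midpoint argument to force $\frac{x-y}{\gamma(x-y)}=\frac{x-z}{\gamma(x-z)}$. Your route can be repaired to work in the same generality by passing to subgradients: the minimum of $f_y-\rho$ at $x$ only gives $D\rho(x)\in\partial\gamma(x-y)$, but for any $p\in\partial\gamma(w)$ with $w\ne 0$ one has $w/\gamma(w)\in\partial\gamma^\circ(p)$, and since strict convexity of $\gamma$ makes $\gamma^\circ$ differentiable this still yields $D\gamma^\circ(D\rho(x))=\frac{x-y}{\gamma(x-y)}$; the rest of your argument then goes through unchanged.
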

\begin{proof}
We know that the points in the segment $[x,y]$ have $y$ as a $\rho$-closest
point on $\partial U$. Hence as we have seen in the proof of the
previous lemma, for $0\le t\le\gamma(x-y)$ we have 
\[
\rho\big(x-\frac{t}{\gamma(x-y)}(x-y)\big)=\gamma(x-y)-t+\varphi(y).
\]
Now suppose to the contrary that $\rho$ is differentiable at $x$.
Then by differentiating the above equality (and the similar formula
for $z$) with respect to $t$, we get 
\[
\big\langle D\rho(x),\frac{x-y}{\gamma(x-y)}\big\rangle=1=\big\langle D\rho(x),\frac{x-z}{\gamma(x-z)}\big\rangle.
\]
On the other hand, it is easy to show that $\gamma^{\circ}(D\rho(x))\le1$.
To do this just note that 
\[
\rho(x+tw)-\rho(x)\le\gamma(x+tw-x)=t\gamma(w).
\]
By taking the limit as $t\to0^{+}$, we get $\langle D\rho(x),w\rangle\le\gamma(w)$.
Then we get the desired result by (\ref{eq: gen Cauchy-Schwartz 2}).

Now note that if two vectors $w,\tilde{w}$ satisfy $\gamma(w)=1=\gamma(\tilde{w})$
and 
\[
\langle D\rho(x),w\rangle=1=\langle D\rho(x),\tilde{w}\rangle,
\]
then one of them is a positive multiple of the other, and consequently
the two vectors are equal.%
{} Since otherwise by strict convexity of $\gamma$ and inequality (\ref{eq: gen Cauchy-Schwartz}),
we get 
\[
\big\langle D\rho(x),\frac{w+\tilde{w}}{2}\big\rangle\leq\gamma^{\circ}(D\rho(x))\gamma\big(\frac{w+\tilde{w}}{2}\big)<\gamma^{\circ}(D\rho(x))\frac{\gamma(w)+\gamma(\tilde{w})}{2}=1.
\]
However we must have $\langle D\rho(x),(\frac{w+\tilde{w}}{2})\rangle=\frac{1+1}{2}=1$,
which is a contradiction.

Therefore we must have 
\[
\frac{x-y}{\gamma(x-y)}=\frac{x-z}{\gamma(x-z)}.
\]
This implies that $x,y,z$ are collinear, and $y,z$ are on the same
side of $x$. Then either $y$ is between $x,z$, or $z$ is between
$x,y$. But both of these cases are impossible since $]x,y[$ and
$]x,z[$ are subsets of $U$. Thus $\rho$ cannot be differentiable
at $x$.
\end{proof}
\begin{lem}
\label{lem: segment is plastic}Suppose $u\in C^{1}(U)$, and the
strict Lipschitz property (\ref{eq: phi strct Lip}) for $\varphi$
holds. If $x\in P^{+}$, and $y$ is a $\rho$-closest point on $\partial U$
to $x$, then $[x,y[\subset P^{+}$. Similarly, if $x\in P^{-}$,
and $y$ is a $\bar{\rho}$-closest point on $\partial U$ to $x$,
then $[x,y[\subset P^{-}$.%
\end{lem}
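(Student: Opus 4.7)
The plan is to use the $\gamma$-Lipschitz property of admissible functions (established inside the proof of Proposition \ref{prop: equiv}) together with the linear variation of $\rho$ along $[x,y]$ coming from Lemma \ref{lem: segment to the closest pt}. Since $u\in W_{K^{\circ},\varphi}\subset W_{\bar{\rho},\rho}$, we automatically have $-\bar{\rho}\le u\le\rho$, so $P^{+}\cap U$ is exactly the contact set for the upper obstacle; the goal is to propagate the equality $u=\rho$ at $x$ along the entire open segment $[x,y[$.

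First I would fix $z\in]x,y[$. By Lemma \ref{lem: segment to the closest pt}(b), the strict Lipschitz property of $\varphi$ gives $]x,y[\subset U$, so $z\in U$ and $[x,z]\subset U$. Part (a) of the same lemma shows that $y$ is also a $\rho$-closest point to $z$ on $\partial U$, hence $\rho(z)=\gamma(z-y)+\varphi(y)$. Moreover $x-z$ and $z-y$ are positive scalar multiples of $x-y$, so by positive $1$-homogeneity of $\gamma$
\[
\gamma(x-z)+\gamma(z-y)=\gamma(x-y).
\]

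Next I would invoke the $\gamma$-Lipschitz estimate for admissible functions: as shown in the proof of Proposition \ref{prop: equiv}, any $v\in W_{K^{\circ},\varphi}$ satisfies $v(a)-v(b)\le\gamma(a-b)$ whenever $[a,b[\subset U$. Applied to $u$ on $[z,x]$ this yields $u(x)-u(z)\le\gamma(x-z)$. Using $u(x)=\rho(x)=\gamma(x-y)+\varphi(y)$ together with the additivity displayed above,
\[
u(z)\;\ge\;u(x)-\gamma(x-z)\;=\;\gamma(x-y)+\varphi(y)-\gamma(x-z)\;=\;\gamma(z-y)+\varphi(y)\;=\;\rho(z).
\]
Combined with the global inequality $u(z)\le\rho(z)$ coming from $u\in W_{\bar{\rho},\rho}$, this forces $u(z)=\rho(z)$, so $z\in P^{+}$. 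Since $x\in P^{+}$ by assumption, we conclude $[x,y[\subset P^{+}$.

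The case $x\in P^{-}$ is entirely symmetric: applying the $\rho$-version of Lemma \ref{lem: segment to the closest pt} to $-\varphi$, $-K$, $\bar{\rho}$ gives $]x,y[\subset U$, the linear variation $-\bar{\rho}(z)=-\bar{\rho}(x)+\gamma(z-x)$ along the segment, and the corresponding additivity $\gamma(y-z)+\gamma(z-x)=\gamma(y-x)$. Then the Lipschitz bound $u(z)-u(x)\le\gamma(z-x)$ (same source, applied on $[x,z]\subset U$) combined with $u(x)=-\bar{\rho}(x)$ gives $u(z)\le-\bar{\rho}(z)$, which together with $u\ge-\bar{\rho}$ forces $z\in P^{-}$. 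There is no real obstacle here: the whole statement is a rigidity consequence of two one-sided $\gamma$-Lipschitz bounds meeting along the characteristic segment, and the hypothesis $u\in C^{1}(U)$ is not actually used beyond ensuring that $u$ is pointwise defined, since the admissibility $u\in W_{K^{\circ},\varphi}$ already supplies the $\gamma$-Lipschitz property.
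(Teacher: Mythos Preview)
Your argument is correct. It is essentially the integrated version of the paper's proof: the paper works differentially, setting $v=u-(-\bar{\rho})$ (for the $P^{-}$ case), using the $C^{1}$ hypothesis to compute $D_{\xi}u\le\gamma^{\circ}(Du)\gamma(\xi)\le1$ along the segment while $D_{\xi}(-\bar{\rho})=1$, and concluding from $D_{\xi}v\le0$ together with $v(x)=v(y)=0$ that $v\equiv0$ on $[x,y]$. You replace this derivative comparison by the finite-increment inequality $u(a)-u(b)\le\gamma(a-b)$ taken from the proof of Proposition~\ref{prop: equiv}, and combine it with the exact additivity $\gamma(x-z)+\gamma(z-y)=\gamma(x-y)$ along the segment. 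The two approaches are the same sandwich argument; yours has the minor advantage that it does not actually invoke $u\in C^{1}(U)$, only $u\in W_{K^{\circ},\varphi}$ (hence Lipschitz), as you observe at the end.
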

\begin{proof}
Note that $[x,y[\subset U$. Suppose $x\in P^{-}$; the other case
is similar. We have 
\[
u(x)=-\bar{\rho}(x)=-\gamma(y-x)+\varphi(y).
\]
Let $v:=u-(-\bar{\rho})\ge0$, and $\xi:=\frac{y-x}{\gamma(y-x)}=-\frac{x-y}{\bar{\gamma}(x-y)}$.
Then $\bar{\rho}$ varies linearly along the segment $]x,y[$, since
$y$ is a $\bar{\rho}$-closest point to the points of the segment.
So we have $D_{\xi}(-\bar{\rho})=D_{-\xi}\bar{\rho}=1$ along the
segment, as shown in the proof of Lemma \ref{lem: rho not C1}. Note
that we do not assume the differentiability of $\bar{\rho}$; and
$D_{-\xi}\bar{\rho}$ is just the derivative of the restriction of
$\bar{\rho}$ to the segment $]x,y[$. Now since 
\[
D_{\xi}u=\langle Du,\xi\rangle\le\gamma^{\circ}(Du)\gamma(\xi)\le1,
\]
we have $D_{\xi}v\le0$ along $]x,y[$. Thus as $v(x)=v(y)=0$, and
$v$ is continuous on the closed segment $[x,y]$, we must have $v\equiv0$
on $[x,y]$. Therefore $u=-\bar{\rho}$ along the segment as desired.
\end{proof}
Now recall that we have $Du\in K^{\circ}$, which is equivalent to
$\gamma^{\circ}(Du)\le1$. The next lemma tells us when we hit the
gradient constraint, i.e. when $Du\in\partial K^{\circ}$, or equivalently
when $\gamma^{\circ}(Du)=1$. The answer is that we hit the gradient
constraint exactly when we hit one of the obstacles $-\bar{\rho},\rho$.
\begin{lem}
\label{lem: E , P}Suppose the Assumption \ref{assu: 1} holds, and
$u\in C_{\mathrm{loc}}^{1,1}(U)$. Also suppose that the strict Lipschitz
property (\ref{eq: phi strct Lip}) for $\varphi$ holds, and $\gamma$
is strictly convex. Then we have 
\begin{align*}
 & P=\{x\in U:\gamma^{\circ}(Du(x))=1\},\\
 & E=\{x\in U:\gamma^{\circ}(Du(x))<1\}.
\end{align*}
\end{lem}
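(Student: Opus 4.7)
The plan is to prove both inclusions by working with the decomposition $U = E \sqcup P$ and the bound $\gamma^\circ(Du) \leq 1$ (which holds since $Du \in K^\circ$). For the easy inclusion $P \subseteq \{\gamma^\circ(Du) = 1\}$, take $x \in P^+$. Lemma \ref{lem: segment is plastic} gives a $\rho$-closest $y \in \partial U$ with $u \equiv \rho$ on $[x, y[$, and Lemma \ref{lem: segment to the closest pt}(a) shows $\rho$ is linear along this segment, namely $\rho(x - t(x-y)/\gamma(x-y)) = \rho(x) - t$. Differentiating at $t = 0$ yields $\langle Du(x),(x-y)/\gamma(x-y)\rangle = 1$, and since $\xi := (x-y)/\gamma(x-y)$ satisfies $\gamma(\xi) = 1$, identity (\ref{eq: gen Cauchy-Schwartz 2}) forces $\gamma^\circ(Du(x)) \geq 1$, hence equality. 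The case $x \in P^-$ is analogous with $\bar\rho$.

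For the reverse inclusion, suppose by contradiction $x_0 \in E$ with $\gamma^\circ(Du(x_0)) = 1$. Strict convexity of $K$ makes $\gamma^\circ$ of class $C^1$ on $\R^n \setminus \{0\}$, so $\xi_0 := D\gamma^\circ(Du(x_0))$ satisfies $\gamma(\xi_0) = 1$ and $\langle Du(x_0), \xi_0\rangle = 1$ by (\ref{eq: g0 (Dg)=00003D1}) and (\ref{eq: Euler formula}); moreover $\xi_0$ is the \emph{unique} such direction, because strict convexity of $K$ makes the support function of $K$ attain its maximum at a single exposed point. Set $w := D_{\xi_0}u$; by (\ref{eq: gen Cauchy-Schwartz}) we have $w \leq 1$ on all of $U$, with equality at $x_0$. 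On the open set $E$ the obstacle constraints are locally inactive, so $u$ satisfies the Euler--Lagrange equation $-D_i(D_iF(Du)) + g'(u) = 0$; combined with $u \in C^{1,1}_{\mathrm{loc}}$, Schauder theory upgrades this to $u \in C^{2,\bar\alpha}_{\mathrm{loc}}(E)$. Differentiating the equation in direction $\xi_0$ yields the uniformly elliptic divergence-form linear equation
\[
-D_i\bigl(D_{ij}^2F(Du)\,D_jw\bigr) + g''(u)\,w \;=\; 0 \qquad\text{on } E,
\]
whose zero-th order coefficient $g''(u)$ is nonnegative. The strong maximum principle (e.g.\ Theorem 8.19 of \citep{MR1814364}) then forces $w \equiv 1$ on the connected component $E_0$ of $E$ containing $x_0$.

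Finally, follow the ray $t \mapsto x_0 + t\xi_0$: since $D_{\xi_0}u \equiv 1$ on $E_0$, we have $u(x_0 + t\xi_0) = u(x_0) + t$ until the ray exits $E_0$ at some point $x_1 = x_0 + t^*\xi_0 \in \partial E_0$ with $t^* < \infty$ (as $U$ is bounded). If $x_1 \in \partial U$, then $u(x_1) = \varphi(x_1)$ and the definition of $\bar\rho$ gives $\bar\rho(x_0) \leq \gamma(x_1 - x_0) - \varphi(x_1) = t^* - u(x_1) = -u(x_0)$, so $-\bar\rho(x_0) \geq u(x_0)$ and hence $x_0 \in P^-$, contradicting $x_0 \in E$. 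If $x_1 \in P^-$, the Lipschitz bound $\bar\rho(x_0) \leq \bar\rho(x_1) + \bar\gamma(x_0 - x_1) = \bar\rho(x_1) + t^*$ forces the same conclusion. The main obstacle is the remaining case $x_1 \in P^+$, where no direct comparison works: Lemma \ref{lem: segment is plastic} gives a $\rho$-closest $y_1$ with $[x_1, y_1[ \subset P^+$, and the Part~1 argument shows $\xi_1 := (x_1 - y_1)/\gamma(x_1 - y_1)$ is an attainer of $\gamma^\circ(Du(x_1)) = 1$. But $w(x_1) = 1$ by continuity also makes $\xi_0$ an attainer, so the uniqueness noted above forces $\xi_1 = \xi_0$, i.e.\ $y_1 = x_1 - r\xi_0$ for some $r > 0$. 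Then for all sufficiently small $s > 0$ the point $x_1 - s\xi_0 = x_0 + (t^* - s)\xi_0$ lies simultaneously in $\,]x_1, y_1[\, \subset P^+$ and in $E_0$ (the trajectory re-entering $E_0$ backward from $x_1$), contradicting $E_0 \subset E$ and completing the proof. The key device that unlocks this final case is the uniqueness of the attainer of $\gamma^\circ$, which is afforded precisely by the strict convexity of $K$.
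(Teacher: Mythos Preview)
Your proof is correct and follows essentially the same strategy as the paper's: show $P\subset\{\gamma^{\circ}(Du)=1\}$ via the segment-to-closest-point lemma, then argue by contradiction on $E$ using the strong maximum principle for $D_{\xi}u$ and a ray-following argument to $\partial E_0$. The only cosmetic difference is that you travel along the ray in the $+\xi_0$ direction (hitting the easy cases $\partial U$ and $P^-$ first, with $P^+$ as the tricky case requiring uniqueness of the attainer), whereas the paper travels in the $-\tilde\xi$ direction (so $\partial U$ and $P^+$ are easy and $P^-$ is the tricky case); the two are mirror images of each other.
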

\begin{proof}
First suppose $x\in P^{-}$; the case of $P^{+}$ is similar. Then
we have 
\[
u(x)=-\bar{\rho}(x)=-\gamma(y-x)+\varphi(y),
\]
for some $y\in\partial U$. Thus by Lemma \ref{lem: segment is plastic},
$u=-\bar{\rho}$ along the segment $[x,y[$. We also know that $\bar{\rho}$
varies linearly along the segment $[x,y[$, since $y$ is a $\bar{\rho}$-closest
point to the points of the segment. Hence we have $D_{\xi}u(x)=1$
for $\xi:=\frac{y-x}{\gamma(y-x)}$, as shown in the proof of Lemma
\ref{lem: rho not C1}. Therefore $\gamma^{\circ}(Du(x))$ cannot
be less than $1$ due to the equation (\ref{eq: gen Cauchy-Schwartz 2}).

Next, assume that $\gamma^{\circ}(Du(x))=1$. Then by (\ref{eq: gen Cauchy-Schwartz 2}),
there is $\tilde{\xi}$ with $\gamma(\tilde{\xi})=1$ such that $D_{\tilde{\xi}}u(x)=1$.
Suppose to the contrary that $x\in E$, i.e. $-\bar{\rho}(x)<u(x)<\rho(x)$.
By (\ref{eq: diff ineq}) we know $D_{\tilde{\xi}}u$ is a weak solution
to the elliptic equation 
\[
-D_{i}(a_{ij}D_{j}D_{\tilde{\xi}}u)+bD_{\tilde{\xi}}u=0\qquad\textrm{ in }E,
\]
where $a_{ij}:=D_{ij}^{2}F(Du)$, and $b:=g''(u)$. On the other hand
\[
D_{\tilde{\xi}}u=\langle Du,\tilde{\xi}\rangle\le\gamma^{\circ}(Du)\gamma(\tilde{\xi})\le1
\]
on $U$. Let $E_{1}$ be the connected component of $E$ that contains
$x$. Then the strong maximum principle (Theorem 8.19 of \citep{MR1814364})
implies that $D_{\tilde{\xi}}u\equiv1$ over $E_{1}$. Note that we
can work in open subsets of $E_{1}$ which are compactly contained
in $E_{1}$; so we do not need the global integrability of $D^{2}u$
to apply the maximum principle.

Now consider the line passing through $x$ in the $\tilde{\xi}$ direction,
and suppose it intersects $\partial E_{1}$ for the first time in
$y:=x-\tau\tilde{\xi}$ for some $\tau>0$. If $y\in\partial U$,
then for $t>0$ we have 
\[
\frac{d}{dt}[u(y+t\tilde{\xi})]=D_{\tilde{\xi}}u(y+t\tilde{\xi})=1=\frac{d}{dt}[t\gamma(\tilde{\xi})]=\frac{d}{dt}[\gamma(y+t\tilde{\xi}-y)].
\]
Thus as $u(y)=\varphi(y)$, we get $u(x)=u(y+\tau\tilde{\xi})=\gamma(x-y)+\varphi(y)\ge\rho(x)$;
which is a contradiction. Now if $y\in U$, then as it also belongs
to $\partial E$ we have $y\in P$. If $u(y)=\rho(y)=\gamma(y-\tilde{y})+\varphi(\tilde{y})$
for some $\tilde{y}\in\partial U$, similarly to the above we obtain
\begin{align*}
u(x) & =\gamma(x-y)+u(y)\\
 & =\gamma(x-y)+\gamma(y-\tilde{y})+\varphi(\tilde{y})\ge\gamma(x-\tilde{y})+\varphi(\tilde{y})\ge\rho(x),
\end{align*}
which is again a contradiction.

On the other hand, if $u(y)=-\bar{\rho}(y)=-\gamma(\tilde{y}-y)+\varphi(\tilde{y})$
for some $\tilde{y}\in\partial U$, then by Lemma \ref{lem: segment is plastic}
we have $u=-\bar{\rho}$ on the segment $[y,\tilde{y}[$; and consequently
$D_{\hat{\xi}}u(y)=1$, where $\hat{\xi}:=\frac{\tilde{y}-y}{\gamma(\tilde{y}-y)}$.
Since $u$ is differentiable we must have $\tilde{\xi}=\hat{\xi}$,
as shown in the proof of Lemma \ref{lem: rho not C1}. Therefore $x,y,\tilde{y}$
are collinear, and $x,\tilde{y}$ are on the same side of $y$. But
$\tilde{y}$ cannot belong to $]y,x[\subset E_{1}\subset E\subset U$.
Hence we must have $x\in]y,\tilde{y}[\subset P^{-}$, which means
$u(x)=-\bar{\rho}(x)$; and this is a contradiction.
\end{proof}
\begin{rem*}
In the above proof, we only used the strict convexity of $\gamma$
in the last paragraph. So without this assumption we have 
\[
P\subset\{x\in U:\gamma^{\circ}(Du(x))=1\},\qquad E\supset\{x\in U:\gamma^{\circ}(Du(x))<1\}.
\]
Furthermore, if we can drop one of the obstacles, then we do not need
the argument given in the last paragraph, and we can conclude that
the above lemma holds without assuming the strict convexity of $\gamma$.
(Note that if we only have the obstacle $-\bar{\rho}$, then in the
above proof we have to look for a point of the form $x+\tau\tilde{\xi}\in\partial E_{1}$
for some $\tau>0$.) For example, when $g$ is decreasing, we can
show that $u\ge0$ (since $J[u^{+}]\le J[u]$). Thus if in addition
$\varphi=0$, then $u$ does not touch the lower obstacle, since in
this case we have $-\bar{\rho}<0$.
\end{rem*}
\begin{prop}
\label{prop: ridge0 elastic}Suppose the Assumption \ref{assu: 1}
holds, and $u\in C^{1}(U)$. Also suppose that the strict Lipschitz
property (\ref{eq: phi strct Lip}) for $\varphi$ holds, and $\gamma$
is strictly convex. Then we have 
\begin{eqnarray*}
R_{\rho,0}\cap P^{+}=\emptyset, & \hspace{2cm} & R_{\bar{\rho},0}\cap P^{-}=\emptyset.
\end{eqnarray*}
\end{prop}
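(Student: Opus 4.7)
The plan is to argue by contradiction, essentially transporting the argument of Lemma \ref{lem: rho not C1} from $\rho$ to $u$, using the fact that on the plastic region $u$ inherits the relevant directional derivatives of $\rho$. Only the first statement needs to be proved, since the second follows by replacing $K,\varphi$ with $-K,-\varphi$.

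Suppose, for contradiction, that there is a point $x\in R_{\rho,0}\cap P^{+}$. Then $u(x)=\rho(x)$, and by definition of $R_{\rho,0}$ there are two distinct points $y,z\in\partial U$ with
\[
\rho(x)=\gamma(x-y)+\varphi(y)=\gamma(x-z)+\varphi(z).
\]
By Lemma \ref{lem: segment to the closest pt}(a), $\rho$ varies linearly along $[x,y]$ and $[x,z]$, and by Lemma \ref{lem: segment is plastic} we have $[x,y[\cup[x,z[\subset P^{+}$, so $u=\rho$ on both segments. Taking derivatives along these segments, as in the proof of Lemma \ref{lem: rho not C1}, I obtain
\[
\Big\langle Du(x),\tfrac{x-y}{\gamma(x-y)}\Big\rangle=1=\Big\langle Du(x),\tfrac{x-z}{\gamma(x-z)}\Big\rangle.
\]
Note that this step is legitimate because $u$ is (globally) $C^{1}$ at $x$, even though $\rho$ need not be.

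Now I mimic the second half of the proof of Lemma \ref{lem: rho not C1}, but with $u$ in place of $\rho$. We have $Du(x)\in K^{\circ}$, i.e.\ $\gamma^{\circ}(Du(x))\le1$. Setting $v:=\frac{x-y}{\gamma(x-y)}$ and $w:=\frac{x-z}{\gamma(x-z)}$, both satisfy $\gamma(v)=\gamma(w)=1$ and $\langle Du(x),v\rangle=\langle Du(x),w\rangle=1$. If $v\ne w$, then by strict convexity of $\gamma$ and (\ref{eq: gen Cauchy-Schwartz}) we would get
\[
1=\Big\langle Du(x),\tfrac{v+w}{2}\Big\rangle\le\gamma^{\circ}(Du(x))\,\gamma\!\big(\tfrac{v+w}{2}\big)<\gamma^{\circ}(Du(x))\,\tfrac{\gamma(v)+\gamma(w)}{2}\le1,
\]
a contradiction. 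Hence $v=w$, which forces $x,y,z$ to be collinear with $y,z$ lying on the same ray emanating from $x$.

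The final step is the same geometric observation used in Lemma \ref{lem: rho not C1}: either $y\in{]}x,z[$ or $z\in{]}x,y[$, but Lemma \ref{lem: segment to the closest pt}(b) gives ${]}x,y[\cup{]}x,z[\subset U$, which is incompatible with $y,z\in\partial U$ and $y\ne z$. This contradiction proves $R_{\rho,0}\cap P^{+}=\emptyset$. The only real subtlety is checking that the key directional-derivative identity transfers from $\rho$ (which is non-differentiable at $x$) to $u$ (which is $C^{1,1}$), and this is exactly what Lemma \ref{lem: segment is plastic} provides.
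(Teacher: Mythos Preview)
Your proof is correct and follows essentially the same approach as the paper's: both argue by contradiction, invoke Lemma \ref{lem: segment is plastic} to put the two segments inside the plastic set, transfer the directional-derivative identity from the obstacle to $u$ (using that $u$ is $C^{1}$), and then finish with the strict-convexity argument of Lemma \ref{lem: rho not C1}. The only cosmetic differences are that the paper treats the $P^{-}$ case explicitly while you treat $P^{+}$, and you spell out the contradiction in full whereas the paper simply refers back to the proof of Lemma \ref{lem: rho not C1}.
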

\begin{proof}
Let us show that $R_{\bar{\rho},0}\cap P^{-}=\emptyset$; the other
case is similar. Suppose to the contrary that $x\in R_{\bar{\rho},0}\cap P^{-}$.
Then there are at least two distinct points $y,z\in\partial U$ such
that 
\[
\bar{\rho}(x)=\gamma(y-x)-\varphi(y)=\gamma(z-x)-\varphi(z).
\]
Now by Lemma \ref{lem: segment is plastic}, we have $[x,y[,[x,z[\subset P^{-}$.
In other words, $u=-\bar{\rho}$ on both of these segments. Therefore,
we can argue as in the proof of Lemma \ref{lem: rho not C1} to obtain
\[
\big\langle Du(x),\frac{y-x}{\gamma(y-x)}\big\rangle=1=\big\langle Du(x),\frac{z-x}{\gamma(z-x)}\big\rangle;
\]
and to get a contradiction with the fact that $\gamma^{\circ}(Du(x))\le1$.
\end{proof}

\bibliographystyle{plainnat}
\bibliography{/Volumes/A/Dropbox/Bibliography-Jan-2021}

\end{document}